\theoremstyle{plain}
\newtheorem{theorem}{Theorem}
\newtheorem{lemma}{Lemma}  
\newtheorem{definition}{Definition}
\theoremstyle{remark}
\newtheorem*{remark}{Remark}
\theoremstyle{definition}
\newtheorem{example}{Example}
\def\cal{\mathcal}
\def\metric#1{\langle #1 \rangle}
\def\pairing#1{\langle #1 \rangle}
\def\norm#1{|#1|}
\def\metrictwo#1{\langle\!\langle #1 \rangle\!\rangle}
\def\normtwo#1{\| #1 \|}
\def\R{\mathbb{R}}
\def\RP{\mathbf{RP}}
\def\C{\mathbb{C}}
\def\CP{\mathbf{CP}}
\def\S{\mathbf{S}}
\def\l{\lambda}
\def\a{\alpha}
\def\b{\beta}
\def\g{\gamma}
\def\k{\kappa}
\def\s{\sigma}
\def\t{\theta}
\def\r{\rho}
\def\z{\zeta}
\def\bP{\mathbb{P}}
\def\cV{{\mathcal V}}
\def\cW{{\mathcal W}}
\def\cC{{\mathcal C}}
\def\cS{{\mathcal S}}
\def\tensor{\otimes}
\def\into{\rightarrow}
\def\intersection{\cap}
\def\dim{{\rm dim}\,}
\def\conj#1{\overline{#1}}
\DeclareMathOperator{\diag}{diag}
\DeclareMathOperator{\re}{re}
\DeclareMathOperator{\im}{im}
\DeclareMathOperator{\id}{id}
\DeclareMathOperator{\deter}{det}
\def\smallskip{\par\vspace{1mm}}
\def\medskip{\par\vspace{2mm}}
\def\bigskip{\par\vspace{3mm}}
\def\fr#1#2{\frac{#1}{#2}}
\def\smfr#1#2{\tfrac{#1}{#2}}
\def\m#1{\begin{bmatrix}#1\end{bmatrix}}
\def\thenumber{0}
\def\eq#1{\global\advance\equationcount by 1
   \def\thenumber{\number\equationcount}
                        {$$#1\eqno(\thenumber)$$}}
\def\interl{\mathop{\vrule width6pt height0.4pt depth0pt
\vrule depth0pt height6pt}}
\def\bP{\mathbb{P}}
\def\CP{\mathbb{CP}}
\newcommand{\dd}[2]
{
{{\partial #1}   \over {\partial #2}}
}
\begin{document}

\title[Planar Three-Body Problem]{Symmetric Regularization, Reduction and Blow-Up of the Planar Three-Body Problem}
\author{Richard Moeckel}
\author{Richard Montgomery}
\address{School of Mathematics\\ University of Minnesota\\ Minneapolis MN 55455}

\email{rick@math.umn.edu}

\address{Dept. of Mathematics\\ University of California, Santa Cruz\\ Santa Cruz CA}

\email{rmont@count.ucsc.edu}

\date{February 3, 2012 (Preliminary Version)}

\keywords{Celestial mechanics, three-body problem, }

\subjclass[2000]{70F10, 70F15, 37N05, 70G40, 70G60}

\thanks{}

\begin{abstract}
We carry out a sequence of coordinate changes for the planar three-body problem which successively eliminate the translation and rotation symmetries, regularize all three double collision singularities and blow-up the triple collision.  Parametrizing the configurations by the three relative position vectors maintains the symmetry among the masses and simplifies the regularization of binary collisions.   Using size and shape coordinates facilitates the reduction by rotations and the blow-up of triple collision while emphasizing the role of the shape sphere.   
By using homogeneous coordinates to describe Hamiltonian systems whose configurations spaces are spheres or projective spaces, we are able to take a modern, global approach to these familiar problems.  We also show how to obtain the reduced and regularized differential equations in several convenient local coordinates systems. 
\end{abstract}

\maketitle

\section{Introduction and History} 

The three-body problem of Newton 
has  symmetries and singularities. 
The  reduction process  eliminates  symmetries  
thereby reducing the number of degrees of freedom.
The Levi-Civita regularization  eliminates binary collision singularities
by a non-invertible coordinate change together with a time reparametrization.
The McGehee blow-up  eliminates  the triple collision singularity
by an  ingenious polar coordinate change and another time reparametrization.
Each  process has been applied individually and in various combinations
to the three-body problem, many times.

In this  paper we apply all three processes globally and systematically,
with no one body singled out in the various transformations.  
The end result is a complete flow on a five-dimensional manifold with boundary.   
We focus  attention on the geometry of the various spaces and  maps appearing along the way.
At the heart of this paper is a beautiful degree-4 octahedral covering map of the shape sphere, branched over 
the binary collision points (see figure~\ref{fig_lemaitre}).   This map first appears in the work of Lemaitre \cite{Lemaitre, Lemaitre2}.
One of our goals is to give a modern, geometrical approach to this regularizing map.

The reduction procedure for the three body problem dates back  to Lagrange  \cite{Lagrange}    
who found elegant differential equations for 10 translation and rotation invariant variables,  including the 
 the squares of the lengths of the three sides of the triangle formed by the bodies.   These equations are valid for the three-body problem in any
 dimension.   The variables of Lagrange also have the advantage of maintaining the symmetry among the masses.  
 On the other hand,  for the planar problem they are subject to 3 nonlinear constraints in addition to the 
 energy and angular momentum integrals.  Moreover, we do not know a way to regularize the
 binary collision singularities in Lagrange's equations.  For a modern introduction to Lagrange's equations see
 \cite{Albouymutual, Albouy-Chenciner, Chenciner}.

Jacobi  eliminates the translation symmetry by the familiar device of fixing the center of mass at the origin and introducing
Jacobi coordinates \cite{Jacobi}.  The elimination of rotations is achieved by introducing some angular variable (or variables in the spatial case) to describe the overall rotation of the triangle together with some complementary, rotation-invariant variables.  This method, which is the basis for much of the later work on the three-body problem,  has some disadvantages.  First, the Jacobi coordinates break the symmetry among the masses, making it much more difficult to regularize all three binary collisions at once.  Second, for topological reasons, there is no way to choose an angular variable suitable for a global reduction which includes the binary collision configurations,  Namely, the map from the normalized configuration space to the shape sphere is a Hopf fibration, a nontrivial circle bundle.  If we delete the binary collision points, the bundle becomes trivial but this deletion is not conducive to subsequent regularization.

Murnaghan \cite{Murnaghan}  derived a symmetrical Hamiltonian for the planar three-body problem in terms of the lengths of the sides and an angular variable representing the overall rotation of the triangle with respect to an inertial coordinate system.  Then he obtains a reduced system by ignoring the angular variable.  Van Kampen and  Wintner carry out a similar reduction for the spatial three-body problem \cite{VanKampenWintner}.  While these reductions avoid breaking the symmetry, they are still subject to the problem about the use of angular variables in a nontrivial bundle. In addition, using the side lengths as variables leads to  differential equations which are not smooth at the collinear configurations (a problem seemingly avoided somehow by Lagrange).

Lemaitre \cite{Lemaitre} introduced a symmetrical approach to reduction and regularization of binary collisions leading to the octahedral branched covering map of the sphere mentioned above.  After using Euler angles to reduce by rotations, he introduces a size variable and two shape variables which can be viewed as spherical coordinates on the shape sphere which we use below.  The regularization of binary collisions is done in the shape variables by means of the octahedral covering map.  The use of Euler angles limits the validity of the reduction step of Lemaitre's work and the derivations are based on rather heavy trigonometric computations.    But much of this paper can be viewed as a modern, global way to arrive at his covering map.

In this endeavor we  have the advantage of the modern theory of reduction of Hamiltonian systems with symmetry.  Smale describes the reduction process for the three-body problem as the formation of a quotient manifold with a reduced Hamiltonian flow \cite{Smale}.
Meyer \cite{Meyer} and   Marsden-Weinstein \cite{Marsden-Weinstein} formalized the reduction 
procedure into what  is now called   ``symplectic reduction theory".  Fixing the integrals of motion determines invariant manifolds in phase space.  The quotient spaces of these invariant manifolds are the reduced phase spaces and the flows induced on them are again Hamiltonian with respect to an appropriate symplectic structure and a reduced Hamiltonian function.  

The regularization procedure goes back to Levi-Civita \cite{LeviCivita}
who showed how to regularize binary collisions in perturbed planar Kepler problems by using the complex squaring map (a branched double covering of the complex plane).    It is easy to adapt his method to regularize one of the binary collisions in the three-body problem, but regularizing all three requires more ingenuity.   Lemaitre's regularizing map behaves like the complex squaring map at each of the binary collision points on the shape sphere.  Another approach to simultaneous regularization (without reduction) was introduced by Waldvogel \cite{Waldvogelreg} who used a quadratic mapping of the translation-reduced configuration space $\C^2$.  We use a similar quadratic mapping
applied to certain homogeneous shape variables below.   Heggie \cite{Heggie} found an elegant,  symmetrical way to
regularize all of the binary collisions for the $N$-body problem.  In the planar case, his method is to apply separate Levi-Civita transformations to each of  the difference vectors $q_i -q_j$.   We apply this same idea below, but to the homogeneous shape variables, where it is found to induce Lemaitre's octahedral covering.

Triple collision acts like an essential singularity in the three-body problem.
McGehee in 1974 \cite{McGehee}  showed how  an extension of spherical coordinates,   together
with a time reparametrization yields a flow with no singularities at triple collision. 
This ``McGehee blow-up'' has  the effect of replacing the triple collision point  by  a manifold called the collision manifold.
Relative to the new parameterization, it  takes forever to reach triple collision, whereas
the  Newtonian time to triple collision is finite.  The  flow on the triple collision manifold 
 governs  the behavior of  near-triple collision solutions.    
One aspect of the blow-up procedure is the use of separate size and shape coordinates to describe the configuration of the bodies.  As shown below, such a splitting also facilitates the global reduction by rotations.    

Several authors have combined blow-up of triple collision with reduction and/or regularization of binary collision.  Waldvogel reduced and regularized the flow on the zero-angular-momentum triple collision manifold \cite{Waldvogeltriple}.  The first part of his paper combines Murnaghan's reduction procedure with some formulas of Lemaitre to obtain a reduced and regularized Hamiltonian for the zero-angular momentum three-body problem.  Binary collisions are not regularized on the nonzero angular momentum levels.  However,  it is known that triple collisions can only occur when the angular momentum is zero.  After restricting to the zero angular momentum manifold,  Waldvogel blows up the triple collision to get reduced, regularized and blown-up differential equations.   Simo and Susin used these coordinates in their study of the dynamics on the collision manifold \cite{SimoSusin}.     These coordinates are very much in the spirit of this paper but do not achieve a full reduction, regularization and blow-up due to the restriction to zero angular momentum.

The present paper draws on all these sources.  We begin with some symplectic reduction theory.  Turning to the three-body problem we eliminate translation symmetry by introducing the three difference vectors $Q_{ij}= q_i-q_j$ as coordinates.  Since these are linearly dependent, some effort is needed to justify the change of coordinates.  Next we introduce a size variable and an associated spherical coordinates $X_{ij}$.  One novelty of our approach is that we use homogeneous coordinates to describe points on spheres.  Instead of constraining the spherical coordinates to have a fixed norm, we only ask them to avoid the origin and then we find differential equations for them which are invariant under scaling.  

Once this point of view is adopted, it is relatively easy to carry out a global reduction by rotations.  Using complex coordinates, the combined action of scaling and rotation is just scaling by a complex number.  Quotienting by complex scaling we end up with a complex projective space, in fact with $\CP^1$.  Of course as  real manifolds $\CP^1\simeq\S^2$ and this is our version of the shape sphere.  We finally obtain a global reduction of the planar three-body problem with a six-dimensional reduced phase space, the cotangent bundle of $\R^+\times \S^2$.

Turning to regularization, we use simultaneous Levi-Civita transformations of the homogeneous variables $X_{ij}$ to regularize all three binary collisions.  This regularizing map is applied to both the rotation-reduced and unreduced problems.  In the reduced case we get a reduced and regularized system on the cotangent bundle of $\R^+\times \S^2$ which is related to the unregularized version by Lemaitre's map.

Finally we show how McGehee's blow-up procedure can be applied to the various Hamiltonians we have found.

 \section{Symplectic Reduction}\label{sec_symplecticreduction}
In this section we recall some results about the reduction of a Hamiltonian system with symmetry.  In addition we show how to tell when two symmetric Hamiltonian systems lead to equivalent reduced systems. 

First we describe the basic symplectic reduction theory  of Meyer \cite{Meyer} and Marsden-Weinstein \cite{Marsden-Weinstein} in the case of a system with symmetry.  Suppose $(M,\omega)$ is a symplectic manifold and $G$ is a Lie group which acts on $M$ as a group of symplectic diffeomorphisms.  Let $J:M\into \mathfrak{g}^*$ be the momentum map, where $\mathfrak{g}^*$ is the dual of the Lie algebra of $G$.  If we fix a momentum value $\mu\in\mathfrak{g}^*$ and suppose that the action of $G$ maps the level set $J^{-1}(\mu)$ into itself.
The quotient space
$$P_\mu = J^{-1}(\mu)/G$$ is called the {\em reduced phase space}.

If the group action is free and proper, then this space is a smooth manifold.  There is an induced symplectic form 
$\omega_\mu$ on $P_\mu$ which is obtained as follows.  First, for $x\in M$, restrict $\omega(x)$ to the tangent spaces $T_x J^{-1}(\mu)$.  The resulting two-form has a kernel which is precisely the tangent space to the group orbit through $x$.  This implies that there is an induced two-form on the quotient vector space which is the tangent space to the quotient manifold. 

Now if $H:M\into \R$ is a $G$-invariant Hamiltonian then the corresponding Hamiltonian flow has $J^{-1}(\mu)$ as an invariant set and $G$-orbits map to $G$-orbits under the flow.  Hence there is a well-defined quotient flow on $J^{-1}(\mu)/G$.   There is also a reduced Hamiltonian $H_\mu:P_\mu\into \R$ and the reduction theorem states that the quotient flow on $(P_\mu,\omega_\mu)$ is the Hamiltonian flow of the reduced Hamiltonian.

Now suppose we have two such Hamiltonian systems with symmetry.  For $i = 1,2$, there will be symplectic manifolds $(M_i,\omega_i)$, symmetry groups $G_i$ and momentum maps $J_i$.  If we fix momentum values $\mu_i$ we get reduced reduced phase spaces $P_i = J_i^{-1}(\mu_i)/G_i$ with symplectic forms $\omega_{\mu_i}$.   Suppose $H_i:M_i\into\R$ are $G_i$-invariant Hamiltonians and let $H_{\mu_i}:P_i\into \R$ be the corresponding reduced Hamiltonians.  We want to give a concrete way to check that the two reduced Hamiltonian flows are equivalent.

Suppose we have a smooth map $F: J_1^{-1}(\mu_1)\into J_2^{-1}(\mu_2)$ which maps $G_1$-orbits into $G_2$-orbits, i.e., $F$ is equivariant.  Then $F$ induces a smooth map of quotient manifolds $\hat F:P_1\into P_2$.  
We will call $F$  {\em partially symplectic} if it preserves the restrictions of the symplectic forms, i.e., $F^*(\omega_2|_{J_2^{-1}(\mu_2)})= \omega_1|_{J_1^{-1}(\mu_1)}$.  It follows that $\hat F:(P_1,\omega_{\mu_1})\into (P_2,\omega_{\mu_2})$ is symplectic.  Hence $\hat F$ is a local diffeomorphism, even if $F$ itself is locally neither injective nor surjective.  Then the usual theory of symplectic maps applied to $\hat F$ gives:

\begin{theorem}\label{th_tworeductions}
Suppose $F: J_1^{-1}(\mu_1)\into J_2^{-1}(\mu_2)$ is a partially symplectic, equivariant map and that the restrictions of the Hamiltonians are related by $H_1= H_2\circ F$.  Then $\hat F:P_1\into P_2$ is a symplectic, local diffeomorphism of the reduced phase spaces which takes orbits of the reduced Hamiltonian flow of $H_{\mu_1}$ to those of $H_{\mu_2}$.  
\end{theorem}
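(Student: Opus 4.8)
The plan is to transfer every piece of structure through the two quotient projections $\pi_i\colon J_i^{-1}(\mu_i)\into P_i$ and then reduce the statement to the standard correspondence between symplectic maps and Hamiltonian flows. First I would record the facts already assembled before the statement: equivariance of $F$ gives a smooth descended map $\hat F\colon P_1\into P_2$ satisfying the commuting relation $\pi_2\circ F=\hat F\circ\pi_1$, and the Meyer--Marsden--Weinstein construction characterizes the reduced forms by $\pi_i^*\omega_{\mu_i}=\omega_i|_{J_i^{-1}(\mu_i)}$. These two identities are the only inputs I need beyond the hypotheses.

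The first real step is to prove $\hat F^*\omega_{\mu_2}=\omega_{\mu_1}$. I would pull the proposed identity back along $\pi_1$ and compute
\[
\pi_1^*\bigl(\hat F^*\omega_{\mu_2}\bigr)
=(\hat F\circ\pi_1)^*\omega_{\mu_2}
=(\pi_2\circ F)^*\omega_{\mu_2}
=F^*\bigl(\pi_2^*\omega_{\mu_2}\bigr)
=F^*\bigl(\omega_2|_{J_2^{-1}(\mu_2)}\bigr)
=\omega_1|_{J_1^{-1}(\mu_1)}
=\pi_1^*\omega_{\mu_1},
\]
where the fourth equality is the defining property of $\omega_{\mu_2}$ and the fifth is the partial-symplecticity hypothesis. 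Since $\pi_1$ is a surjective submersion, $\pi_1^*$ is injective on forms, so the two forms agree on $P_1$ and $\hat F$ is symplectic. A symplectic map necessarily has $d\hat F$ injective, because a vector in its kernel would be $\omega_{\mu_1}$-orthogonal to everything; as the reduced spaces have the same dimension this upgrades to a local diffeomorphism by the inverse function theorem.

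Next I would descend the energy relation and extract the orbit correspondence. Each $H_i$ is $G_i$-invariant, hence factors as $H_{\mu_i}\circ\pi_i=H_i|_{J_i^{-1}(\mu_i)}$; combined with $H_1=H_2\circ F$ and the commuting square this yields $H_{\mu_2}\circ\hat F\circ\pi_1=H_{\mu_1}\circ\pi_1$, and surjectivity of $\pi_1$ gives $H_{\mu_1}=H_{\mu_2}\circ\hat F$. Then the usual theory of symplectic maps applies: for a symplectic local diffeomorphism $\hat F$ with $H_{\mu_1}=H_{\mu_2}\circ\hat F$, the Hamiltonian vector fields are $\hat F$-related, i.e. $d\hat F\circ X_{H_{\mu_1}}=X_{H_{\mu_2}}\circ\hat F$. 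The verification is the one-line computation
\[
\omega_{\mu_2}\bigl(d\hat F\,X_{H_{\mu_1}},\,d\hat F\,v\bigr)
=\omega_{\mu_1}\bigl(X_{H_{\mu_1}},v\bigr)
=dH_{\mu_1}(v)
=dH_{\mu_2}\bigl(d\hat F\,v\bigr),
\]
which, together with surjectivity of $d\hat F$ and nondegeneracy of $\omega_{\mu_2}$, pins down $X_{H_{\mu_2}}\circ\hat F$. Since $\hat F$-related vector fields have $\hat F$-related integral curves, $\hat F$ carries orbits of the reduced flow of $H_{\mu_1}$ onto orbits of that of $H_{\mu_2}$.

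The one delicate point is the descent in the middle step: the equality of the restricted forms lives on the level sets, and transferring it to the quotients works \emph{precisely} because the reduced forms are themselves defined by pullback and $\pi_1^*$ is injective. I would also flag that the phrase \emph{local diffeomorphism} quietly uses $\dim P_1=\dim P_2$; partial symplecticity alone only makes $\hat F$ a symplectic immersion, and the equality of dimensions must be supplied by the geometry of the particular pair of reductions being compared, as it is in each application in this paper. Everything else is routine functoriality of pullbacks and the standard fact that symplectomorphisms intertwine Hamiltonian vector fields.
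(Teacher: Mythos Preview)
Your argument is correct and follows the same route the paper sketches: the paper's ``proof'' consists only of the remarks preceding the statement (partial symplecticity $\Rightarrow$ $\hat F$ is symplectic $\Rightarrow$ local diffeomorphism) together with an appeal to ``the usual theory of symplectic maps,'' and you have simply supplied the details of that appeal via the pullback computation and the $\hat F$-relatedness of the Hamiltonian vector fields. Your flag about $\dim P_1=\dim P_2$ is apt---the paper silently assumes this when asserting the local-diffeomorphism conclusion---and is indeed satisfied in every application made later.
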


\begin{definition}\label{def_partialinverse}
A partially symplectic, equivariant map $G: J_2^{-1}(\mu_2)\into J_1^{-1}(\mu_1)$ such that $F\circ G = \id (\bmod \,G_2)$ and $G\circ F = \id (\bmod\,G_1)$ (so that these maps take group orbits into group orbits)  will be called a {\em pseudo-inverse} for $F$. 
\end{definition} 
A partial inverse $G$ for $F$ induces a bona fide inverse $\hat G$   for $\hat F$ 
which exhibits an equivalence between the two  reduced Hamiltonian flows.

As a special case, suppose the two Hamiltonians are both defined on the same space and have the same symmetry group. If their restrictions to $J^{-1}(\mu)$ agree then they will lead to the same reduced system.  The identity map will provide the required partially symplectic map.  We will call two such Hamiltonians {\em equivalent}.  Equivalent Hamiltonians may produce different flows on $J^{-1}(\mu)$ but the quotient flows will agree.

The following theorems  about the symplectic reduction of a cotangent bundle $M = T^*X$ will be used later.
(See   \cite{Abraham-Marsden} Theorem 4.3.3 for a version of these theorems.) Suppose   $G$ acts freely on the configuration space  $X$   and that the $G$-action on $M$  is the canonical lift of this base action.  Suppose that the orbit space $B$ for  the $G$ action on $X$
is  a manifold  and the  projection $\pi : X \to B$ a submersion.

\begin{theorem}\label{th_cotangentreduction0}
Under the above assumptions, the reduced space  $P_0$ of
$T^*X$ at $\mu = 0$ is
isomorphic to $T^*B$ with its canonical symplectic structure $\omega_B$
\end{theorem}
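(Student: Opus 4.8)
The plan is to identify the zero level set $J^{-1}(0)$ with the annihilator of the orbit tangent spaces, to recognize that annihilator as the image of the pulled-back cotangent spaces of $B$, and then to check that the resulting fiberwise-linear identification intertwines the tautological one-forms. Write $\tau_X:T^*X\into X$ and $\tau_B:T^*B\into B$ for the bundle projections, and let $\theta_X,\theta_B$ denote the corresponding tautological (Liouville) one-forms, so that $\omega=d\theta_X$ and $\omega_B=d\theta_B$. Since the $G$-action on $M=T^*X$ is the cotangent lift of the free base action, the momentum map is $\langle J(\alpha_x),\xi\rangle=\langle \alpha_x,\xi_X(x)\rangle$, where $\xi_X$ is the infinitesimal generator of $\xi\in\mathfrak{g}$ on $X$. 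Because $\pi:X\into B$ is a $G$-invariant submersion whose fibers are exactly the group orbits, the vertical space $\ker d\pi_x$ equals the orbit tangent space $T_x(G\cdot x)$, which is spanned by the generators $\xi_X(x)$. Hence $\alpha_x\in J^{-1}(0)$ precisely when $\alpha_x$ annihilates $\ker d\pi_x$, i.e. precisely when $\alpha_x$ lies in the image of the injective map $(d\pi_x)^*:T^*_{\pi(x)}B\into T^*_xX$.

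First I would use this to define the identification. For $\alpha_x\in J^{-1}(0)$ there is a unique $\beta\in T^*_{\pi(x)}B$ with $\alpha_x=(d\pi_x)^*\beta$, and I set $\Phi(\alpha_x)=\beta\in T^*B$. Fiberwise, $\Phi$ is the inverse of the linear isomorphism $(d\pi_x)^*$ onto its image $J^{-1}(0)_x$, so $\Phi$ is smooth (as one sees in a local trivialization adapted to a local section of $\pi$). By construction $\tau_B\circ\Phi=\pi\circ\tau_X$ on $J^{-1}(0)$, and since $\pi\circ\Psi_g=\pi$ for every $g\in G$ the cotangent lift gives $\Phi(g\cdot\alpha_x)=\Phi(\alpha_x)$; thus $\Phi$ is $G$-invariant and descends to a smooth map $\hat\Phi:P_0\into T^*B$ with $\Phi=\hat\Phi\circ\rho$, where $\rho:J^{-1}(0)\into P_0$ is the quotient projection.

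Next I would verify the symplectic statement by pulling back tautological one-forms, which is the computational heart of the argument. Let $\iota:J^{-1}(0)\hookrightarrow T^*X$ be the inclusion. For $V$ tangent to $J^{-1}(0)$ at $\alpha_x$, using $\tau_B\circ\Phi=\pi\circ\tau_X$ and the definitions of the tautological forms,
\[
(\Phi^*\theta_B)(V)=\langle \beta, d\tau_B(d\Phi(V))\rangle=\langle \beta, d\pi(d\tau_X(V))\rangle=\langle (d\pi_x)^*\beta, d\tau_X(V)\rangle=\langle \alpha_x, d\tau_X(V)\rangle=(\iota^*\theta_X)(V),
\]
so $\Phi^*\theta_B=\iota^*\theta_X$. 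Taking exterior derivatives yields $\Phi^*\omega_B=\iota^*\omega$. The reduced form $\omega_0$ on $P_0$ is characterized by $\rho^*\omega_0=\iota^*\omega$ (this is exactly the construction of $\omega_\mu$ recalled above, with $\mu=0$), so $\rho^*(\hat\Phi^*\omega_B)=\Phi^*\omega_B=\iota^*\omega=\rho^*\omega_0$; as $\rho$ is a surjective submersion, $\rho^*$ is injective on forms, whence $\hat\Phi^*\omega_B=\omega_0$ and $\hat\Phi$ is symplectic.

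Finally I would argue that $\hat\Phi$ is a diffeomorphism. Surjectivity is immediate: given $\beta_b\in T^*B$, choose $x\in\pi^{-1}(b)$ and observe $\Phi((d\pi_x)^*\beta_b)=\beta_b$. For injectivity, if $\Phi(\alpha_x)=\Phi(\alpha'_{x'})=\beta_b$ then $\pi(x)=\pi(x')=b$, so $x'=g\cdot x$ for a unique $g\in G$; both $\alpha'_{x'}$ and $g\cdot\alpha_x$ lie in the fiber $J^{-1}(0)_{x'}$ and map to $\beta_b$ under the fiberwise-injective $\Phi$, forcing $\alpha'_{x'}=g\cdot\alpha_x$, i.e. they represent the same point of $P_0$. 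A dimension count, $\dim P_0=2\dim X-2\dim G=2\dim B=\dim T^*B$, shows the two manifolds have equal dimension, so the symplectic (hence immersive) bijection $\hat\Phi$ is a local diffeomorphism and therefore a diffeomorphism. I do not expect a single hard estimate to be the obstacle; rather the delicate point is keeping the bookkeeping of the cotangent lift straight, specifically establishing the intertwining $\tau_B\circ\Phi=\pi\circ\tau_X$ and the one-form identity $\Phi^*\theta_B=\iota^*\theta_X$, from which everything else follows formally.
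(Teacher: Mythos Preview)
Your proof is correct and follows essentially the same approach as the paper: both identify $J^{-1}(0)_x$ with the image of $(d\pi_x)^*$ and define the same map $\Phi=F$ sending $(x,p)$ to $(\pi(x),(d\pi_x)^{*-1}p)$. The only difference is in how the symplectic property is checked: you give a coordinate-free computation showing $\Phi^*\theta_B=\iota^*\theta_X$ directly from the definition of the tautological forms, while the paper introduces local bundle coordinates $(b,P;g,\mu)$ on $T^*X\cong T^*U\times G\times\mathfrak g^*$ and reads off $F^*\omega_B=\omega_X|_{J^{-1}(0)}$ from the explicit expression $\omega_X=dx\wedge dP+dg\wedge d\mu$, then casts the result as an instance of Theorem~\ref{th_tworeductions}.
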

The theorem can be proved as  a special case  of Theorem \ref{th_tworeductions}.
Because $\pi$ is onto, $d \pi_x: T_x X \to T_{\pi(x)} B$ is an onto linear map for each $x \in X$. 
Consequently the dual map $d \pi_x ^{*}: T^* _{\pi(x)} B \to T^*_x X$ is injective.  
In the next paragraph we will show that the image of this dual is  $J^{-1}(0)_x$:
\begin{equation}
im(d \pi_x ^*) = J^{-1} (0)_x := J^{-1}(0) \cap T^*_x X.
\label{J0}
\end{equation}
It follows that  
we can invert $d \pi_x ^*$ on the fiber $J^{-1}(0)_x \subset  T^* _x X$.
Define  
$$F: J^{-1} (0) \to T^* B  \; ; F(x,p) = (\pi(x),  d \pi_x ^{*-1} (p)) .$$
One verifies that $F$ is a partially symplectic map  relative to    $G$ acting  on $J^{-1} (0)$, and
the trivial group acting on $T^*B$.   
A particularly  easy way to see the partially symplectic nature of $F$
is to introduce  local bundle coordinates  $X \supset \pi^{-1} (U)  \cong U \times G $.
($X$ is covered by sets of this nature.  ) In bundle coordinates
$\pi (x,g) = x$  and so  $T_U ^*X \cong T^*U \times G \times \mathfrak{g}^*$.
We write elements of  $T^* X$ over $U$ as   $(b, P; g, \mu), b \in U, P \in T^* _b U,  g \in G,  \mu \in \mathfrak{g}^*$.
In these coordinates $J(b, P; g, \mu) = \mu$,  so that the
general element of $J^{-1}(0)_U $ can be written    $(b, P_b , g, 0)$  and $F(b, P_b , g, 0) = (b, P_b)$. 
We have  $\omega_X = dx \wedge dP + dg \wedge d \mu$ and, 
$\omega_B = dx \wedge dP$, where we hope the meaning of these symbolic expressions is obvious.
It follows immediately that $F^* \omega_B =  \omega_X|_{J^{-1}(0)}$  which
is the claimed    partially symplectic nature
of $F$.    Theorem~\ref{th_cotangentreduction0} follows.

We explain why eq (\ref{J0}) holds, and in the  
process gain some understanding of the  momentum map.
The group action is a map $ G \times X \to X$ which, when differentiated with respect to $g \in G$ at
the identity  
yields the ``infinitesimal action''   $\sigma: \mathfrak{g} \times X  \to TX$.
For each frozen $x$, the map $\sigma_x  : \mathfrak{g} \to T_x X$ is linear and, because $G$
acts freely,  injective.  As we   vary $x$, $\sigma$ forms  a 
 vector bundle map,  part of an exact sequence  of vector bundle maps
over $X$: 
$$
\xymatrix{
0 \ar[r] & \mathfrak{g} \times X  \ar[r]^{\sigma} & TX \ar[r]^{d\pi} & \pi^* TB \\
}
$$
where $\pi^* TB = \{ (x, V);  x \in X,  V \in T_{\pi(x)} B \}$ is the pull-back of $TB$ over $B$ by the map $\pi: X \to B$.
(Exactness of the sequence follows by differentiating the statement that the fibers of $\pi$ are the $G$-orbits.)
Dualizing we get  
 $$\xymatrix{
0 & \ar[l]  \mathfrak{g^*} \times X  & \ar[l]^{\sigma^*} T^*X  &  \ar[l]^{d\pi ^*}  \pi^*T^*B  \\
}$$  
The momentum map for the $G$-action on $T^*X$ is $\pi_1 \circ \sigma^*$ where
$\pi_1: \mathfrak{g^*} \times X \to \mathfrak{g^*} $ is the projection onto the first factor.
In other words
$$J(x,p) = \sigma_x ^* p.$$
It follows from the exactness of the dual sequence that  $im(d \pi_x ^*) = ker(\sigma_x ^*)$
which is precisely equation (\ref{J0}).

In order to identify the   reduction of $M = T^*X$  at  a non-zero value, $\mu \ne 0$,
we introduce a connection $\Gamma$ for the bundle $G \to X \to B$.  The  curvature of the connection $\Gamma$
is a $\mathfrak{g}$-valued two-form $\Omega$ on $B$, which we may pull-back to $T^*B$ via the canonical projection $\tau_B: T^*B \to B$.  Then $\mu \cdot \Omega$ is a scalar-valued two-form on $B$.
 \begin{theorem}\label{th_cotangentreductionmu}
Under the same  assumptions as above on $G$,  the reduced space $P_\mu$ of  $T^*X$  at  $\mu$ is
isomorphic to $T^*B$ with the twisted symplectic structure $\omega_B - \tau_B^* \mu \cdot \Omega$
\end{theorem}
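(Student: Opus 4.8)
The plan is to use the connection $\Gamma$ to reduce the case $\mu\ne 0$ to the already-settled case $\mu=0$ of Theorem~\ref{th_cotangentreduction0} by a fiberwise momentum shift. Write $A$ for the $\mathfrak{g}$-valued connection one-form on $X$ determined by $\Gamma$, and form the scalar one-form $\alpha_\mu := \langle \mu, A\rangle$ on $X$. Since $A(\sigma_x(\zeta))=\zeta$ for every $\zeta\in\mathfrak{g}$ and $J(x,p)=\sigma_x^*p$, pairing gives $J(x,\alpha_\mu(x))=\mu$, so the section $\alpha_\mu$ of $T^*X$ lies entirely inside $J^{-1}(\mu)$. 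The standing hypothesis that $G$ preserves $J^{-1}(\mu)$ is, by equivariance of the cotangent-lifted momentum map, exactly the coadjoint invariance $\mathrm{Ad}^*_{g^{-1}}\mu=\mu$; since $R_g^*\langle\mu,A\rangle=\langle\mathrm{Ad}^*_{g^{-1}}\mu,A\rangle$, this is precisely what makes $\alpha_\mu$ a $G$-invariant one-form.

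First I would introduce the fiber-translation map $s_\mu:T^*X\to T^*X$, $s_\mu(x,p)=(x,p+\alpha_\mu(x))$. Because $J$ is linear in the fiber variable, $J(x,p+\alpha_\mu(x))=J(x,p)+\mu$, so $s_\mu$ restricts to a $G$-equivariant diffeomorphism of $J^{-1}(0)$ onto $J^{-1}(\mu)$. Next I would record the standard behavior of $s_\mu$ on the tautological one-form $\theta_X$ (the $P\,dx$ of the previous proof), namely $s_\mu^*\theta_X=\theta_X+\tau_X^*\alpha_\mu$, which with $\omega_X=-d\theta_X$ yields
\[ s_\mu^*\bigl(\omega_X|_{J^{-1}(\mu)}\bigr)=\bigl(\omega_X-\tau_X^*d\alpha_\mu\bigr)\big|_{J^{-1}(0)} . \]

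The conceptual heart of the argument is to identify $d\alpha_\mu$, and this is the step I expect to be the main obstacle, both because the curvature first enters here and because it quietly consumes the coadjoint invariance built into the hypotheses. Using the Cartan structure equation $\Omega=dA+\tfrac{1}{2}[A,A]$ together with the fact that coadjoint invariance gives $\langle\mu,[\xi,\eta]\rangle=0$ for all $\xi,\eta\in\mathfrak{g}$, the bracket term is annihilated and $d\alpha_\mu=\langle\mu,\Omega\rangle$. As the curvature is a basic form, $\langle\mu,\Omega\rangle$ descends to the two-form $\mu\cdot\Omega$ on $B$; that is, $d\alpha_\mu=\pi^*(\mu\cdot\Omega)$.

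Finally I would assemble the pieces. Let $F_0:J^{-1}(0)\to T^*B$ be the partially symplectic map of Theorem~\ref{th_cotangentreduction0}, so $F_0^*\omega_B=\omega_X|_{J^{-1}(0)}$ and $\tau_B\circ F_0=\pi\circ\tau_X$. Set $\omega_B^\mu:=\omega_B-\tau_B^*(\mu\cdot\Omega)$ and define $F:=F_0\circ s_\mu^{-1}:J^{-1}(\mu)\to T^*B$, equivariant for $G$ on the source and the trivial group on the target. Pulling back by $s_\mu$, the relation $F_0^*\tau_B^*(\mu\cdot\Omega)=\tau_X^*\pi^*(\mu\cdot\Omega)=\tau_X^*d\alpha_\mu$ combined with the two identities above gives $s_\mu^*F^*\omega_B^\mu=s_\mu^*(\omega_X|_{J^{-1}(\mu)})$, hence $F^*\omega_B^\mu=\omega_X|_{J^{-1}(\mu)}$: the map $F$ is partially symplectic into $(T^*B,\omega_B^\mu)$. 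The mechanism of Theorem~\ref{th_tworeductions} then produces a symplectic local diffeomorphism $\hat F:P_\mu\to(T^*B,\omega_B^\mu)$; and since $s_\mu$ descends to a diffeomorphism $\hat s_\mu:P_0\to P_\mu$ while $F_0$ descends to the isomorphism $\hat F_0:P_0\to T^*B$ of Theorem~\ref{th_cotangentreduction0}, one checks $\hat F=\hat F_0\circ\hat s_\mu^{-1}$, a composite of diffeomorphisms. Thus $\hat F$ is a global symplectomorphism, which is the claimed isomorphism.
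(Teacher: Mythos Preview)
Your proof is correct and follows essentially the same strategy as the paper: a fiberwise momentum shift by the connection form carries $J^{-1}(0)$ to $J^{-1}(\mu)$, and pulling back the canonical one-form produces the curvature correction to $\omega_B$. The paper's proof, however, is only written out for $G=S^1$, where the Lie algebra is abelian and the identification $d\Gamma=\pi^*\Omega$ is immediate; you handle the general case, which requires the Cartan structure equation together with the observation that the standing hypothesis ``$G$ preserves $J^{-1}(\mu)$'' forces $\mathrm{Ad}^*_{g^{-1}}\mu=\mu$ and hence kills the bracket term $\langle\mu,[A,A]\rangle$, and likewise makes $\langle\mu,\Omega\rangle$ basic. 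This extra care is exactly what the general statement needs and what the paper omits, so your version is in fact a genuine completion of the paper's sketch rather than a different route.
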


We only present the proof in the case $G = S^1$, whose Lie algebra we identify with $\R$
in the usual way.  Then a connection is a $G$-invariant  one-form on $T^*X$ which
satisfies the normalization property:
$$J(x, \Gamma (x)) = 1.$$
Its curvature $\Omega$ is defined by
$$ d \Gamma = \pi^* \Omega.$$
We define the momentum shift map: 
$$\Phi_{\mu} : J^{-1} (0) \to J^{-1} (\mu) \; ;  \Phi_{\mu} (x, p) = (x , p + \mu \Gamma (x)).$$
which adds $\mu \Gamma $ pointwise to each covector. 
The fiber-linearity of $J$ shows that $\Phi_{\mu}$ does indeed map $J^{-1}(0)$ onto
$J^{-1} (\mu)$. (The  inverse of $\Phi_{\mu}$  subtracts $\mu \Gamma$. ) The map is $G$-equivariant
since $\Gamma$ is $G$-invariant. Thus $\Phi_{\mu}$ induces a $G$-equivariant diffeomorphim
$J^{-1} (0)/G \to J^{-1} (\mu)/G$.  
We have already identified $J^{-1} (0)/G$ with $T^* B$. However, $\Phi_{\mu}$  is not partially
symplectic, so we cannot directly  apply theorem 1.  To understand and quantify this failure, let   
$\Theta = PdQ$ denote the canonical one-form on $T^*X$. Compute  
$\Phi_{\mu}^* \Theta = \Theta + \mu \tau_X ^* \Gamma$.  Taking the exterior derivative,
using $\omega_X = - d \Theta$ we find that $\Phi_{\mu} ^* \omega_X = \omega_X - \mu \tau_X ^* \pi^* \Omega$.
This equation implies  that if we shift the canonical two-form on
$J^{-1}(0)$ by subtracting $\mu \tau_X ^* \pi^* \Omega$
then $\Phi_{\mu}$ is a partially symplectic map between $J^{-1} (0)$ and $J^{-1} (\mu)$.
Theorem \ref{th_cotangentreductionmu} follows.

\section{Reduction by Translations}\label{sec_reductionbytranslations}
To formulate the Newtonian planar three-body problem, it is convenient to use the complex plane where we identify $(x,y)\in\R^2$ with $x+i\, y\in\C$.  

 Let  $q_i\in \C, i=1,2,3$ be the positions of the three-bodies and let 
$q=(q_1,q_2,q_3)\in\C^3$.  We will adopt the Hamiltonian point of view where the conjugate momentum variables $p_i$ are covectors rather than vectors.  If we identify a covector $(a,b)\in\R^{2*}$ with $a+i\, b\in\C$ then we have momentum variables 
$p_i\in\C^*\simeq\C$ and $p=(p_1,p_2,p_3)\in\C^{3*}$.

The planar three-body problem is the Hamiltonian system on the phase space $(\C^3\setminus \Delta)\times \C^{3*}$ with Hamiltonian
\begin{equation}\label{eq_Hamiltonianqp}
\begin{aligned}
H(q,p) &= K_0(p) - U(q)\\
K_0(p) &=  \fr{|p_1|^2}{2m_1}+ \fr{|p_2|^2}{2m_2}+ \fr{|p_3|^2}{2m_3}\\
U(q) &=  \fr{m_1 m_2}{|q_1-q_2|}+ \fr{m_3 m_1}{|q_3-q_1|}+ \fr{m_2 m_3}{|q_2-q_3|}
\end{aligned}
\end{equation}
where $\Delta = \{q: q_i = q_j \; \text{for some}\; i\ne j\}$, the singular set.  From now on, we will not explicitly mention that the singular set must be deleted from the domains of the various Hamiltonians we construct.

The Newtonian potential is invariant under the group $G=\C$ acting by translation on the position vectors and leaving the momenta fixed.  The momentum map is given by
$$p_{tot} = p_1+p_2+ p_3 \in\C^{*}.$$
By fixing a value of this integral and passing to the quotient space one obtains a reduced Hamiltonian system.
A simple and familiar way to accomplish this reduction is to assume $p_{tot}= 0$ and then fix the center of mass at the origin: $m_1q_1+m_2q_2+m_3 q_3 = 0$.

However, we will now describe an alternative method for eliminating the translation symmetry which will make it easier to regularize double collisions later on.   This approach is a variation on the one used by Heggie in \cite{Heggie}.  We will view it as an application of theorem~\ref{th_tworeductions}.

\subsection{Relative coordinates}\label{sec_relative}
Introduce relative position variables $Q_{12}, Q_{31},Q_{23}\in \C$ and corresponding momentum variables  $P_{12}, P_{31},P_{23}\in\C^*$.  The relative coordinates are related to the positions variables $q_i$ by a linear map $Q = Lq$
\begin{equation}\label{eq_Lmap}
L: \C^3 \into \C^3  \qquad \quad  Q_{12} = q_1-q_2\quad Q_{31} = q_3-q_1\quad Q_{23} = q_2-q_3.
\end{equation}
The dual map, which describes the pull-back of the relative momenta $P_{ij}$ to $p$ space,  is given by 
\begin{equation}\label{eq_Lstarmap}
L^*: \C^{3*} \into \C^{3*}\qquad \quad  p_1 = P_{12}-P_{31}\quad p_2 = P_{23}-P_{12} \quad p_3 = P_{31} -P_{23}.
\end{equation}
We naturally have  $Q_{ji} = - Q_{ij}$ and consequently $P_{ji} = -P_{ij}$ so that  eq (\ref{eq_Lstarmap})
can   be written $p_i = \Sigma_j P_{ij}$, a form which extends to the $N$-body problem. 

 The linear map $L$ is neither one-to-one nor onto.  Its kernel,
$$\ker L = \{q: q = (c,c,c)\;\text{for some}\; c\in\R^2= \C\},$$
is  the subspace of translation symmetries  in $q$-space.  So its image 
$$\cal W =  \im L =  \{Q: Q_{12}+Q_{31}+Q_{23} = 0\}$$
is isomorphic to the quotient space of $\C^{3}$ by translations.   $\cal W$ is a complex subspace of $\C^3$ with complex dimension two, or real dimension 4.
We can define a map in the other direction, $q=L^\dagger(Q)$
 \begin{equation}\label{eq_Ldagger}
L^\dagger:\quad  q_1 = \fr{m_2Q_{12}-m_3Q_{31}}{m},q_2 = \fr{m_3Q_{23}-m_1Q_{12}}{m},q_3 = \fr{m_1Q_{31}-m_2Q_{23}}{m}
\end{equation}
$L^\dagger$ maps $\C^3$ onto
$$\cW' = \im L^\dagger =  \{q: m_1q_1+m_2q_2+m_3q_3 = 0\}$$
the zero-center of mass subspace and it is easy to check that the restrictions $L|_{\cW'}$ and $L^\dagger|_\cW$ are inverses.

 For the dual map, we find that the kernel
 is generated by translations in $P$-momentum space
$$\ker L^* = \{P: P = (c,c,c)\;\text{for some}\; c\in\C ^* \}$$
 while the image is the zero-momentum subspace
$$\cal V = \im L^* = \{p: p_1+p_2 + p_n=0\}.$$
The map $L^{\dagger *}:\C^{3*} \into \C^{3*}$
\begin{equation}\label{eq_Lstardagger}
L^{\dagger *}: \qquad  P_{12}= \fr{m_2p_1-m_1p_2}{m},P_{31}= \fr{m_1p_3-m_3p_1}{m},P_{23}= \fr{m_3p_2-m_2p_3}{m}
\end{equation}
maps $\C^{3*}$ onto
$$\cV' = \im L^{\dagger *} =  \{P: m_3P_{12} +m_2P_{31} + m_1 P_{23} =  0\}$$
and the restrictions $L^*|_{\cV'}$ and $L^{\dagger *}|_\cV$ are inverses. 

Define a relative coordinate Hamiltonian   on the $(Q,P)$ phase space $\C^{3}\times \C^{3*}$ by
\begin{equation}\label{eq_HamiltonianPQ3bp}
\begin{aligned}
H_{rel}(Q,P) &= K(P) - U(Q)\\
K(P) = K_0(L^*P) &= \fr{|P_{12}-P_{31}|^2}{2m_1} + \fr{|P_{23}-P_{12}|^2}{2m_2} + \fr{|P_{31}-P_{23}|^2}{2m_3} \\
U(Q) &= \fr{m_1m_2}{|Q_{12}|} +  \fr{m_3m_1}{|Q_{31}|} + \fr{m_2m_3}{|Q_{23}|},
\end{aligned}
\end{equation}
so that
\begin{equation}
\label{eq_relpositionHam}
H(q, L^* P) = H_{rel} (Lq, P).
\end{equation}
The kinetic energy can be written
\begin{equation}\label{eq_Bmatrix}
K(P) = \fr12 P^T B P\qquad B = \m{ (\fr{1}{m_1} +\fr{1}{m_2})I& -\fr{1}{m_1}I &-\fr{1}{m_2}I \\
-\fr{1}{m_1}I &(\fr{1}{m_3}+\fr{1}{m_1})I &-\fr{1}{m_3} I \\
-\fr{1}{m_2}I&-\fr{1}{m_3}I &(\fr{1}{m_2} +\fr{1}{m_3})I    }
\end{equation}
where $I$ denotes the $2\times 2$ identity matrix.

\subsection{Equivalance to the translation-reduced three-body problem}
We will now show that the reduction of the Hamiltonian system with Hamiltonian $H_{rel}(Q,P)$ by translations in momentum space is equivalent to the reduction of the three-body Hamiltonian $H$ by translations in configuration space.

\begin{theorem}\label{th_translationreduction}
 $\cW \times \C^{3*}$ is invariant under the Hamiltonian flow of $H_{rel}(Q,P)$.  The restricted flow  is invariant  under translations in momentum space and it induces a quotient flow which is conjugate to the zero total momentum flow of the three-body problem reduced by translations.\label{th_translation}
\end{theorem}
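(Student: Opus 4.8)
The plan is to recognize this statement as a direct application of Theorem~\ref{th_tworeductions}, taking the translation--reduced three-body problem as system~$1$ and $H_{rel}$ as system~$2$, and then to upgrade the resulting symplectic local diffeomorphism to a genuine conjugacy by exhibiting a pseudo-inverse in the sense of Definition~\ref{def_partialinverse}. The first task is to pin down the symmetry of $H_{rel}$. Since $K(P)$ in \eqref{eq_HamiltonianPQ3bp} involves $P$ only through the differences $P_{12}-P_{31}$, $P_{23}-P_{12}$, $P_{31}-P_{23}$ (these are exactly $L^*P$), and $U(Q)$ is independent of $P$, the Hamiltonian $H_{rel}$ is invariant under the symplectic $\C$-action $P\mapsto P+c\,\one$ translating the momentum fibers, with $c\in\C$ and kernel $\ker L^*$; call this group $G_2$. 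A short computation identifies its momentum map, up to sign, as $J_2(Q,P)=Q_{12}+Q_{31}+Q_{23}$, so that $J_2^{-1}(0)=\cW\times\C^{3*}$.

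Parts one and two of the theorem then follow at once. Because $J_2$ is the momentum map of a symmetry of $H_{rel}$ it is a conserved quantity, so its zero level set $\cW\times\C^{3*}$ is invariant under the flow; equivalently, $\tfrac{d}{dt}(Q_{12}+Q_{31}+Q_{23})=\sum_{ij}\partial K/\partial P_{ij}=0$ by the translation invariance of $K$. And because $H_{rel}$ is $G_2$-invariant, its flow commutes with the momentum translations, so the restricted flow is $G_2$-invariant and descends to the quotient $(\cW\times\C^{3*})/G_2$.

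For the conjugacy I take system~$1$ to be $\C^3\times\C^{3*}$ with $G_1=\C$ acting by $q\mapsto q+c\,\one$, momentum map $J_1=p_1+p_2+p_3$, and $\mu_1=0$, so $J_1^{-1}(0)=\C^3\times\cV$; then I define
\[
F:J_1^{-1}(0)\into J_2^{-1}(0),\qquad F(q,p)=(Lq,\,L^{\dagger*}p),
\]
using \eqref{eq_Lmap} and \eqref{eq_Lstardagger}; since $Lq\in\cW$ this does land in $\cW\times\C^{3*}$. Three properties must be checked. Equivariance is automatic and in fact degenerate: as $\one\in\ker L$, the map $F$ is constant on each $G_1$-orbit and so carries it into a single point, a fortiori into a $G_2$-orbit. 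The intertwining $H_1=H_2\circ F$ comes from \eqref{eq_relpositionHam}: on $J_1^{-1}(0)$ we have $p\in\cV$, hence $L^*L^{\dagger*}p=p$ because $L^*|_{\cV'}$ and $L^{\dagger*}|_\cV$ are inverse, so $H_{rel}(Lq,L^{\dagger*}p)=H(q,L^*L^{\dagger*}p)=H(q,p)$. Partial symplecticity I verify on the tautological one-forms $\Theta_i$: substituting into $\Theta_2=\langle P,dQ\rangle$ gives $F^*\Theta_2=\langle L^{\dagger*}p,L\,dq\rangle=\langle L^*L^{\dagger*}p,dq\rangle$, which on $J_1^{-1}(0)$ equals $\langle p,dq\rangle=\Theta_1$; taking exterior derivatives yields $F^*(\omega_2|_{J_2^{-1}(0)})=\omega_1|_{J_1^{-1}(0)}$. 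Theorem~\ref{th_tworeductions} then produces a symplectic local diffeomorphism $\hat F:P_1\into P_2$ matching the two reduced flows.

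To turn this into an honest conjugacy I propose the pseudo-inverse $G(Q,P)=(L^\dagger Q,\,L^*P)$ from \eqref{eq_Ldagger} and \eqref{eq_Lstarmap} and check the relations of Definition~\ref{def_partialinverse}. Here is the main bookkeeping obstacle: $L$ and $L^*$ are neither injective nor surjective, so the compositions $L^\dagger L$, $LL^\dagger$, $L^*L^{\dagger*}$, $L^{\dagger*}L^*$ are projections onto the zero--center-of-mass or zero-sum subspaces rather than identities. The point is that on the correct level sets exactly one factor in each composite $F\circ G$ and $G\circ F$ is honestly the identity---$LL^\dagger Q=Q$ for $Q\in\cW$ and $L^*L^{\dagger*}p=p$ for $p\in\cV$---while the other differs from the identity only by a translation lying in $\ker L$ or $\ker L^*$, that is, only along a group orbit. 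Tracking that these defects are absorbed by the two quotients, so that $F\circ G=\id\ (\mathrm{mod}\,G_2)$ and $G\circ F=\id\ (\mathrm{mod}\,G_1)$, is the crux of the argument; once it is done, the observation recorded after Definition~\ref{def_partialinverse} supplies an inverse $\hat G=\hat F^{-1}$, proving that the quotient flow on $(\cW\times\C^{3*})/G_2$ is conjugate to the zero--total--momentum, translation--reduced three-body flow.
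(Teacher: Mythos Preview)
Your proposal is correct and follows essentially the same route as the paper's own proof: both set up the two systems with $G_1=\C$ translating $q$ and $G_2=\C$ translating $P$, identify the momentum levels $\{p_{tot}=0\}$ and $\{Q_{tot}=0\}$, define $F(q,p)=(Lq,L^{\dagger*}p)$ and $G(Q,P)=(L^\dagger Q,L^*P)$, and verify the pseudo-inverse relations and partial symplecticity via the canonical one-forms. One small omission: Definition~\ref{def_partialinverse} requires the pseudo-inverse $G$ itself to be partially symplectic, and you only verify this for $F$; the paper checks both, obtaining $G^*\theta=\Theta-\re\bigl((m_3\bar P_{12}+m_2\bar P_{31}+m_1\bar P_{23})\,dQ_{tot}\bigr)/m$, which restricts correctly on $\{Q_{tot}=0\}$---the computation is entirely parallel to your check for $F$.
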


The proof will be an application of theorem~\ref{th_tworeductions}.  First we describe how the relevant symplectic structures look in complex coordinates.  If $Q\in\C^3$ and $P\in\C^{3*}$ it is convenient to define a Hermitian variant of the natural evaluation pairing:
\begin{equation}\label{eq_pairing}
\metric{P,Q} = \bar P_{12}Q_{12}+\bar P_{31}Q_{31}+\bar P_{23}Q_{23}.
\end{equation}
As a result, if $Q_{jk} = x_{jk}+i\,y_{jk}$ and $P_{jk} = a_{jk}+i\,b_{jk}$ we get
\begin{equation}\label{eq_realimag}
\begin{aligned}
\re \metric{P,Q}&= a_{12}x_{12}+b_{12}y_{12}+\ldots\\
\im \metric{P,Q} &= a_{12}y_{12}-b_{12}x_{12}+\ldots 
\end{aligned}
\end{equation}
Thus the real part of the complex pairing agrees with the usual real pairing and, as a bonus,  the imaginary part is $-\mu$ where $\mu$ is the angular momentum.  With this convention, the canonical one-forms on $(q,p)$-space and $(Q,P)$-space can be written
\begin{equation}\label{eq_canonicaloneforms}
\begin{aligned}
\theta &= \re\pairing{p,dq} = \re(\bar p_1\,dq_1 +  \bar p_2\,dq_2 +  \bar p_3\,dq_3) \\
 \Theta &= \re\pairing{P,dQ}=\re(\bar P_{12}\,dQ_{12}+\bar P_{31}\,dQ_{31}+ \bar P_{23}\,dQ_{23}).
\end{aligned}
\end{equation}

\begin{proof}[Proof of theorem~\ref{th_translationreduction}]
For the three-body problem we have the phase space $M_1 = \C^6\times\C^{6*} = \{(q,p)\}$  with the standard symplectic structure.  The Hamiltonian $H(q,p)$ is invariant under the action of the group $G_1=\C$ acting by 
$c\cdot(q,p) = (q_1+c, q_2+c, q_3+c, p_1,p_2,p_3), c\in\C$.  We fix the momentum level $p_{tot}=0$ and obtain a quotient Hamiltonian flow.

For the Hamiltonian $H_{rel}$ the phase space is  $M_2 = \C^3\times\C^{3*} = \{(Q,P)\}$  with the standard symplectic structure.  $H_{rel}(Q,P)$ is invariant under the action of the group $G_2=\C^{*}$ acting on by 
$c \cdot(Q,P) = (Q_{12}, Q_{31}, Q_{23}, P_{12}+c, P_{31}+c, P_{23}+c), c\in\C^{*}$.  
The momentum map is $Q_{tot} = Q_{12}+Q_{31}+Q_{23}$ and we
fix the momentum level $Q_{tot}=0$ giving a second quotient Hamiltonian flow.

To see that these two quotient flows are equivalent we apply Theorem ~\ref{th_tworeductions}.
Define 
$$F(q,p) = (Lq,L^{\dagger *}p) \qquad G(Q,P) = (L^\dagger Q, L^*P).$$
Then, 
$F:\{p_{tot}=0\}  \into \{Q_{tot}=0\}$ and $G:\{Q_{tot}=0\} \into \{p_{tot}=0\}$.
Moreover  $G\circ F(q,p) = c \cdot (q,p)$ where $-c = \fr1m(m_1q_1+m_2q_2+m_3q_3)\in\C$ is the center of mass.  Similarly $F\circ G(q,p) = c\cdot (Q,P)$ where $-c = \fr1m(m_3P_{12}+m_2P_{31}+m_1P_{23})\in\C^*$.  
In other words $G\circ F = \id (\bmod \,G_1)$ and $F\circ G= \id (\bmod \,G_2)$. 

It remains to verify that  
$F$ and $G$ are  partially symplectic.    Consider  the canonical one-forms (\ref{eq_canonicaloneforms}).
From (\ref{eq_Lmap}) and (\ref{eq_Lstardagger}).  We find, for example $F^*\bar P_{12} =  (m_2\bar p_1-m_1\bar p_2)/m$ and
$F^*dQ_{12} = dq_1-dq_2$.  After a bit of algebra we get
$$F^*\Theta =\theta - \re (\bar p_{tot}(m_1dq_1+m_2dq_2+m_3dq_3)/m).$$
Restricting to $\{p_{tot}=0\}$ shows that $F$ is partially symplectic.  Similarly 
$$G^*\t= \Theta- \re( (m_3\bar P_{12}+m_2\bar P_{31}+m_1\bar P_{23})(dQ_{12}+dQ_{31}+dQ_{23}))/m$$
which we restrict to $\{Q_{tot}=0\}$ to see that $G$ is also partially symplectic.
We have shown that $F$ and $G$ are   pseudo-inverses   in the sense of    definition \ref{def_partialinverse}.
According to eq. (\ref{eq_relpositionHam}) these pseudo-inverses  intertwine $H$ and $H_{rel}$.  
The hypothesis of theorem 1 have been verified, completing the proof.  
\end{proof}

Hamilton's equations for the Hamiltonian $H_{rel}(Q,P)$ are simply
\begin{equation}\label{eq_HamiltonsEqsPQ3bp}
\begin{aligned}
\dot Q  &= B P\\
\dot P &= U_Q = -(\fr{m_1m_2 Q_{12}}{r_{12}^3},\fr{m_3m_1 Q_{31}}{r_{31}^3},\fr{m_2m_3 Q_{23}}{r_{23}^3})
\end{aligned}
\end{equation}
where $r_{ij}= |Q_{ij}|$.
(Note that here and in all of the differential equations below, partial derivatives like $U_Q$ are calculated by simply calculating the corresponding real partial derivatives and converting the resulting real vector or covector to complex notation;  no complex differentiations are involved).
Differential equations for the three-body problem reduced by translations are obtained by restricting $Q$ to $\cW$.  Then $Q$ remains in $\cW$ under the flow.   Moreover, covectors $P,P'$ which are initially equivalent under translation remain so.

Since the symmetry group $\C^*$ acts only on the momenta $P_{ij}$, the reduced phase space is the eight-dimensional space $\cW\times (\C^{3*}/ \C^*)\simeq \cW\times \im L^* = \cW\times \cV$.  This can be identified with the cotangent bundle $T^*\cW = \cW\times \cW^*$ as follows.  Let $P\in\C^{3*}$.  Then $P|_{\cW} \in \cW^*$ and two covectors $P,P'\in\C^{3*}$ have the same restriction to $\cW$  if they differ by an element of $\ker L^*$, i.e., if they are equivalent under the symmetry group.

So far we have not really accomplished any ``reduction'' since there are still $12$ $(Q,P)$ variables.  Essentially, we have traded the constraint $p_{tot}=p_1+p_2+p_3=0$ and the translation symmetry in $q$ for the constraint $Q_{tot} = Q_{12}+Q_{31}+Q_{23} = 0$ and translation symmetry in $P$.  We will see below that the use of the $Q_{ij}$ is advantageous for regularizing double collisions.  A genuine reduction of dimension can be easily achieved by introducing a basis for $\cW$. Moreover, this can be accomplished in several ways as we will see in section~\ref{sec_parametrizingW} below.  But one virtue of (\ref{eq_HamiltonianPQ3bp}) is that it avoids making a choice of parametrization and thereby preserves the symmetry of the problem under permutations of the masses.

\subsection{Mass Metrics and the Kinetic Energy}\label{sec_massmetrics}
The potential energy $U(Q)$ of (\ref{eq_HamiltonianPQ3bp}) is particularly simple, but the kinetic energy $K(P)$ seems less natural.  In this section we will see that it is related by duality to a Hermitian metric which will play an important role later on.

Define a Hermitian  {\em mass metric} on $\C^3$ by
\begin{equation}\label{eq_massmetric}
\metric{V,W}=  \fr 1m \left( m_1 m_2 \bar V_{12}^TW_{12} +m_3 m_1 \bar V_{31}^T W_{31} +m_2 m_3 \bar V_{23}^T W_{23} \right).
\end{equation} 
The corresponding norm is given by
\begin{equation}\label{eq_normQsq}
|Q|^2 = \fr 1m \left( m_1 m_2 |Q_{12}|^2 +m_3 m_1 |Q_{31}|^2+m_2 m_3 |Q_{23}|^2\right).
\end{equation}
The mass norm 
$$r= |Q| = \sqrt{\metric{Q, Q}}$$
provides a natural measure of the size of a configuration $Q= (Q_{12},Q_{31},Q_{23})\in\C^3$.   In particular, $r=0$ represent triple collision.  There is a {\em dual mass metric} on $\C^{3*}$ given by
\begin{equation}\label{eq_dualmassmetric}
\metric{P,R}= m \left( \fr{\bar P_{12}^TR_{12}}{m_1m_2} +\fr{\bar P_{31}^T R_{31}}{m_3 m_1} + \fr{\bar P_{23}^T R_{23}}{m_2 m_3} \right).
\end{equation}
with squared norm
\begin{equation}\label{eq_normPsq}
|P|^2 = m \left( \fr{ |P_{12}|^2}{m_1m_2}+\fr{ |P_{31}|^2}{m_3 m_1}+\fr{ |P_{23}|^2}{m_2 m_3}\right).
\end{equation}
Note:
 Altogether we have  three interpretations of $\metric{.,.}$ depending on whether the arguments are two vectors
 (eq \ref{eq_massmetric}), two covectors  (eq \ref{eq_dualmassmetric}),  or a vector and a covector,
  (eq \ref{eq_pairing}).  All three pairings are Hermitian, begin complex-linear in the second argument and anti-linear in the first.

Introduce the notation $\cW_0 = \cW\setminus 0$ (and a similar notation for any vectorspace).
If $Q\in \cW_0 $ then it is easy to check that the vectors $Q, N, T$ form a Hermitian-orthogonal complex basis for $T_Q\C^3$ with respect to the Hermitian mass metric, where
\begin{equation}\label{eq_QNT}
\begin{aligned}
Q &= (Q_{12},Q_{31},Q_{23})\\
N &= (m_3,m_2,m_1)\\
T &= (\fr{\bar Q_{31}}{m_2}-\fr{\bar Q_{23}}{m_1},\fr{\bar Q_{23}}{m_1}-\fr{\bar Q_{12}}{m_3},\fr{\bar Q_{12}}{m_3}-\fr{\bar Q_{31}}{m_2}).
\end{aligned}
\end{equation}
$Q$ is a radial vector and $N,T$ are respectively normal and tangent to $\cW$.  Clearly $\{Q,T\}$ is a basis for $\cW$.  

The next lemma shows the relationship between the kinetic energy and the dual of the mass metric.
\begin{remark} On terminology.  A nondegenerate quadratic form on a vector space, or on the fibers of 
a vector bundle, determines uniquely a quadratic form on the dual vector space, or on the fibers of the dual vector bundle.
We refer to this dual quadratic form as either the `cometric' or the `dual norm'.
\end{remark}
\begin{lemma}\label{lemma_kinetic}
The kinetic energy satisfies
\begin{equation}\label{eq_kinetic}
K(P) =\fr12\fr{|\pairing{P,Q}|^2}{|Q|^2}+\fr12\fr{|\pairing{P,T}|^2}{|T|^2}= \fr12 |P|^2-\fr12 \fr{|\pairing{P,N}|^2}{|N|^2}= \fr12|\pi_\cW^*P|^2
\end{equation}
where $|P|$ is the dual mass norm and where $\pi_\cW: \C^3\into \C^3$ is orthogonal projection onto $\cW$ with respect to the mass metric.

Moreover, $K(P)$ can be characterized as one-half of the unique translation-invariant quadratic form on $T^*_Q\C^3$ which represent the dual of the restriction of the mass norm to $T_Q\cW$.
\end{lemma}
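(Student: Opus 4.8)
The plan is to prove the three expressions for $K(P)$ in equation (\ref{eq_kinetic}) by exploiting the Hermitian-orthogonal decomposition of $\C^3$ afforded by the basis $\{Q,N,T\}$ from (\ref{eq_QNT}), and then to identify $K$ with the cometric of the restricted mass norm. First I would note that by construction $K(P) = \fr12|\pi_\cW^* P|^2$ is what we expect: the dual mass norm $|P|^2$ of (\ref{eq_normPsq}) is, by the duality set up in the remark, the cometric of the full mass metric $|Q|^2$ of (\ref{eq_normQsq}) on $\C^3$. So the real content is that $K(P)$ as defined in (\ref{eq_HamiltonianPQ3bp}) equals the restriction of this cometric to the covectors that ``see only $\cW$,'' or equivalently that $K(P)$ discards the component of $P$ dual to the normal direction $N$.

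The key computational step is to verify directly that the dual mass metric, evaluated on the kinetic-energy form, reproduces the $B$ matrix of (\ref{eq_Bmatrix}). Concretely, I would check that the quadratic form $\fr12 P^TBP$ equals $\fr12|P|^2 - \fr12|\pairing{P,N}|^2/|N|^2$. Since $N=(m_3,m_2,m_1)$ is (up to the metric) the normal to $\cW$, the term $\pairing{P,N}$ measures the $N$-component of $P$ under the pairing (\ref{eq_pairing}), and subtracting $|\pairing{P,N}|^2/|N|^2$ from the full dual norm is exactly Hermitian-orthogonal projection of the dual variable off the $N$-axis. This is the step I expect to be the main obstacle, because it requires carefully tracking the three distinct roles of $\metric{\cdot,\cdot}$ (vector-vector, covector-covector, and the vector-covector pairing) and confirming that $N$ really is mass-orthogonal to $\cW$ — i.e. that $\pairing{N,W}$, or rather $\metric{N,W}$, vanishes for all $W\in\cW$. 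Once orthogonality of $\{Q,N,T\}$ is in hand (stated just above the lemma), the Pythagorean identity for the dual norm gives $|P|^2 = |\pairing{P,Q}|^2/|Q|^2 + |\pairing{P,N}|^2/|N|^2 + |\pairing{P,T}|^2/|T|^2$, and the three displayed equalities in (\ref{eq_kinetic}) follow by grouping terms: the first expression keeps the $Q$ and $T$ (i.e. $\cW$) components, the second subtracts the $N$ component from the total, and both equal $\fr12|\pi_\cW^* P|^2$.

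For the final ``moreover'' claim, I would argue as follows. The mass metric restricts to a nondegenerate Hermitian form on the complex two-plane $T_Q\cW$, and by the remark this restriction determines a unique cometric on $T_Q^*\cW=\cW^*$. Pulling back along the projection $P\mapsto P|_\cW$ gives a quadratic form on $T_Q^*\C^3$; I would show this pulled-back form is translation-invariant (depends on $P$ only through $P|_\cW$, equivalently kills $\ker L^*$, the translations in $P$-space) and agrees with the cometric of the restricted norm. The uniqueness is the easy half: any translation-invariant quadratic form on $T_Q^*\C^3$ descends to $\cW^*$, and if it represents the dual of the restricted mass norm it must be the cometric, which is unique. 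The existence half is precisely the identity $K(P)=\fr12|\pi_\cW^*P|^2$ already established, since $\pi_\cW^*$ projects onto $\cW$ and its square norm manifestly depends only on $P|_\cW$. Thus $K$ is exactly one-half that unique form, completing the lemma.
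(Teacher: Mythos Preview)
Your proposal is correct and follows essentially the same approach as the paper: a direct computation showing $2K(P)=|P|^2-|\pairing{P,N}|^2/|N|^2$, followed by the Pythagorean identity for the dual norm relative to the orthogonal basis $\{Q,N,T\}$, and then the identification of the $\{Q,T\}$-part with the lift of the cometric on $\cW$. The paper's proof is slightly terser but the logical steps and the order in which they are taken are the same as yours.
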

\begin{proof}
A direct computation shows that
$$|P|^2 - \fr{|\pairing{P,N}|^2}{|N|^2} = \fr{|P_{12}-P_{31}|^2}{2m_1} + \fr{|P_{23}-P_{12}|^2}{2m_2} + \fr{|P_{31}-P_{23}|^2}{2m_3} = 2K(P).$$
On the other hand, dual norms, or `cometrics', can be characterized by the property that for any orthogonal basis $\{Q,N,T\}$, 
$$|P|^2 =\fr{|\pairing{P,Q}|^2}{|Q|^2}+\fr{|\pairing{P,N}|^2}{|N|^2}+\fr{|\pairing{P,T}|^2}{|T|^2}.$$
Hence
$$2K(P) = |P|^2-\fr{|\pairing{P,N}|^2}{|N|^2}=\fr{|\pairing{P,Q}|^2}{|Q|^2}+\fr{|\pairing{P,T}|^2}{|T|^2}.$$
and this is also the formula for $|P\circ \pi_\cW|^2$.

If we view $T^*_Q\cW$ as the quotient space of $T^*_Q\C^3$ under momentum translations, then any norm on $T^*_Q\cW$ is represented by a unique translation-invariant quadratic form on $T^*_Q\C^3$.   In particular, this applies to the dual norm of the restriction of the mass norm to $T_Q\cW$.  
Since $\{Q,T\}$ is an orthogonal basis for $T_Q\cW$ with respect to the mass metric, this ``lift'' of the dual norm will be given by
$$\fr{|\pairing{P,Q}|^2}{|Q|^2}+\fr{|\pairing{P,T}|^2}{|T|^2} = 2K(P).$$
\end{proof}

\subsection{Parametrizing $\cW$}\label{sec_parametrizingW}
Let 
$$e_1= (a_{12},a_{31},a_{23})\qquad e_2= (b_{12},b_{31},b_{23}) \in\cW$$
 be any complex basis for $\cW$.  The corresponding coordinate map  is $f:\C^2\into \cW\subset \C^3$,
$$f(\xi_1,\xi_2) = \xi_1\,e_1 + \xi_2\,e_2\quad\text{or}\quad Q_{ij} = \xi_1\,a_{ij} + \xi_2\, b_{ij}$$
where $\xi = (\xi_1,\xi_2)\in\C^2$ are the new coordinates.  

Extend $f$ to a map
$F:T^*\C^2\into \cW\times\C^{3*}$ by letting $P\in\C^{3*}$ be any solution to the equations
$$\pairing{P,e_1} = \bar\eta_1\qquad \pairing{P,e_2}=\bar\eta_2$$
where $\eta=(\eta_1,\eta_2)\in\C^{2*}$ is the dual momentum to $\xi$ and $N$ is the normal vector to $\cW$ from (\ref{eq_QNT}).   Any two solutions will differ by a momentum translation which will not affect the computations below. This definition makes $F$ partially symplectic, where the symplectic structure on $T^*\C^2$ derives from the canonical one-form 
$$\theta = \re\pairing{\eta,\xi} =\re(\bar \eta_1\,\xi_1 +\bar\eta_2\,\xi_2).$$

To find the new Hamiltonian, note that the pull-back of the Hermitian mass metric  is 
$$\metric{\xi,\xi'} = {\bar\xi}^T G \xi' \qquad\qquad G = \m{g_{11}&g_{12} \cr g_{21}&g_{22}}\quad g_{ij}= \metric{e_i,e_j}.$$
Clearly this can be viewed as the pull-back of the restriction of the mass metric to $\cW$.  
The dual of this metric is
$$\metric{\eta,\eta'} ={ \bar\xi}^T G \xi' \qquad\qquad  G^{-1} = \fr1{g}\m{g_{22}&-g_{21} \cr - g_{12}&g_{11}}\quad g= \deter G.$$
It follows from lemma~\ref{lemma_kinetic} and
the fact that the momenta  also transform as pull-backs that the kinetic energy will be one-half of the dual norm. 

The Hamiltonian becomes
\begin{equation}\label{eq_parametrizedH}
H(\xi,\eta) = \fr12\bar{\eta}^TG^{-1}\eta- U(\xi)
\end{equation}
 where
 $$U(\xi) = \fr{m_1 m_2}{\rho_{12}}+\fr{m_1 m_3}{\rho_{31}}+\fr{m_2 m_3}{\rho_{23}}\qquad \rho_{ij} = |Q_{ij}| = |a_{ij}\xi_1 + b_{ij} \xi_2|.$$

\begin{example}[Heliocentric Coordinates]
One can form such a parametrization of $\cal W$ by choosing one of the masses, 
say $m_1$, to play the role of  the origin.  Set
$$Q_{12}= -\xi_1 \qquad Q_{31}= \xi_2 \qquad Q_{23} = \xi_1- \xi_2$$
so that $\xi_1,\xi_2 \in\C$ are the coordinates of $m_2,m_3$ relative to $m_1$.  
The corresponding basis for $\cW$
$$e_1 = (-1,0,1)\qquad e_2 = (0,1,-1)$$
and the momenta $\bar \eta_i = \pairing{P,e_i}$ satisfy
$$\eta_1 =  P_{23}-P_{12}\qquad \eta_2 = P_{31}-P_{23}.$$
For example, we can choose
$$P_{12}=-\eta_1 \qquad P_{31}= \eta_2 \qquad P_{23}=0.$$
Substituting into $H_{red}$ gives the familiar Hamiltonian
 $$H(\xi,\eta) = \fr{|\eta_1+ \eta_2|^2}{2m_1}+ \fr{|\eta_1|^2}{2m_2}+ \fr{|\eta_2|^2}{2m_3} - 
  \fr{m_1 m_2}{|\xi_1|}-\fr{m_1 m_3}{|\xi_2|}-\fr{m_2 m_3}{|\xi_1-\xi_2|} .$$
\end{example}

\begin{example}[Jacobi Coordinates]
Alternatively one can introduce Jacobi coordinates $\xi_1,\xi_2$ by setting
$$Q_{12}= -\xi_1 \qquad Q_{31}= \xi_2 + \nu_2\xi_1 \qquad Q_{23} = -\xi_2+\nu_1\xi_1\qquad \nu_i = \fr{m_i}{m_1+m_2}.$$
This corresponds to  the orthogonal basis
$$e_1 = (-1,\nu_2,\nu_1)\qquad e_2 = (0,1,-1)$$
and we have mass metric
$$G = \m{\mu_1&0\cr 0&\mu_2}\text{ where }
\mu_1 = \fr{m_1 m_2}{m_1+m_2}\qquad \mu_2 =   \fr{(m_1+ m_2)m_3}{m}.$$
The momenta satisfy
$$\eta_1 = -P_{12}+\nu_2 P_{31}+\nu_1 P_{23}\qquad \eta_2 = P_{31}-P_{23}$$
and for an inverse we could choose
$$P_{12}=0\qquad P_{31}= \eta_1+\nu_1\eta_2\qquad P_{23}= \eta_1-\nu_2 \eta_2.$$
From (\ref{eq_parametrizedH})  we get the  equally familiar Hamiltonian
$$H(\xi,\eta) = \fr{|\eta_1|^2}{2\mu_1}+ \fr{|\eta_2|^2}{2\mu_2} - 
  \fr{m_1 m_2}{|\xi_1|}-\fr{m_1 m_3}{|\xi_2 + \nu_2\xi_1|}-\fr{m_2 m_3}{|\xi_2-\nu_1\xi_1|}.$$
\end{example}

\section{Spherical-Homogeneous coordinates}\label{sec_spherical}
The Hamiltonian $H_{rel}(Q,P)$ of (\ref{eq_HamiltonianPQ3bp}), representing the translation-reduced planar three-body problem, has further symmetries.   The potential function $U(Q)$ is symmetric under simultaneous rotation of  the $Q_{ij}$ in $\C$ and is also homogeneous of degree $-1$ with respect to scaling.  In this section we exploit the
scaling symmetry by converting the system to spherical coordinates.  This will be useful later when we blow-up the triple collision singularity.

We use the mass norm $r= |Q|$ as a measure of the size of a configuration $Q= (Q_{12},Q_{31},Q_{23})\in\C^3$.   In particular, $r=0$ represent triple collision.  For $Q\in\C^3_0$ we want to define a spherical variable $X \in\S^5$
to describe the normalized configuration.   However, instead of using the unit sphere 
$\S^5=\{X\in\C^3: |X|=1\}$ we will view the sphere as the quotient space of $\C^3_0$ under scaling by positive real numbers.   This gives a convenient way to work globally on $\S^5$.  We will take a similar approach when working with the complex projective space $\CP^2$ in the next section.

Let $M = T^*\C^3_0 \simeq \C^3_0\times \C^{3*}$ with the standard symplectic structure.
Let $G=\R^+$ be the group of positive real numbers and let $G$ act on $M$ by $k\cdot (X,Y) = (kX, Y/k)$ where
$X\in\C^3_0, Y\in\C^{3*}, k>0$.  We will use the notation $[X], [X,Y]$ to denote equivalence classes under scaling.  In other words, two vectors $X, X'\in\C^3_0$ are equivalent,  denoted $X'\sim X$ if $X' = kX$ for some $k>0$.  Similarly
$(X',Y')\sim (X,Y)$ if $X'= kX, Y' = Y/k$ for some $k>0$.

The momentum map for this group action is given by $S(X,Y) =\re \metric{Y,X}$ where the angle bracket denotes the Hermitian evaluation pairing (\ref{eq_pairing}).  Fixing this scaling-momentum to be
$\re\metric{Y,X} = 0$ and passing to the quotient space we get a reduced symplectic manifold which can be identified with the cotangent bundle $T^*\S^5$.  This is a special case of  cotangent bundle reduction at zero momentum, as described in theorem ~\ref{th_cotangentreduction0}.   Introduce the notation
$$T^*_{sph}\C^3 =S^{-1}(0) =  \{(X,Y)\in T^*\C^3_0:\re\metric{Y,X} = 0\}.$$
Then we have
$$T_{sph}\C^3/\R^+ \simeq T^*\S^5.$$

We are going to pass from the relative configuration variable $Q\in \C^3_0$ to a size variable $r$ and a homogeneous variable $X\in\C^3_0$.  
\begin{definition}  $(r,X) \in \R^+ \times \C^3_0$ are 
spherical-homogeneous coordinates for the configuration $Q \in \C^3_0$ 
provided $r=|Q|$ and $[X]=[Q]$.  
\end{definition}

$X$ will be defined only up to a positive real factor and will be viewed as representing a point of $\S^5$.   We can use $Q$ itself as a homogeneous representative of the corresponding point in $\S^5$.  Hence we define 
a spherical-homogeneous  coordinate map
$$f : \C^3_0\into \R^+ \times \C^3_0 \qquad r= |Q|, \, X= Q.$$
Extend $f(Q)$ to a map $F(Q,P)$, $F:T^*\C^3_0 \into T^*\R^+ \times T^*_{sph}\C^3$ by setting 
$$
F: \qquad p_r = \fr{\re\metric{P,Q}}{|Q|}\qquad Y = P - \fr{\re\metric{P,Q}}{|Q|^2}Q^*$$
Here $p_r\in\R^*, Y\in \C^{3*}$ are the conjugate momentum variables to $r, X$ and $Q^*$ is the dual covector to $Q$ with respect to the mass metric.  By definition, this means the unique covector in $\C^{3*}$ such that
$$\metric{Q^*,V} = \metric{Q,V}$$
where the first angle bracket is the evaluation pairing and the second is the mass metric.  We find
\begin{equation}\label{eq_Qstar}
Q^* = \fr1m(m_1m_2 Q_{12},m_1m_3 Q_{31},m_2m_3 Q_{23}) \in\C^{3*}.
\end{equation}

A pseudo-inverse $G(r,p_r,X,Y)$, $G:T^*\R^+ \times T^*_{sph}\C^3\into T^*\C^3_0$ to $F$  is given by
\begin{equation}\label{eq_sphericalG}
G:\qquad Q = \fr{rX}{|X|}\qquad  P =  \fr{p_r}{|X|}X^* +  \fr{|X|}{r} Y.
\end{equation}
We have $G\circ F = \id$ and
$$F\circ G(r,p_r,X,Y) = (r,p_r, kX, Y/k)\quad\text{where }k= \fr{r}{|X|}.$$
Hence $f\circ G = \id \bmod \,\R^+$.

To check that $F,G$ are partially symplectic, compute the pull-backs of the canonical one-forms 
\begin{equation}\label{eq_canonicaltheta}
\t = p_r\,dr+ \re(\bar Y_{12}\,dX_{12}+\bar Y_{31}\,dX_{31}+\bar Y_{23}\,dX_{23})
\end{equation}
and $\Theta$ from (\ref{eq_canonicaloneforms}).  We find
$G^*\t = \Theta$ while $F^*\Theta = \t +\ldots$ where the omitted terms are divisible by $\re\metric{Y,X}$.  Hence the maps preserve the restricted symplectic forms as required.

The spherical-homogeneous Hamiltonian  is $H_{sph} = H_{rel}\circ G$.  
Using the formula for $Q$ in (\ref{eq_sphericalG}), the potential 
$U(Q)$ becomes  $U_{sph}(r,X) = \fr1r V(X)$ where
\begin{equation}\label{eq_shapepotential}
V(X) = |X|\,U(X) = |X|\left( \fr{m_1m_2}{|X_{12}|} +  \fr{m_3m_1}{|X_{31}|} + \fr{m_2m_3}{|X_{23}|} \right).
\end{equation}
Note that $V$ is invariant with respect to scaling of $X$ so it determines a well-defined function, $V:\S^5\into \R$, 
which we will sometimes write as $V([X])$.  

The kinetic energy is $K_{sph} = K(P)$ where $P$ is given by (\ref{eq_sphericalG}).
It follows from  lemma~\ref{lemma_kinetic} that the two terms in  (\ref{eq_sphericalG}) are orthogonal with respect to the quadratic form $K$.  To see this, note that they are orthogonal with respect to the dual mass metric since
$\metric{Y,X^*} = \metric{Y,X} = 0$.  Since $X\in\cW$ we have 
$$\metric{Y\circ \pi_\cW,X^*\circ \pi_\cW} = \metric{Y\circ \pi_\cW,\pi_\cW X } =  \metric{Y,X}=0$$
so $X^*\circ \pi_\cW, Y\circ \pi_\cW$ are still orthogonal.  Evaluating $K$ separately on the two terms of 
(\ref{eq_sphericalG}) we find
\begin{equation}\label{eq_Kspherical}
K_{sph} =  \fr12 p_r^2 + \fr{|X|^2}{r^2}K(Y)
\end{equation}
and so the spherical-homogeneous Hamiltonian is 
\begin{equation}\label{eq_Hamiltonianspherical}
H_{sph}(r,p_r,X,Y) = \fr12 p_r^2 +\fr{|X|^2}{r^2} K(Y)-  \fr1r V([X]).
\end{equation}

\begin{theorem}\label{th_spherical}
The Hamiltonian flow of $H_{sph}$ on $T^*\R^+\times T^*\C^3_0$  has   invariant submanifold 
$ \{\re\metric{Y,X} = 0 \}$ and  the quotient of the restricted flow   by the scaling symmetry is equivalent to the Hamiltonian flow of $H_{rel}$ on $T^*\C^3_0$.  This submanifold contains a codimension 2   invariant submanifold  
 $\{ \re\metric{Y,X} = 0,  X_{12}+X_{31}+X_{23}=0\}$ for which  the quotient of the restricted flow by the 
 symmetry of scaling  and   translations of the $Y_{ij}$ is conjugate to the  flow of the zero total momentum three-body problem reduced by translations.
\end{theorem}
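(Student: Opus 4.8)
The plan is to establish the theorem as two successive applications of Theorem~\ref{th_tworeductions}, exactly paralleling the construction of the maps $F$ and $G$ that were already shown to be partially symplectic and pseudo-inverse to one another just above. The first assertion is essentially a restatement of what has been set up: $H_{sph}=H_{rel}\circ G$ by construction, the momentum map for the scaling action is $S(X,Y)=\re\metric{Y,X}$, and the level set $S^{-1}(0)=T^*_{sph}\C^3$ is invariant because $H_{sph}$ is scaling-invariant (being a pull-back by the $\R^+$-equivariant $G$). So first I would verify that $H_{sph}$ is genuinely $\R^+$-invariant by checking that each of the three terms in \eqref{eq_Hamiltonianspherical} is unchanged under $(X,Y)\mapsto(kX,Y/k)$: the term $\fr12 p_r^2$ is untouched, $\fr{|X|^2}{r^2}K(Y)$ is invariant because $|X|^2$ scales by $k^2$ while $K(Y)$ scales by $k^{-2}$, and $\fr1r V([X])$ is invariant because $V$ is scale-invariant. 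This invariance guarantees that the Hamiltonian vector field is tangent to $S^{-1}(0)$ and that $\R^+$-orbits map to $\R^+$-orbits, so the quotient flow is well-defined. Then Theorem~\ref{th_tworeductions}, applied to the partially symplectic equivariant pair $(F,G)$ with $H_{rel}=H_{sph}\circ F$, yields the equivalence of the quotient flow with that of $H_{rel}$ on $T^*\C^3_0$.

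For the second assertion the plan is to compose this reduction with the translation reduction of Theorem~\ref{th_translationreduction}. I would first check that the additional constraint $X_{12}+X_{31}+X_{23}=0$, i.e.\ $X\in\cW$, cuts out a submanifold of $S^{-1}(0)$ that is invariant under the $H_{sph}$-flow. The cleanest way is to observe that under the pseudo-inverse $G$ of \eqref{eq_sphericalG} the constraint $[X]=[Q]$ with $Q\in\cW$ corresponds precisely to $X\in\cW$ (since $\cW$ is a complex subspace closed under scaling), and $\cW\times\C^{3*}$ was already shown in Theorem~\ref{th_translationreduction} to be $H_{rel}$-invariant; pulling this invariance back through the equivalence just established gives invariance of $\{\re\metric{Y,X}=0,\,X\in\cW\}$ under $H_{sph}$. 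Alternatively, one argues directly that $V$ and $K(Y)$ restricted appropriately generate a vector field tangent to $\{X\in\cW\}$, using that the gradient of $U$ points along $\cW$-directions. The codimension-2 claim then follows because the single complex equation $X_{12}+X_{31}+X_{23}=0$ is two real conditions, and $\cW_0$ is transverse to the level sets of $S$.

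The final step is to quotient this doubly-constrained submanifold by the combined symmetry group: scaling of $(X,Y)$ together with the momentum-translations $Y_{ij}\mapsto Y_{ij}+c$ that constitute $G_2=\C^*$ in Theorem~\ref{th_translationreduction}. I would note that these two group actions commute (scaling acts on both $X$ and $Y$ by reciprocal positive reals, while the $\C^*$-action shifts only the $Y_{ij}$ by a constant covector), so their combination is a well-defined abelian group action and the iterated quotient is unambiguous. Performing the scaling quotient first lands us, by the first part of the theorem restricted to $\cW$, in the translation-reduced $H_{rel}$-flow on $\cW\times\C^{3*}$; then performing the $\C^*$-quotient and invoking Theorem~\ref{th_translationreduction} identifies the result with the zero-total-momentum three-body flow reduced by translations. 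Composing the two conjugacies gives the stated conjugacy.

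The main obstacle I anticipate is bookkeeping the interaction of the two symmetry groups and their momentum maps rather than any deep analytic difficulty: one must confirm that restricting to $X\in\cW$ is compatible with the scaling reduction (that the pseudo-inverse $G$ respects the subspace $\cW$ on the nose, including how $X^*$ and the projection $\pi_\cW$ behave), and that the momentum constraint $\re\metric{Y,X}=0$ together with the translation-momentum constraint $X_{12}+X_{31}+X_{23}=0$ can be imposed simultaneously without one destroying the equivariance needed for the other. In particular I expect to spend the most care verifying that the combined action is free and proper on the relevant open set $\cW_0$ (away from triple collision and from configurations where the action degenerates), so that the iterated quotient is a genuine reduced phase space and Theorem~\ref{th_tworeductions} applies at each stage.
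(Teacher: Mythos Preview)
Your overall strategy---invoke Theorem~\ref{th_tworeductions} for the first assertion using the already-constructed partially symplectic pseudo-inverses $F,G$, then compose with Theorem~\ref{th_translationreduction} for the second---is sound and close to the paper's proof. The paper differs only in packaging: rather than composing two reductions, it applies Theorem~\ref{th_tworeductions} once with the enlarged symmetry group on the $(X,Y)$ side taken to be the combined scaling-and-translation group, and with $G_1=\C^*$ acting by momentum translation on the $(Q,P)$ side. Both routes reach the same conclusion.

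There is, however, a genuine error in your justification. You assert that the scaling action $(X,Y)\mapsto(kX,Y/k)$ and the momentum-translation action $(X,Y)\mapsto(X,Y+c)$ commute, so that the combined group is abelian. They do not: scaling first and then translating gives $(kX,\,Y/k+c)$, whereas translating first and then scaling gives $(kX,\,Y/k+c/k)$. The combined group is the \emph{semidirect} product $\R^+\ltimes\C^*$ with multiplication $(k_2,c_2)\cdot(k_1,c_1)=(k_2k_1,\,c_1/k_2+c_2)$, exactly as the paper records. This does not wreck your two-step argument, because the composition of conjugacies you describe (first part of the theorem restricted to $X\in\cW$, followed by Theorem~\ref{th_translationreduction}) is valid without any appeal to commutativity: the point is that under the identification $F,G$ between $S^{-1}(0)|_{\cW}$ and $\cW\times\C^{3*}$, the $Y$-translation action corresponds to the $P$-translation action (check: for $Q\in\cW$ one has $\re\metric{c,Q}=0$, so $p_r$ is unchanged and $Y\mapsto Y+c$), and hence the second quotient is exactly the one handled by Theorem~\ref{th_translationreduction}. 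But you should drop the commutativity claim and either argue via this intertwining, or, as the paper does, treat the full semidirect product in a single application of Theorem~\ref{th_tworeductions}.
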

\begin{proof}
For the first part we apply theorem~\ref{th_tworeductions} with $M_1 = T^*\C^3_0$, $M_2 = T^*\R^+\times T^*\C^3_0$ and symmetry groups $G_1=\{\id\}$ and $G_2 = \R^+$.  The momentum level is $S(X,Y) = \re\metric{Y,X} = 0$.
It was shown above that the maps $F,G$ between $T^*\C^3_0$ and $S^{-1}(0)$ are partially symplectic pseudo-inverses.

For the second part we change the groups to be $G_1=\C^*$ and $G_2$ is a semidirect product of the scaling group $\R^+$ and the momentum translation group $\C^*$ with group multiplication
$$(k_2,c_2)\cdot (k_1,c_1) = (k_2k_1, c_1/k_2 + c_2)$$
where $(k_i,c_i)\in \R^+\times \C^*$. The momentum levels are $\{Q_{tot} = 0\}$ and $\{X_{tot} = 0, \re\metric{Y,X} = 0\}$ respectively, and these are fixed by the actions of the groups.  The maps $F,G$ restrict to maps between these level sets and the restrictions are partially symplectic pseudo-inverses.
\end{proof}

If we use the formula $K(Y) = \fr12 \bar Y^TBY$ with $B$ from (\ref{eq_Bmatrix}) we find
Hamilton's equations for $H_{sph}$  are
\begin{equation}\label{eq_sphericalHamiltonODE}
\begin{aligned}
\dot r &= p_r  \\
\dot p_r &= \fr{2|X|^2\,K(Y)}{r^3} - \fr1{r^2}V(X) \\
\dot X  &= \fr{|X|^2}{r^2}B Y\\
\dot Y &= \fr1{r}DV(X) -\fr{2K(Y)}{r^2}X.
\end{aligned}
\end{equation}

The quotient space of $T^*\R^+\times T^*_{sph}\C^3_0$ mentioned in theorem~\ref{th_spherical}  is diffeomorphic to  $T^*\R^+\times T^*\S^5$ (by simply thinking of $X,Y$ as homogeneous coordinates for $[X,Y]\in T^*\S^5$).    The quotient space of $T^*\R^+\times T^*_{sph,\cW}\C^3_0$ is diffeomorphic to $T^*\R^+\times T^*S(\cW)$ where
$S(\cW) = \cW \intersection \S^5$ is diffeomorphic to $\S^3$.  Hence the reduced space is eight-dimensional as before.
The reduced flow is just the translation-reduced three-body problem in spherical coordinates.  

At this point, instead of reducing the number of dimensions, we have actually increased it from 12 to 14.
The value of the present formulation lies in the fact that it has been put in a form where double collisions can be easily regularized and the triple collision easily blown-up without destroying the symmetry among the masses.  As in the previous section, one could explicitly realize the reduction to eight dimensions by parametrizing the subspace $\cW$. 
However we will not do this here.

\section{Reduction by rotations -- the shape sphere}\label{sec_reduction}
Next we form the quotient by  rotations.  Since we are using complex coordinates, the combined action of  scaling $Q$  by a real factor $r >0$ and rotating $Q$ by an angle $\t$  is  represented  by $Q \mapsto k Q$ where  $k = re^{i\t} \in \C_0 = \C\setminus 0$, the space of nonzero complex number.   A point in the resulting quotient space represents the size and shape of a configuration.
 
 \subsection{Radial-homogeneous coordinates}\label{sec_radialhomogeneous}
As before we will measure the size by
$$r = |Q|.$$ 
To represent the shape we project $Q \in \C^3_0$ to the quotient of  $\C^3_0 $ by the action of $\C_0$.  
This  quotient space is  the complex projective plane $\bP(\C^3) = \CP^2$.    Homogeneous coordinates will provide a way to work globally on the projective plane, just as they did for the sphere $\S^5$ in the last section.   For $X\in\C^3_0$ let $[X]\in \CP^2$  
denote the corresponding element of the projective plane, i.e., the equivalence class  of $X$ under the relation that $X\sim Q$ if $X=k Q$ for some  $k\in\C, k\ne 0$.   (Thus the square bracket will now mean a projective point rather than a spherical one.)
\begin{definition}  $(r,X)$ are a pair of radial-homogeneous coordinates for $Q \in \C^3_0$
if $r=|Q|$ and $[X]=[Q] \in \CP^2$.  
\end{definition}
$X$ is defined only up to a nonzero complex factor.  We can take $X = Q$ itself to   define 
the  radial-homogeneous coordinate map
$$f : \C^3_0\into \R^+ \times \\C^3_0 \qquad r= |Q|, \, X= Q.$$

\begin{remark} 
 Despite the fact that  spherical-homogeneous coordinates
and radial homogeneous coordinates are both denoted $(r,X)$ 
there are differences between the two coordinate systems.   Spherical-homogeneous coordinates
represent points in $\C^3_0\simeq \R^+\times \S^5$ whereas radial homogeneous coordinates 
represent points in the quotient space  $(\C^3_0)/S^1\simeq \R^+\times\CP^2$.   

 If we include the origin and form the quotient space under rotations we have
$\C^3/S^1 = Cone(\CP^2)$, the cone over $\CP^2$,  where the cone point corresponds to total collision
$0 \in \C^3$.
For any topological  space $X$, we can form the space  $Cone(X)$
which has a distinguished cone point $*$ and $Cone(X)  \setminus * = \R^+ \times X$.   In this case, the cone is not a smooth manifold.
\end{remark}

The equivalence class $[X] = [Q]\in\CP^2$ represents the shape of a three-body configuration only if  $Q\in {\cal W}$.
Restricting to such $Q$ we get   $[Q]\in \bP(\cal W)$, 
where $ \bP(\cal W)$ is  the projective space of the subspace $\cal W\subset \C^3$.  Since $\cal W$ is a two-dimensional complex subspace, 
$\bP(\cal W)$ is a projective line, i.e., 
$$\bP(\cal W)\simeq \CP^1 \simeq \S^2.$$
$\bP(\cal W)$ will be called the {\em shape sphere}.

Any function  on our original configuration space which is invariant under translation, rotation, and scaling
induces a function on the shape sphere, the most important example being our homogenized potential
$$V(X) = |X|U(X) :\bP{\cal W} \to \R.$$

We will also use homogeneous momentum variables.  A pair $(X,Y) \in  T^*\C^3_0 \simeq \C^3_0\times \C^{3*}$ will represent a point of $T^*\CP^2$.   Let $G=\C_0$ be the group of nonzero complex numbers and let $G$ act on $T^*\C^3_0$ by $k\cdot (X,Y) = (kX, Y/\bar{k})$.  We will use the notation $[X,Y]$ to denote equivalence classes under scaling.  In other words, $(X',Y')\sim (X,Y)$ if $X'= kX, Y' = Y/\bar{k}$ for some nonzero $k\in\C$.  The momentum map for this group action is given by the Hermitian evaluation pairing $\s(X,Y) = \metric{Y,X}\in\C$.  The real part of  the complex number $\s(X,Y)$ is the real scaling-momentum $S(X,Y)$ (which we want to be zero as in the last section).  On the other hand, from (\ref{eq_realimag}) we see that $\im\s(X,Y) = -i\,\mu$, where $\mu$ is the angular momentum.

If we fix the complex scaling-momentum to be $\metric{Y,X} = 0$ and pass to the quotient space, then as in theorem ~\ref{th_cotangentreduction0}  we get a reduced symplectic manifold which  is naturally identified with the cotangent bundle 
$T^*\CP^2$ with its natural symplectic structure.  Introduce the notation
$$T^*_{pr}\C^3 =\s^{-1}(0) =  \{(X,Y)\in T^*\C^3_0:\metric{Y,X} = 0\}.$$
Then we have
$$T_{pr}^* \C^3/\C_0 \simeq T^*\CP^2.$$
If, on the other hand, we fix the complex scaling-momentum to be $\metric{Y,X} = -i\,\mu$ and pass to the quotient space we still get a reduced symplectic manifold which can be identified with the cotangent bundle $T^*\CP^2$ but with a twisted symplectic structure, as described in theorem~\ref{th_cotangentreductionmu}.   More about this below.

To get a system equivalent to the reduced three-body problem we  will also need to include the radial variables.
Restrict $X$ to $\cW$ and quotient by the action of  the group $\C$ of translations in $Y$-momentum space.  Let  $M =T^*\R^+\times T^*\C^3_0$ with coordinates $(r,p_r,X,Y)$ and let $G=\C_0\times \C$ acting by $(k,c)\cdot (r,p_r,X,Y) = (r,p_r,kX,c\cdot (Y/\bar k))$ where $c\cdot Y = (Y_{12}+c,Y_{31}+c,Y_{23}+c)$.
Fixing the momentum level $J(X,Y) = (\s(X,Y),X_{tot}) = (-i\,\mu,0)\in\C^2$ and passing to the quotient space gives the reduced phase space
$$P = \{(r,p_r,X,Y): \metric{Y,X} = -i\,\mu, X_{12}+X_{31}+X_{23}=0\}/G$$
of real dimension $\dim P = 14-4-4 = 6$ as expected.  In fact we have
$$P\simeq T^*\R^+\times T^*\bP(\cW) \simeq T^*\R^+\times T^*\S^2.$$ 

We still need to find the reduced Hamiltonian and show that the reduced Hamiltonian system is equivalent to the reduced three-body problem.
This is easy to do starting from the spherical Hamiltonian in the last section.   Indeed, the passage from the homogeneous-spherical  variables $(r,p_r,X,Y)\in T^*\R^+\times T^*\C^3_0$ to the corresponding radial-homogeneous ones is just given by the identity map.  The new feature here is that the symmetry group is enlarged from  $\R^+\times \C^*\simeq \R^+\times\C$ to $\C_0 \times\C$.   Then we have the following extension of theorem~\ref{th_spherical}:
\goodbreak
\begin{theorem}\label{th_reduced1}
The Hamiltonian flow of $H_{sph}$ on $T^*\R^+\times T^*\C^3_0$  has an invariant set where $\metric{Y,X} = -i\,\mu$.  The quotient of the restricted flow by the complex scaling symmetry is equivalent to the Hamiltonian flow of $H_{rel}$ on $T^*\C^3_0/\S^1$.  There is another invariant set where $\metric{Y,X} = -i\,\mu$ and $X_{12}+X_{31}+X_{23}=0$ and the quotient of the restricted flow by the complex scaling symmetry and by translations of the $Y_{ij}$ is conjugate to the  flow of the three-body problem with zero total momentum and angular momentum $\mu$, reduced by translations and rotations.
\end{theorem}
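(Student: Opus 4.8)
The plan is to copy the proof of Theorem~\ref{th_spherical} almost verbatim, enlarging the scaling group $\R^+$ to the full complex scaling group $\C_0$, which packages scaling and rotation together. For the first assertion I would invoke Theorem~\ref{th_tworeductions} with $M_1 = T^*\C^3_0$ carrying $G_1 = \S^1$ (rotations) at the angular-momentum level $J_1^{-1}(\mu) = \{\im\metric{P,Q} = -\mu\}$, and $M_2 = T^*\R^+\times T^*\C^3_0$ carrying $G_2 = \C_0$ at the level $J_2^{-1}(-i\mu) = \{\metric{Y,X} = -i\,\mu\}$; recall that $\metric{Y,X}$ is exactly the momentum map for the $\C_0$-action. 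The maps are the same $F,G$ introduced in section~\ref{sec_spherical}, so no new formulas are needed and the intertwining $H_{rel} = H_{sph}\circ F$ is inherited from $H_{sph} = H_{rel}\circ G$ together with $G\circ F = \id$.

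The only genuinely new computation is the behaviour of the complex pairing under $F$. Writing $X = Q$ and $Y = P - \fr{\re\metric{P,Q}}{|Q|^2}Q^*$ and using $\metric{Q^*,Q}=\metric{Q,Q}=|Q|^2$, I get
\[
\metric{Y,X} \;=\; \metric{P,Q} - \re\metric{P,Q} \;=\; i\,\im\metric{P,Q}.
\]
This one identity does double duty: its vanishing real part recovers the scaling-momentum condition already used in Theorem~\ref{th_spherical}, while its imaginary part shows $F$ preserves angular momentum, hence carries $J_1^{-1}(\mu)$ into $J_2^{-1}(-i\mu)$; the companion relation $\metric{P,Q} = p_r\,r + \metric{Y,X}$ sends the levels back under $G$. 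Because $\C_0$ is abelian the value $-i\mu$ is automatically coadjoint-fixed, so $\C_0$ preserves its level set. Equivariance under the rotational part of $\C_0$ follows from $(e^{i\theta}Q)^* = e^{i\theta}Q^*$, which gives $F(e^{i\theta}Q,e^{i\theta}P) = e^{i\theta}\cdot F(Q,P)$ and the matching relation for $G$; equivariance under the $\R^+$-part and the pseudo-inverse identities $G\circ F=\id$, $F\circ G=\id\ (\bmod\ \R^+)$ were already checked. Partial symplecticity of $F,G$ was established on all of $\{\re\metric{Y,X}=0\}$, which contains our level set, so Theorem~\ref{th_tworeductions} applies and yields conjugacy of the reduced flows. (The twisting of Theorem~\ref{th_cotangentreductionmu} is only relevant when one later identifies this reduced space with $T^*\S^2$ in canonical coordinates; it is invisible to the conjugacy statement.)

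For the second assertion I would further enlarge both groups by the momentum-translation symmetry of section~\ref{sec_reductionbytranslations} and restrict to the invariant submanifold $\{X_{tot}=0\}$, i.e.\ $X\in\cW$, exactly as in the second part of Theorem~\ref{th_spherical}. Then $G_1$ becomes the semidirect product of rotations $\S^1$ with translations of the $P_{ij}$, $G_2$ the semidirect product of $\C_0$ with translations of the $Y_{ij}$, and the level sets become $\{Q_{tot}=0,\ \im\metric{P,Q}=-\mu\}$ and $\{X_{tot}=0,\ \metric{Y,X}=-i\mu\}$. I would check that these level sets are preserved by the respective groups, that $F,G$ restrict to partially symplectic pseudo-inverses between them, and that they intertwine the Hamiltonians; each such check is a combination of the translation computation in Theorem~\ref{th_translation} and the scaling/rotation computation above. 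A dimension count $14-4-4 = 6 = 12-3-3$ confirms that the two reduced phase spaces have the expected dimension.

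The main point requiring care—really the only nontrivial new obstacle—is the non-commutativity of the enlarged groups in the second part: rotations act nontrivially on the momentum-translation parameter, so $G_1$ and $G_2$ are genuine semidirect rather than direct products, and Theorem~\ref{th_tworeductions} demands that $G_i$ preserve $J_i^{-1}(\mu_i)$. This is automatic for the abelian group of the first part but must be verified here; it holds because $Q_{tot}=0$ (resp.\ $X_{tot}=0$) is preserved by rotation, while a momentum translation by $c$ changes the angular momentum only by a multiple of $Q_{tot}$ (resp.\ $X_{tot}$), which vanishes on the level set. Once this invariance is in hand, everything else reduces to the identity $\metric{Y,X}=i\,\im\metric{P,Q}$ and to computations already carried out in the preceding two sections.
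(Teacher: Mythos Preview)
Your proposal is correct and follows essentially the same route as the paper: reuse the maps $F,G$ from the proof of Theorem~\ref{th_spherical}, check that they restrict to the angular-momentum level $\metric{Y,X}=-i\mu$, and invoke Theorem~\ref{th_tworeductions} with the symmetry groups enlarged from $\R^+$ to $\C_0$ (and, for the second part, further by momentum translations). The paper's proof is a two-sentence sketch of exactly this; your version simply fills in the details, including the key identity $\metric{Y,X}=i\,\im\metric{P,Q}$ and the semidirect-product invariance check, both of which are implicit in the paper's terse statement that ``$F$ and $G$ restrict to maps of the $\mu$ angular momentum levels'' and ``are still partially symplectic pseudo-inverses.''
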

\begin{proof}
The maps $F$ and $G$ as in the proof of theorem~\ref{th_spherical} restrict to maps of the $\mu$ angular momentum levels. They are still partially symplectic pseudo-inverses.
\end{proof}

The next step is to use a momentum shift map to pull-back the problem to the zero-angular-momentum level.  This expresses all of the reduced problems on the same phase space and makes the role of the angular  momentum constant explicit.
Let
\begin{equation}\label{eq_momentumshift}
\Phi_\mu(r,p_r,X,Z) = (r,p_r,X,Y) \qquad Y= Z + \mu\Gamma(X) \qquad \Gamma(X) = \fr{i\,X^*}{|X|^2}
\end{equation}
where
$$X^* = \fr1m(m_1m_2 X_{12},m_3m_1X_{31}, m_2m_3X_{23})\in\C^{3*}.$$
Note that  $\Phi_\mu: J^{-1}(0,0)\into J^{-1}(-i\,\mu,0)$ since if $\metric{Z,X} = 0$ we have
$$\im\metric{Y,X}= \im\metric{i\mu\fr{X^*}{|X|^2},X} = -\mu\re \metric{\fr{X^*}{|X|^2},X}=-\mu.$$
Composing $H_{sph}$ with $\Phi_\mu$ we get a Hamiltonian
\begin{equation}\label{eq_Hamiltonianreduced}
H_{\mu}(r,p_r,X,Z) = \fr12(p_r^2+\fr{\mu^2}{r^2}) +\fr{|X|^2}{r^2} K(Z)-  \fr1r V([X]).
\end{equation}

To verify this we need to show that the kinetic energy can be written
\begin{equation}\label{eq_Kmu}
K_\mu = \fr12(p_r^2+\fr{\mu^2}{r^2}) +\fr{|X|^2}{r^2} K(Z).
\end{equation}
This decomposition  follows from  an orthogonality argument based on lemma~\ref{lemma_kinetic}.  Namely, the vectors 
$i\, X$ and $Z$ are orthogonal with respect to the mass metric and the first one lies in $\cW$.  Then, as in the last section, lemma~\ref{lemma_kinetic} shows that they are orthogonal with respect to the quadratic form $K$ and so $K(Y) = K(\mu\Gamma(X))+K(Z)$.  $K(\mu\Gamma(X))$ gives $\mu^2$-term in $K_\mu$.

Equation \ref{eq_Kmu} gives a decomposition of the kinetic energy into radial and  angular parts and a third term which can be viewed as the kinetic energy due to changes in the shape of the configuration. 
Some authors call this decomposition of kinetic energy, or the consequent orthogonal decomposition of velocities
the ``Saari decomposition''. (See \cite{Saari}.) In the next subsection we show how this last shape term can be understood in terms of the Fubini-Study metric on the shape sphere.

\subsection{Fubini-Study Metrics and the Shape Kinetic Energy}\label{sec_FS}
Using a complex orthogonal basis, we  give a simple decomposition of the dual mass metric which leads to deeper insights into the kinetic energy decomposition  (\ref{eq_Kmu}).  Since the shape sphere has complex dimension one, there are some very simple formulas for the shape term of this decomposition. 

To describe the Fubini-Study metric (also called the K\"ahler metric),  let $\cV$ denote any complex vector space and let $\metric{V,W}$ be any Hermitian metric on $\cV$.  If $X\in\cV_0 = \cV\setminus 0$ then the corresponding {\em Fubini-Study} metric on $T_X\cV$ is:
\begin{equation}\label{eq_FSmetric}
\metric{V,W}_{FS} = \fr {\metric{V,W}\metric{X,X}-\metric{V,X}\metric{X,W}}{\metric{X,X}^2}.
\end{equation}
As a bilinear form on $T_X \cV$, the Fubini-Study ``metric'' is degenerate with kernel the complex line spanned by the vector $X$.   But it induces a bona fide Hermitian metric on the projective space $\bP(\cV)$. 

To see this, let  $\pi:\cV_0\into \bP(\cV)$ denote the projection map: $\pi(X) =  [X]$. The tangent map
$T\pi: T\cV_0\into T\bP(\cV)$, $T\pi(X,V) = ([X],D\pi(X)V)$ has the property that
$T\pi(X,V) = T\pi(X',V')$ if and only if $X' = kX$ and $V' = kV+lX$ for some complex numbers $k\ne0, l$.  So it is natural to view the tangent bundle $T\bP(\cV)$ as the set of equivalence classes of $[X,V]$ of pairs $(X,V)\in\cV_0\times \cV$ under this equivalence relation.   It is easy to check that the formula for $\metric{.,.}_{FS}$ is invariant under this equivalence relation and so it gives a well-defined Hermitian metric on $\bP(\cV)$.  The real part $\re \metric{V,W}_{FS}$ gives a Riemannian metric on $\bP(\cW)$ and the imaginary part gives a two-form called the {\em Fubini-Study form} which will be important later
$$\Omega_{FS}(V,W) = \im\metric{V,W}_{FS}.$$

Starting with the mass metric on $\cV = \C^3$ we get a Fubini-Study metric on $\CP^2$.  However, because of lemma~\ref{lemma_kinetic}, we will be interested in its restriction to the two-dimensional complex subspace $\cW\subset \C^3$ which we denote by $\metric{.,.}_{FS,\cW}$, which induces a Hermitian metric on the shape sphere $\bP(\cW)$.   

Our goal is to show that the shape kinetic energy is the cometric   dual  to this Fubini-Study metric  on $\bP(\cW)$.
(By a `cometric' on a manifold $X$ we mean the fiberwise quadratic form on  $T^*X$   which is  dual to a Riemannian metric on $X$.)  To   this  end we  will  need to  describe cometrics  on projective   space in homogeneous coordinates.  
We continue to identify  $T^*\CP^2$ with the quotient space of  $T^*_{pr}\C^3=\{(X,Z)\in\C^3_0\times \C^{3*}:\pairing{Z,X} = 0\}$ under the complex scaling symmetry. 
 In the same spirit, the cotangent bundle $T^*\bP(\cW)$ is the quotient space ( a symplectic reduced space)
$$T^*\bP(\cW)\simeq (T^*_{pr,\cW}\C^3 _0)/\C_0\times\C$$
where
$$T^*_{pr,\cW}\C^3 _0=\{(X,Z)\in\cW \times \C^{3*}:\pairing{Z,X} = 0, X \ne 0\}$$
and where the group $\C_0\times\C$ represents the scaling symmetry and the momentum translation in $Z$-space.   
We refer to $(X,Z)$ as homogeneous coordinates   on $\bP(\cW)$.  The restriction of $Z \in   \C^{3*}$
to $\cW$ representing a covector in $T^* _{[X]} \bP(\cW)$.  
Expressed in  homogeneous coordinates  a cometric  on $\bP(\cW)$  is a function of the form 
  $Q(X,Z)$ which is quadratic  in $Z$  and invariant under the  $\C_0\times\C$ action. 
\begin{theorem}\label{thm_FSthm}
The Fubini-Study cometric $\norm{Z}^2_{FS,\cW}$ at $[X] \in \bP \cW$ is related to the kinetic energy (formula (\ref{eq_kinetic}))
by  $$\fr{1}{2} \norm{Z}^2_{FS,\cW}  =  |X|^2 K(Z)$$
\end{theorem}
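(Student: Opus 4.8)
The plan is to exploit the fact that $\bP(\cW)$ has complex dimension one, so that at each $[X]$ the tangent and cotangent spaces are one complex dimension and the whole computation reduces to evaluating everything on a single tangent direction. The natural choice is the vector $T$ from the mass-orthogonal basis $\{X,N,T\}$ of equation (\ref{eq_QNT}) (with $X$ playing the role of $Q$). Since $\cW = \C X\oplus\C T$ is a mass-orthogonal splitting, the projection $\pi:\cW_0\into\bP(\cW)$ identifies $T_{[X]}\bP(\cW)$ with the complex line $\C T$, so that $T$ is a convenient representative spanning the tangent space, and a covector $Z$ subject to $\metric{Z,X}=0$ pairs with it through $\metric{Z,T}$.

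First I would compute the Fubini--Study metric on $T$. Substituting $V=W=T$ into (\ref{eq_FSmetric}), restricted to $\cW$, and using the orthogonality $\metric{T,X}=0$ collapses the numerator to a single term, giving $\metric{T,T}_{FS,\cW} = \metric{T,T}/\metric{X,X} = \norm{T}^2/\norm{X}^2$. To pass to the cometric I would use the real orthogonal basis $\{T,\,iT\}$ of the real tangent space: both have real Fubini--Study length-squared $\norm{T}^2/\norm{X}^2$, while $\re\metric{T,iT}_{FS,\cW}=0$. A covector acts by $\re\metric{Z,\cdot}$, and since $\metric{Z,iT}=i\,\metric{Z,T}$ the contributions of the two basis directions recombine, by the standard orthogonal-basis formula for a dual quadratic form, into
$$\norm{Z}^2_{FS,\cW} = \norm{X}^2\,\fr{|\metric{Z,T}|^2}{\norm{T}^2}.$$
This is the one step where the complex bookkeeping must be handled with care, and it is really the content of the theorem.

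Finally I would invoke Lemma~\ref{lemma_kinetic} with $Q=X$. The homogeneous coordinates $(X,Z)$ for $T^*\bP(\cW)$ satisfy exactly the constraint $\metric{Z,X}=0$, so the first term of (\ref{eq_kinetic}) drops out and $K(Z)=\tfrac12\,|\metric{Z,T}|^2/\norm{T}^2$. Comparing this with the displayed cometric formula yields $\tfrac12\norm{Z}^2_{FS,\cW}=\norm{X}^2 K(Z)$ at once. The main obstacle is just the cometric computation of the middle paragraph; everything else is direct substitution into (\ref{eq_QNT}), (\ref{eq_FSmetric}) and the lemma. As a consistency check I would also confirm that the right-hand side is invariant under the $\C_0\times\C$ action: under scaling $X\mapsto kX,\ Z\mapsto Z/\bar k$ one has $T\mapsto\bar k\,T$ (from the conjugates in (\ref{eq_QNT})), and under the momentum translation $Z$ shifts by an element of $\ker L^*$, which pairs trivially with $T\in\cW$; hence the expression descends to a genuine cometric on $\bP(\cW)$.
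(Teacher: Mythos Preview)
Your proposal is correct and follows essentially the same route as the paper: both arguments use the vector $T$ from (\ref{eq_QNT}) to span the one-complex-dimensional tangent space, compute that $T$ has Fubini--Study norm-squared $|T|^2/|X|^2$, deduce the cometric formula $\norm{Z}^2_{FS,\cW}=|X|^2|\pairing{Z,T}|^2/|T|^2$, and then compare with Lemma~\ref{lemma_kinetic} under the constraint $\pairing{Z,X}=0$. The paper packages the cometric computation into a separate lemma (Lemma~\ref{lemma_FSlemma}, introducing the unit vector $e(X)=\tfrac{|X|}{|T|}T$), whereas you carry it out inline via the real orthogonal basis $\{T,iT\}$, but the content is the same.
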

\begin{proof}
Substitute $(X,Z)$ for $(Q,P)$ 
in formula (\ref{eq_kinetic}).  Use $\metric{Z,X} =0$ to get
$K(Z) = \fr{1}{2 |T|^2}\metric{Z,T}$.  The  vector field  $T(X)$ appearing in that formula   is tangent to $\cW$ and orthogonal to $X$, 
hence fits the hypothesis of  lemma \ref{lemma_FSlemma} immediately below.  The lemma asserts that with 
$e(X) = \fr{|X|}{|T(X)|}T(X)$ we have   
$\norm{Z}^2_{FS,\cW} = |\pairing{Z,e (X)}|^2$. 
\end{proof}

\begin{lemma}\label{lemma_FSlemma}
Let $T(X), X\in\cW_0$ be a nonzero complex vectorfield tangent to $\cW _0$ and normal to $X$ with respect to the Hermitian metric mass metric.   Then  $e(X) = \fr{|X|}{|T(X)|}T(X)$
 is a {\em unit} tangent vectorfield on $\cW _0$ with respect to the pulled back Fubini-Study metric $\metric{.,.}_{FS,\cW}$.
 Moreover
 \begin{equation}\label{eq_FSwithT}
\metric{V,W}_{FS,\cW}  = \fr{\metric{V,e(X)}\metric{e(X),W}}{|X|^4}\qquad V,W\in \cW/(\C X) \cong T_{[X]} \bP (\cW) , 
\end{equation}
and the pulled-back cometric is given by the  quadratic form
 \begin{equation}\label{eq_dualFSwithT}
\norm{Z}^2_{FS,\cW} = |\pairing{Z,e (X)}|^2\qquad Z\in T^*_{X,pr}\C^3.
\end{equation} 
\end{lemma}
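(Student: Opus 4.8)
The plan is to exploit that $\cW$ is two-complex-dimensional and that $\{X, e(X)\}$ is a mass-orthogonal basis of it, so that the Fubini--Study metric---whose kernel is the complex line $\C X$---is pinned down by its single value on $e(X)$. First I would record the basic facts about $e(X)$. Since $T(X)$ is normal to $X$, linearity of the metric in its second slot gives $\metric{X,e(X)} = \fr{|X|}{|T(X)|}\metric{X,T(X)} = 0$, and $|e(X)|^2 = \fr{|X|^2}{|T(X)|^2}|T(X)|^2 = |X|^2$. As $X$ and $e(X)$ are nonzero and orthogonal, they span $\cW$ and $[e(X)]$ spans the quotient line $\cW/(\C X)\cong T_{[X]}\bP(\cW)$. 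Substituting $V=W=e(X)$ into the defining formula (\ref{eq_FSmetric}) restricted to $\cW$ kills the cross term $\metric{e(X),X}\metric{X,e(X)}$ (because $\metric{X,e(X)}=0$) and leaves $\metric{e(X),e(X)}_{FS,\cW} = \fr{|e(X)|^2\,|X|^2}{|X|^4} = 1$, which is the first assertion.

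For the metric formula (\ref{eq_FSwithT}) I would expand arbitrary $V,W\in\cW$ in the orthogonal basis, writing $V = \fr{\metric{X,V}}{|X|^2}X + \fr{\metric{e(X),V}}{|X|^2}e(X)$ and similarly for $W$. Substituting into (\ref{eq_FSmetric}) and using orthogonality, the $X$-components in $\metric{V,W}\metric{X,X}$ cancel against the subtracted term $\metric{V,X}\metric{X,W}$, leaving exactly $\metric{V,e(X)}\metric{e(X),W}$ in the numerator over $|X|^4$. Equivalently, both sides of (\ref{eq_FSwithT}) are sesquilinear forms on the one-dimensional quotient---each is unchanged when a multiple of $X$ is added to $V$ or $W$, since $\metric{X,e(X)}=0$---so it suffices to check agreement on the spanning vector $e(X)$, where both sides equal $1$.

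Finally, for the cometric (\ref{eq_dualFSwithT}) I would invoke the dual-norm characterization already used in Lemma~\ref{lemma_kinetic}: relative to an orthogonal basis the dual norm of a covector $Z$ is $\sum_i |\pairing{Z,e_i}|^2/|e_i|^2$. Here the relevant space is the one-complex-dimensional $T_{[X]}\bP(\cW)$ carrying the single FS-orthonormal vector $e(X)$, so $\norm{Z}^2_{FS,\cW} = |\pairing{Z,e(X)}|^2/|e(X)|^2_{FS,\cW} = |\pairing{Z,e(X)}|^2$ by the first assertion. The one genuinely delicate point is this passage from the Fubini--Study metric to its cometric: one must keep the Hermitian conventions straight (the pairing is anti-linear in the first slot and linear in the second) and confirm that the evaluation pairing $\pairing{Z,e(X)}$ really computes the action of the descended covector on $T_{[X]}\bP(\cW)$---which uses both $\pairing{Z,X}=0$, so that $Z|_\cW$ annihilates $X$ and descends to the quotient, and $\metric{X,e(X)}=0$. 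Everything else is a short orthogonal-basis computation.
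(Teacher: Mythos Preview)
Your proposal is correct and follows essentially the same path as the paper: verify $e(X)$ is FS-unit by plugging into (\ref{eq_FSmetric}) using $\metric{X,T(X)}=0$, then expand $V,W$ in the mass-orthogonal basis $\{X,e(X)\}$ and use that $X$ lies in the kernel of the FS form, and finally read off the cometric as $|\pairing{Z,e}|^2$ because the quotient is a one-complex-dimensional Hermitian space with unit vector $e(X)$. Your extra remark on why $\pairing{Z,e(X)}$ represents the descended covector (using $\pairing{Z,X}=0$ and $\metric{X,e(X)}=0$) is a welcome clarification, but the overall argument is the same as the paper's.
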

\begin{proof}
Since $T(X)$ is orthogonal to $X$,  (\ref{eq_FSmetric}) gives
$$|T|^2_{FS} = \fr{|T|^2}{|X|^2}$$
and so $e(X)$ is a Fubini-Study unit vector at $X$.  

The tangent space $T_X\cW$ has complex dimension two and $\{X,e (X)\}$ is a basis.  If we expand $V\in T_X\cW$ as
$$V= \fr{\metric{V,X}}{|X|^2}X + \fr{\metric{V,T(X)}}{|T(X)|^2}T(X)$$
and similarly for $W$, then since $X$ is in the kernel  of $\metric{.,.}_{FS}$  we get
$$\metric{V,W}_{FS,\cW} = \metric{V,W}_{FS}  = \fr{\metric{V,T(X)}\metric{T(X),W}}{|X|^2|T(X)|^2} = \fr{\metric{V,e(X)}\metric{e(X),W}}{|X|^4}$$
as claimed.

Observe that  if ${\mathbb E}, \metric{ \cdot, \cdot} $ is a one-dimensional 
complex Hermitian vector space with unit vector $e$
then the cometric on ${\mathbb E}^*$ is  given by the quadratic form $Z \in {\mathbb E}^*  \mapsto |\pairing{Z,e}|^2$.
From this observation the last formula of the lemma follows.
\end{proof}

\begin{remark}  The manifold $\bP (\cW)$, being a two-sphere, admits no non-vanishing vector field.
So how did we just construct a unit vector field  $e(X)$ to  this two-sphere?  We did not! 
The gadget $e(X)$  is a unit section
of the pull-back  $f^* T \bP (\cW)$ of this tangent bundle by the homogenization map $f: \cW_0 \to \bP (\cW)$
which sends $X \to [X]$. This pull-back bundle can be viewed as  a sub-bundle of $T \cW_0$, and
hence $e(X)$ is a vector field on $\cW_0$.
\end{remark}

Using the vector field $T(X)$ of formula (\ref{eq_kinetic}) (with $X$ substituted for $Q$,
we obtain the  Fubini-Study unit tangent vector 
$$e(X) = \sqrt{\fr{m_1m_2m_3}{m}}\,\left(\fr{\bar X_{31}}{m_2}-\fr{\bar X_{23}}{m_1},\fr{\bar X_{23}}{m_1}-\fr{\bar X_{12}}{m_3},\fr{\bar X_{12}}{m_3}-\fr{\bar X_{31}}{m_2}\right).$$
From this expression we get   simple formulas for the Fubini-Study metric and two-form on $\cW$:
\begin{equation}\label{eq_FStensorX}
\metric{.,.}_{FS,\cW} =\fr{m_1m_2m_3}{m|X|^4}\,\bar\s \tensor \s\qquad \Omega_{FS,\cW} =\fr{m_1m_2m_3}{m|X|^4}\,\im \bar\s \tensor \s
\end{equation}
where the complex-valued one-form $\sigma$ is given by any of the following formulas
\begin{equation}\label{eq_sigma}
\sigma =\metric{e,dX} = X_{31}dX_{12}- X_{12}dX_{31} =X_{12}dX_{23}- X_{23}dX_{12}=X_{23}dX_{31}-X_{31}dX_{23}.
\end{equation}
For example, the second formula for $\s$ is obtained by eliminating  $X_{23}, dX_{23}$ from $\metric{e,dX}$ using the equations $X_{23} = -X_{12}-X_{31}$ and $dX_{23} = -dX_{12}-dX_{31}.$
Note that the formulas for $\s$ are independent of the masses.  This implies that the Fubini-Study metrics for different masses are all conformal to one another. 

Similarly we get a formula for the dual norm and the shape kinetic energy:
\begin{equation}\label{eq_FSdualalpha}
|X|^2K(Z) = \fr12\norm{Z}^2_{FS,\cW} =\fr{m |\a(Z)|^2}{2m_1m_2m_3}
\end{equation}
where $\a(Z)$ is given by any of the following formulas
\begin{equation}\label{eq_alpha}
\begin{aligned}
\a &=\fr1m\left(m_1m_2X_{12}(Z_{23}-Z_{31})+m_3m_1X_{31}(Z_{12}-Z_{23})+m_2m_3X_{23}(Z_{31}-Z_{12})\right)\\
&= \fr{|X|^2(Z_{31}-Z_{12})}{\bar X_{23}}=  \fr{|X|^2(Z_{12}-Z_{23})}{\bar X_{31}}=  \fr{|X|^2(Z_{23}-Z_{31})}{\bar X_{12}}.
\end{aligned}
\end{equation}

Our identification of the shape kinetic energy with the  Fubini-Study cometric gives an alternative formula for the reduced Hamiltonian
on $T^*_{pr}\C^3$
\begin{equation}\label{eq_HmuFS}
H_\mu(r,p_r,X,Z) =  \fr12(p_r^2+\fr{\mu^2}{r^2})+\fr{1}{2r^2}\norm{Z}^2_{FS,\cW} - \fr1r V(X).
\end{equation}
where $\norm{Z}^2_{FS,\cW}$ is the  Fubini-Study cometric on $\cW$.

\subsection{Induced symplectic structure and the reduced differential equations}
Using the momentum shift map, we have pulled back the Hamiltonian to the reduced Hamiltonian $H_\mu$ defined on the zero-angular momentum level $T^*\R^+\times T^*_{pr}\C^3$ where
$$T^*_{pr}\C^3 =\{(X,Z)\in T^*\C^{3}: \metric{Z,X}=0\}.$$ 
However, as described in theorem~\ref{th_cotangentreductionmu}, there is also an induced symplectic structure on this set which different from the restriction of the standard one. 
The pull-back of the canonical one form $\theta$ under the momentum shift map (\ref{eq_momentumshift}) is
$$\Phi_{\mu}^*\theta = p_r\,dr + \re \metric{Z,dX} + \fr{\mu}{|X|^2}\im\metric{X^*,dX} = \Theta + \mu\Theta_1$$
with
$$\Theta_1 = \im\fr{ \metric{X^*,dX}}{|X|^2}= \im\fr{\metric{X,dX}}{|X|^2}$$
where we changed the evaluation pairing to the mass metric in the second equation.
The modified symplectic form will be
$$\Omega_\mu = \Omega - \mu d\Theta_1$$
where we find
\begin{equation}\label{eq_FSprime}
d\Theta_1 = 2\im\fr{\metric{dX,dX}|X|^2-\metric{dX,X}\metric{X,dX}}{|X|^4} = 2\Omega'_{FS}
\end{equation}
where $\Omega'_{FS}$ is the Fubini-Study two-form determined by the mass metric on $\C^3$ (as opposed to its restriction to $\cW$ as in section~\ref{sec_FS}.   Geometrically, $\Omega'_{FS}$ represent the curvature of the circle bundle $\S^5\into \CP^2$.

Once we have $\Omega_\mu$ we calculate Hamilton's differential equations using the defining equation for Hamiltonian vectorfields:
\begin{equation}\label{eq_generalHamode}
(\dot r,\dot p_r,\dot X,\dot Z)\interl \Omega_\mu = dH_\mu.
\end{equation}
The interior product with the standard form gives the usual result:
$$(\dot r,\dot p_r,\dot X,\dot Z)\interl \Omega = -\dot p_r\,dr+  \dot r\,dp_r -\re \langle \dot Z,dX\rangle + \re\langle \dot X, dZ \rangle.$$
Since $\Omega'_{FS}$ involves only  $dX$, it can be viewed as a two-form on $C^3$ instead of on phase space.  Moreover, it only affects the differential equations for $\dot  Z$.  Hamilton's equations read:
\begin{equation}\label{eq_reducedODE1}
\begin{aligned} 
\dot r &=  H_{\mu,p_r} \\
\dot p_r &= -H_{\mu,r}\\
\dot X  &= H_{\mu,Z}\\
\dot Z &= -H_{\mu,X} - 2\mu H_{\mu,Z} \interl  \Omega'_{FS}
\end{aligned}
\end{equation}
where $H_\mu$ is given by (\ref{eq_Hamiltonianreduced}).   The term involving the Fubini-Study metric will be called the {\em curvature term}, $T'_{curv}= -2\mu H_{\mu,Z} \interl  \Omega'_{FS}$.

\begin{lemma}\label{lemma_curvatureterm}
If $X\in\cW$ and $\pairing{Z,X}=0$, then the vector $H_{\mu,Z}$ is in $\cW$ and $\metric{X,H_{\mu,Z}} = 0$.  In fact
\begin{equation}\label{eq_Hmuz}
H_{\mu,Z}  = \fr{\overline{\pairing{Z,e}}}{r^2}\,e \in\cW
\end{equation}
where $e(X)$ is as in lemma~\ref{lemma_FSlemma}.

The curvature term $T'_{curv}$ is equivalent under the translation symmetry in $\C^{3*}$ to 
\begin{equation}\label{eq_Tcurv}
T_{curv} =  -\fr{2\mu}{r^2}\,iZ.
\end{equation}
\end{lemma}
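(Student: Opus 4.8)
The plan is to handle the two assertions in turn, exploiting the fact that the entire $Z$-dependence of $H_\mu$ is governed by the Fubini--Study cometric.

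\emph{The gradient $H_{\mu,Z}$.} By Theorem~\ref{thm_FSthm} together with \eqref{eq_dualFSwithT} of Lemma~\ref{lemma_FSlemma}, the shape kinetic term of $H_\mu$ is
$$\frac{|X|^2}{r^2}K(Z) = \frac{1}{2r^2}\,\norm{Z}^2_{FS,\cW} = \frac{1}{2r^2}\,|\pairing{Z,e}|^2 ,$$
so $H_\mu$ depends on $Z$ only through the single complex quantity $w:=\pairing{Z,e}$. I would then read off the complex gradient $H_{\mu,Z}$ from its defining property $dH_\mu(\delta Z)=\re\pairing{\delta Z,\,H_{\mu,Z}}$, which encodes the real-partial-derivative convention of the paper. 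Since $\delta w=\pairing{\delta Z,e}$, differentiating $\tfrac{1}{2r^2}|w|^2$ yields $\tfrac{1}{r^2}\re\bigl(\bar w\,\pairing{\delta Z,e}\bigr)=\re\pairing{\delta Z,\,\tfrac{\bar w}{r^2}e}$, whence $H_{\mu,Z}=\tfrac{\overline{\pairing{Z,e}}}{r^2}\,e$, which is \eqref{eq_Hmuz}. As $e=e(X)$ is tangent to $\cW$ and mass-orthogonal to $X$ by Lemma~\ref{lemma_FSlemma}, the scalar multiple $H_{\mu,Z}$ lies in $\cW$ and satisfies $\metric{X,H_{\mu,Z}}=0$.

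\emph{The interior product.} Next I would contract $V:=H_{\mu,Z}$ into the Fubini--Study two-form $\Omega'_{FS}$ of \eqref{eq_FSprime}, using $(V\interl\Omega'_{FS})(W)=\im\metric{V,W}_{FS}$. Because $\metric{V,X}=0$, the second term of \eqref{eq_FSmetric} drops out and $\metric{V,W}_{FS}=\metric{V,W}/|X|^2$. Writing $\metric{e,W}=\pairing{e^*,W}$ for the mass-dual covector $e^*$ (defined by $\pairing{e^*,W}=\metric{e,W}$, exactly as $Q^*$ in \eqref{eq_Qstar}) and using $\im z=\re(-iz)$, I would identify the real one-form $V\interl\Omega'_{FS}$ with the complex covector $\tfrac{i\,\overline{\pairing{Z,e}}}{r^2|X|^2}\,e^*$. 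Hence
$$T'_{curv}=-2\mu\,(V\interl\Omega'_{FS})=\frac{-2\mu i\,\overline{\pairing{Z,e}}}{r^2|X|^2}\,e^* .$$

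\emph{Equivalence modulo translation.} Two covectors are equivalent under the $Y$-translation symmetry precisely when they differ by an element of $\ker L^*=\{(c,c,c)\}$, and such elements annihilate $\cW$; so it suffices to verify that $T'_{curv}$ and $T_{curv}=-\tfrac{2\mu}{r^2}iZ$ restrict to the same functional on $\cW$. Using the mass-orthogonal basis $\{X,e\}$ of $\cW$, I would evaluate both covectors on $X$ and on $e$: the pairings with $X$ both vanish (by $\pairing{Z,X}=0$ and $\metric{e,X}=0$), while $\pairing{T_{curv},e}=\tfrac{2\mu i}{r^2}\pairing{Z,e}$ and $\pairing{T'_{curv},e}=\tfrac{2\mu i\,\pairing{Z,e}}{r^2|X|^2}\metric{e,e}=\tfrac{2\mu i}{r^2}\pairing{Z,e}$ agree because $\metric{e,e}=|X|^2$. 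Thus the restrictions coincide and $T'_{curv}\equiv T_{curv}$, establishing \eqref{eq_Tcurv}. The two Fubini--Study formulas are already in hand, so those steps are routine; the one genuinely error-prone point is the middle paragraph, where converting the real one-form $V\interl\Omega'_{FS}$ into a complex covector forces several conjugations from the antilinear-in-the-first-slot conventions and requires careful bookkeeping of the mass-dual $e^*$. Once that covector is pinned down correctly, the final orthogonality check is immediate.
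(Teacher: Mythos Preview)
Your argument for the curvature-term equivalence is correct and close in spirit to the paper's, though organized differently: you compute the explicit complex covector representing $H_{\mu,Z}\interl\Omega'_{FS}$ and then verify agreement with $-\tfrac{2\mu}{r^2}iZ$ on the basis $\{X,e\}$ of $\cW$, whereas the paper first proves the duality relation $\metric{H_{\mu,Z},V}_{FS}=\tfrac{1}{r^2}\pairing{Z,V}$ for all $V\in\cW$ and reads off the equivalence directly. Both routes work, and your bookkeeping of the conjugations and the fact $\metric{e,e}=|X|^2$ is right.

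There is, however, a gap in your derivation of \eqref{eq_Hmuz}. The identity $|X|^2K(Z)=\tfrac12|\pairing{Z,e}|^2$ you invoke from Theorem~\ref{thm_FSthm} and Lemma~\ref{lemma_FSlemma} holds \emph{only} on the constraint $\pairing{Z,X}=0$; off that surface Lemma~\ref{lemma_kinetic} gives the unconstrained formula $|X|^2K(Z)=\tfrac12|\pairing{Z,X}|^2+\tfrac12|\pairing{Z,e}|^2$. Since $H_{\mu,Z}$ in Hamilton's equations \eqref{eq_reducedODE1} is the full $Z$-gradient of $H_\mu$ as a function on all of $\C^{3*}$, differentiating the constrained formula determines $H_{\mu,Z}$ only modulo the complex line $\C X$: you learn its pairing with every $\delta Z$ satisfying $\pairing{\delta Z,X}=0$, but not with $\delta Z=X^*$. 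The paper closes this gap by checking separately that both candidate gradients vanish on $\delta Z=X^*$; alternatively you could simply differentiate the unconstrained formula above and observe that the extra $\tfrac{1}{r^2}\overline{\pairing{Z,X}}\,X$ term vanishes at points where $\pairing{Z,X}=0$. Either fix is quick, but without it your assertion that ``$H_\mu$ depends on $Z$ only through $w$'' is false as stated, and the formula \eqref{eq_Hmuz} is not yet established.
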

\begin{proof}
From (\ref{eq_Hamiltonianreduced}) we have $H_{\mu,Z} = \fr{|X|^2}{r^2}DK(Z)$.  Note that since $Z\in\C^{3*}$, we have $DK(Z):\C^{3*}\into\R$. By duality we can view $DK(Z)$ as a vector in $\C^3$.  Let $X\in\cW$.  Since $\dot X = H_{\mu,Z}$ and $\cW$ is invariant, we must have $H_{\mu,Z}\in\cW$.   If $\pairing{Z,X}=0$ then an orthogonality argument as above shows
$K(Z+X^*) = K(Z)+K(X^*)$ which implies, since $K$ is a quadratic form, that $DK(Z)(X^*) =\metric{DK(Z),X}=0$ as required.

In section~\ref{sec_FS} we showed that in the subspace $\{Z:\pairing{Z,X}=0\}$ we have
$|X|^2K(Z) =  \fr1{2}|\pairing{Z,e}|^2.$
In fact, we will see that the $Z$-derivatives of these two functions also agree:
\begin{equation}\label{eq_equalderivs}
|X|^2 DK(Z) =\overline{\pairing{Z,e}}\,e.
\end{equation}
To see that (\ref{eq_equalderivs}) indeed holds, note that differentiation along the subspace shows that they must agree when evaluated on any  $\delta Z$ with $\pairing{\delta Z,X}=0$.  On the other hand, both sides vanish on the complementary covector $Z' = X^*$.
Note that the right hand side was calculated, as always, by converting to real variables, finding the real derivative and then converting back to a complex vector.  Equivalently, we expand
$$\fr1{2}|\pairing{Z+\delta Z,e}|^2 = \fr1{2}|\pairing{Z,e}|^2 + \re\pairing{\delta Z,\overline{\pairing{Z,e}}\,e}+\ldots$$
for all $\delta Z$, showing that the vector in question is the complex representative of the real vector derivative.

To show the equivalence of $T'_{curv}$ and $T_{curv}$ we will show that they agree when restricted to $\cW$.  
The argument can be based on a kind of Fubini-Study duality.  Namely, if  $V\in\cW$ we will show that
\begin{equation}\label{eq_derivFSdual}
\metric{H_{\mu,Z},V}_{FS} = \fr1{r^2}\pairing{Z,V}
\end{equation}
which means that  $r^2 H_{\mu,Z}$ is a dual vector to $Z$ with respect to the Fubini-Study metric on $\cW$.  To see this note that (\ref{eq_equalderivs}) gives
$$\metric{H_{\mu,Z},V}_{FS} = \fr1{r^2}\fr{\metric{\overline{\pairing{Z,e}}\,e,V}}{|X|^2}=\fr{\pairing{Z,e}\metric{e,V}}{r^2|X|^2}.$$
On the other hand any $V\in\cW$ is  linear combination
$$V=\fr{\metric{X,V}}{|X|^2}X + \fr{\metric{e,V}}{|e|^2}e.$$
Since $e$ is a Fubini-Study unit vector, we have $|e|=|X|$ and so 
$$\fr1{r^2}\pairing{Z,V}=  \fr{\pairing{Z,e}\metric{e,V}}{r^2|e|^2}= \fr{\pairing{Z,e}\metric{e,V}}{r^2|X|^2}$$
and (\ref{eq_derivFSdual}) holds.  From this we can calculate that for any $V\in\cW$
$$T'_{curv}(V) = -2\mu \im\metric{H_{\mu,Z},V}_{FS} =   -\fr{2\mu}{r^2}\im\metric{Z,V}=-\fr{2\mu}{r^2}\re\metric{i\,Z,V}.$$
This means that $T'_{curv}$ and $T_{curv}$ agree as real-valued one-forms on $\cW$ as claimed.  Replacing $T'_{curv}$ by $T_{curv}$ introduces only an irrelevant translation of the momentum $Z$.
\end{proof}

Taking this lemma into account we finally get Hamilton's equations for the reduced Hamiltonian in the form
\begin{equation}\label{eq_reducedHamiltonODE}
\begin{aligned} 
\dot r &= p_r  \\
\dot p_r &= \fr{\mu^2+|X|^2\,2K(Z)}{r^3} - \fr1{r^2}V(X) \\
\dot X  &= \fr{|X|^2}{r^2}DK(Z)\\
\dot Z &= \fr1{r}DV(X) -\fr{2K(Z)}{r^2}X - \fr{2\mu}{r^2}iZ.
\end{aligned}
\end{equation}

Applying theorem~\ref{th_tworeductions} to the momentum shift map and remembering theorem~\ref{th_reduced1} we have:
\begin{theorem}\label{th_reduced2}
The Hamiltonian flow of $H_{\mu}$ on $T^*\R^+\times T^*\C^3_0$  has an invariant set $T^*\R^+\times T^*_{pr}\C^3$ where $\metric{Z,X} = 0$ with symplectic structure given by the restriction of the standard form minus $2\mu\Omega_{FS}$.  The quotient of the restricted flow by the complex scaling symmetry is equivalent to the Hamiltonian flow of $H$ on $T^*\C^3_0/\S^1$.  There is another invariant set $T^*\R^+\times T^*_{pr,\cW}\C^3$ where $\metric{Z,X} = 0$ and $X_{12}+X_{31}+X_{23}=0$ and the quotient of the restricted flow by the complex scaling symmetry and by translations of the $Z_{ij}$ is conjugate to the  flow of the three-body problem with zero total momentum and angular momentum $\mu$, reduced by translations and rotations.
\end{theorem}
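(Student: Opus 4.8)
The plan is to recognize Theorem~\ref{th_reduced2} as nothing more than Theorem~\ref{th_reduced1} transported across the momentum shift map $\Phi_\mu$ of (\ref{eq_momentumshift}), using the equivalence machinery of Theorem~\ref{th_tworeductions}. Since $H_\mu$ was \emph{defined} as $H_{sph}\circ\Phi_\mu$, the map $\Phi_\mu$ automatically intertwines the two Hamiltonians, so the only things left to verify are that $\Phi_\mu$ is an equivariant partially symplectic diffeomorphism between the relevant constraint sets, once the source is equipped with the twisted form $\Omega_\mu$.

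First I would confirm the level-set correspondence. A short computation with the pairing (\ref{eq_pairing}) --- taking care that $\metric{\cdot,\cdot}$ is antilinear in its first slot, so that $\metric{i X^*/|X|^2, X} = -i$ --- shows that $\metric{Z,X}=0$ forces $\metric{Y,X} = \metric{Z,X} - i\mu = -i\mu$, exactly the level appearing in Theorem~\ref{th_reduced1}. Thus $\Phi_\mu$ carries $T^*\R^+\times T^*_{pr}\C^3$ onto the invariant set $\{\metric{Y,X}=-i\mu\}$. It is a diffeomorphism with inverse subtracting $\mu\Gamma(X)$, and it is equivariant for complex scaling because $\Gamma(X)=iX^*/|X|^2$ is scale-invariant (a fact already used when $\Phi_\mu$ was introduced). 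Equivariance and the Hamiltonian intertwining are therefore in hand.

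The remaining hypothesis of Theorem~\ref{th_tworeductions} is partial symplecticity, and this is precisely the computation already recorded: $\Phi_\mu^*\theta = \Theta + \mu\Theta_1$ yields $\Phi_\mu^*\Omega = \Omega - \mu\,d\Theta_1 = \Omega - 2\mu\Omega'_{FS} = \Omega_\mu$ by (\ref{eq_FSprime}). Hence $\Phi_\mu$ pulls the standard form on the target back to the twisted form on the source, so it is partially symplectic as a map $(T^*_{pr}\C^3,\Omega_\mu)\to(\{\metric{Y,X}=-i\mu\},\Omega)$. Feeding this into Theorem~\ref{th_tworeductions} produces a symplectic local diffeomorphism of reduced spaces carrying $H_\mu$-orbits to $H_{sph}$-orbits, and invertibility of $\Phi_\mu$ upgrades this to a genuine conjugacy. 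Composing with Theorem~\ref{th_reduced1} gives the first assertion.

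For the second assertion I would enlarge the symmetry group by the $Z$-translations $c\cdot Z=(Z_{12}+c,Z_{31}+c,Z_{23}+c)$ and restrict to $X_{12}+X_{31}+X_{23}=0$. The one genuinely new point to check is that $\Phi_\mu$ stays equivariant for these translations: since the shift $\mu\Gamma(X)$ depends only on $X$, one has $\Phi_\mu(X,Z+c) = (X,Z+c+\mu\Gamma(X)) = c\cdot\Phi_\mu(X,Z)$, so translations pass through the shift untouched, and $\Phi_\mu$ manifestly preserves $X_{tot}=0$. With both symmetries accounted for, the same application of Theorem~\ref{th_tworeductions}, chained with the second half of Theorem~\ref{th_reduced1}, yields conjugacy with the fully translation- and rotation-reduced three-body problem at angular momentum $\mu$. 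I do not expect a serious obstacle anywhere: the only real content --- partial symplecticity with the curvature-twisted form --- was discharged in the derivation of $\Omega_\mu$, and what remains is the bookkeeping of the two-factor group action together with the antilinearity sign in the level-set identity.
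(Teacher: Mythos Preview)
Your proposal is correct and follows exactly the paper's own approach: the paper's proof is the single sentence ``Applying theorem~\ref{th_tworeductions} to the momentum shift map and remembering theorem~\ref{th_reduced1},'' and you have simply unpacked the verifications (level-set correspondence, equivariance, partial symplecticity via $\Phi_\mu^*\Omega=\Omega_\mu$) that this sentence leaves implicit. There is nothing to add or correct.
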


This Hamiltonian system represents the reduced three-body problem in a way which is convenient for regularization of binary collisions and blow-up of triple collision.  However, the phase space is still 14-dimensional.  Next we describe how to find lower-dimensional representations of the reduced three-body problem by parametrizing the shape sphere in various ways.

\subsection{Parametrizing the Shape Sphere}\label{sec_parametrizingPW}
The shape sphere is the projective space $\bP(\cW)$.  As in section~\ref{sec_parametrizingW}, choosing a complex basis $\{e_1,e_2\}$ for $\cW$  gives a map $f:\C^2\into\cW$, $X = f(\xi)$.  By viewing $X\in\cW$ and $\xi\in\C^2$ as homogeneous coordinates  we get an induced parametrization of the shape sphere $f_{pr}:\CP^1\into \bP(\cW)$. 

The formulas of section~\ref{sec_parametrizingW} (with $(Q,P)$ replaced by $(X,Z)$) allow us to find the reduced Hamiltonian for any such basis.  If 
$$e_1= (a_{12},a_{31},a_{23})\qquad e_2= (b_{12},b_{31},b_{23}) \in\cW$$
then we have, as before,
$$X_{ij} = \xi_1\,a_{ij} + \xi_2\, b_{ij}$$
and
 $$\bar\eta_1 = \pairing{Y,e_1}\qquad \bar\eta_2 = \pairing{Y,e_2}.$$
We define a Hermitian mass metric and dual mass metric for $\xi,\eta$ to be the pull-backs of the metrics for $X,Y$.  The squared norms are
 $$|\xi|^2= \bar\xi^T\, G\,\xi\qquad |\eta|^2 = \bar\eta^T\,G^{-1}\eta$$
 where $G$ is the matrix with entries $G_{ij} = \metric{e_i,e_j}$, and these squared norms
 represent the mass metric and cometric on   $\cW$. 

The  relation between the cometric and kinetic energy 
yields the Hamiltonian. (See equations  (\ref{eq_Hamiltonianreduced},\ref{eq_Kmu}) and theorem \ref{thm_FSthm}.)\begin{equation}\label{eq_parametrizedHmu}
H_\mu(r,p_r,\xi,\eta) =\fr12\left( p_r^2 +\fr{\mu^2}{r^2}+\fr{|\xi|^2 | \eta| ^2}{r^2}\right)- \fr1r V(\xi)
\end{equation}
 where  the shape potential is
 $$V(\xi) = |\xi|\left( \fr{m_1 m_2}{\rho_{12}}+\fr{m_1 m_3}{\rho_{31}}+\fr{m_2 m_3}{\rho_{23}}\right)\qquad \rho_{ij} = |X_{ij}| = |a_{ij}\xi_1 + b_{ij} \xi_2|.$$
 
To make the map $F$ of section~\ref{sec_parametrizingW} be partially symplectic we need to alter the standard symplectic form in $(\xi,\eta)$-space by subtracting $2\mu\,F^*\Omega'_{FS}$.  Pulling back the Fubini-Study metric 
$\metric{.,.}_{FS}$ by $f$ gives the  Fubini-Study metric in $\xi$ space 
$$\metric{.,.}_{FS} = \fr {\metric{d\xi,d\xi}\metric{\xi,\xi}-\metric{d\xi,\xi}\metric{\xi,d\xi}}{\metric{\xi,\xi}^2}.$$
With the help of (\ref{eq_FStensorX}) one can show
$$\metric{.,.}_{FS} = \fr{g}{|\xi|^4}\,\bar{\s_0}\tensor \s_0  \text{ where }\s_0 = \xi_1d\xi_2 - \xi_2 d\xi_1\qquad g = \deter{G}$$
The Fubini-Study two-form is the imaginary part.

Since $\s_0$ is independent of the choice of basis, the Fubini-Study metrics for various choices of basis are all conformal to one another.  If we choose an orthonormal basis the metrics are Euclidean.   The Fubini-Study metric for a general basis is related to the Euclidean one by 
$$\metric{.,.}_{FS} =\k(\xi)\,\metric{.,.}_{FS,euc}$$
where the conformal factor is
\begin{equation}\label{eq_kappa2}
\k(\xi) =\fr{g|\xi|_{euc}^4}{|\xi|^4}
\end{equation}
where $|\xi|^2_{euc} = |\xi_1|^2+|\xi_2|^2$.

The curvature term can be calculated directly from the definition $H_{\mu,\eta}\interl \Omega_{FS}$ and we find
$$T_{curv} = -\fr{2\mu}{r^2}\,i\eta.$$
Hamilton's equations in $T^*\R^+\times T^*_{pr}\C^2$ are
\begin{equation}\label{eq_parametrizedHmuODE}
\begin{aligned}
\dot r &= p_r  \\
\dot p_r &= \fr{\mu^2+|\xi|^2 |\eta|^2 }{r^3} - \fr1{r^2}V(\xi) \\
\dot \xi   &= \fr{|\xi|^2}{r^2}G^{-1}\eta \\
\dot \eta  &= \fr1{r}DV(\xi) -\fr{ |\eta|^2}{r^2}G\,\xi - \fr{2\mu}{r^2}i\eta.
\end{aligned}
\end{equation}

There  are still 10 variables but the invariant set $T^*\R^+\times T^*_{pr}\C^2$ with $\metric{\eta,\xi}= 0$ is 8-dimensional  and we have a complex scaling symmetry.    The introduction of an affine coordinate on the projective line
yields a full {\em local} reduction to 6 variables.  For example, consider those points  $[\xi] = [\xi_1,\xi_2]\in \CP^1$ with  
$\xi_1\ne 0$.  If $\rho$ is any nonzero constant complex number then every such point has a unique representative of the form
$$[\xi_1,\xi_2] = [\rho, z]\qquad  z = x+i\,y\in\C, $$
thus  parametrizing almost of the shape sphere by a single complex variable $z$, the {\em affine coordinate}.  Of course the roles of $\xi_1,\xi_2$ could be reversed to parametrize the subset with  $\xi_2\ne 0$.   

If $\zeta =  \alpha+ i\beta\in \C^*$ denotes the momentum vector dual to $z$ then the unique extension of
$f(z) = (\rho, z)$ to a partially symplectic map $T^*\C\into  T^*_{pr}\C^2 = \{\metric{\eta,\xi} = 0\}$ is defined by  
$$\xi_1=\rho\qquad \xi_2 = z \qquad \eta_1=-\overline{z}\zeta/\rho \qquad \eta_2 = \zeta.$$
 One computes  the mass metric is  
 $$|\xi(z)|^2 = g_{11}|\rho|^2 + g_{22}|z|^2 + 2\re(\bar \rho g_{12} z)$$
 and the cometric is  
 $$|\zeta|^2 = \fr{|\xi(z)|^2 |\zeta|^2}{g|\rho|^2} \qquad g = det(G_{ij}). $$
 This gives a Hamiltonian system with 6 degrees of freedom
\begin{equation}\label{eq_affineHmu}
H_\mu(r,p_r,x,y,\a,\b) = \fr12\left(p_r^2 +\fr{\mu^2}{r^2}+\fr{|\xi(z)|^4|\zeta|^2}{g |\rho|^2 r^2}\right) - 
\fr{1}{r}V(x,y)
\end{equation}
 where  $$V(z) = |\xi(z)|\left( \fr{m_1 m_2}{\rho_{12}}+\fr{m_1 m_3}{\rho_{31}}+\fr{m_2 m_3}{\rho_{23}}\right)\qquad \rho_{ij} = |a_{ij} + b_{ij} z|.$$
The Fubini-Study form is
 $$\Omega_{FS} = \fr{g}{ |\rho|^2|\xi(z)|^2}\im d\bar z\tensor dz = \fr{g\,dx \wedge dy}{ |\rho|^2|\xi(z)|^2}.$$
The curvature term is just 
$$T_{curv} = -\fr{2\mu}{r^2} i\z$$
as usual.

\begin{example} [Projective Jacobi Coordinates]\label{ex_projectivejacobi}
As a first example, consider using Jacobi coordinates as in section~\ref{sec_parametrizingW}, only this time applied to the homogeneous variables $X,Z$.  As before, the  basis which defines the Jacobi coordinates is the orthogonal basis
$$e_1 = (-1,\nu_2,\nu_1)\qquad e_2 = (0,1,-1).$$
We have
$$X = (-\xi_1, \xi_2+\nu_2\xi_1,-\xi_2+\nu_1\xi_1)\qquad \xi =(-X_{12}, \nu_1X_{31}-\nu_2 X_{23})$$
and 
$$Z = (0, \eta_1+\nu_1\eta_2,\eta_1-\nu_2\eta_2)\qquad \eta = (-Z_{12}+\nu_2 Z_{31}+\nu_1 Z_{23}, Z_{31}-Z_{23})$$
where, as usual, $Z$ is non-unique.

The Hamiltonian is (\ref{eq_parametrizedHmu}) where the shape potential is
$$V(\xi) = |\xi|\left( \fr{m_1 m_2}{|\xi_1|}+\fr{m_1 m_3}{|\xi_2 + \nu_2\xi_1|}+\fr{m_2 m_3}{|\xi_2-\nu_1\xi_1|}\right).$$
The mass matrix $G= \diag(\mu_1,\mu_2)$ has determinant $g=\mu_1\mu_2 = \fr{m_1m_2m_3}{m}$ 
and associated  norm and conorm:
$$|\xi|^2 = \mu_1|\xi_1|^2+\mu_2|\xi_2|^2 \qquad  |\eta|^2 = \fr{|\eta_1|^2}{\mu_1}+ \fr{|\eta_2|^2}{\mu_2}.$$
Hamilton's equations with the curvature term are given by (\ref{eq_parametrizedHmuODE}).

If we introduce affine variables by setting $\xi_1= \rho, \xi_2 = z$ as above and if we choose
$\rho = \sqrt{\fr{\mu_2}{\mu_1}}$ 
the mass norm reduces to 
$$|\xi|^2 = \mu_2(1+x^2+y^2)$$
and we get the affine Jacobi Hamiltonian
$$H_\mu(r,p_r,x,y,\a,\b) = \fr12\left(p_r^2 +\fr{\mu^2}{r^2}+\fr{(1+x^2+y^2)^2|\zeta|^2}{r^2}\right) - 
\fr{1}{r}V(x,y)$$

Hamilton's equations with the curvature term are 
\begin{equation}\label{eq_reducedHamiltonODEaffine}
\begin{aligned}
\dot r &= p_r  \\
\dot p_r &= \fr{1}{r^3} [ \mu^2+ (1+x^2+y^2)^2 (\alpha^2+\beta^2) ]- \fr1{r^2}V(\xi) \\
\dot x   &=  \fr{(1+x^2+y^2)^2}{r^2} \alpha  \\
\dot y    &= \fr{(1+x^2+y^2)^2}{r^2}\beta  \\
\dot \alpha  &= \fr{1}{r}V_x (x,y) -\fr{2}{r^2} (1+x^2+y^2)(\alpha^2+\beta^2) x +\fr{2 \mu}{r^2} \beta \\
\dot \beta  &= \fr{1}{r}V_y(x,y)  -\fr{2}{r^2}(1+x^2+y^2)(\alpha^2+\beta^2) y - \fr{2\mu}{r^2} \alpha.
\end{aligned}
\end{equation}
\end{example}

\begin{example}[Equilateral Coordinates]\label{ex_equilateral}
In projective Jacobi  coordinates $(\xi_1,\xi_2)$,  the binary collision points  $b_{12}, b_{13}, b_{23}$
are located at  the projective points 
$$[1,0], [1,-\nu_2], [1,\nu_1]\in\CP^1
$$
while the equilateral triangle configurations (the Lagrange points) are at 
$$[1,\ell_\pm]\in\CP^1$$
 where
$$
\ell_\pm = \fr{m_1-m_2}{2(m_1+m_2)} \pm \fr{\sqrt{3}}{2}\,i = \fr{\nu_1-\nu_2}{2}\pm \fr{\sqrt{3}}{2}\,i .
$$

 Using a M\"obius transformation, we can put  three points anywhere we like on the shape sphere, $\CP^1$.
 Remarkably, it turns out that if we put the binary collisions at the third roots of unity 
 \begin{equation}
 \label{eq_poles}
 [\xi_1,\xi_2] = [1,1], [1,\omega], [1,\bar\omega]\in\CP^1
 \end{equation}
 with $\omega = -\fr{1}{2} + i \fr{\sqrt{3}}{2}$, then the equilateral points are automatically moved to the north and south poles
 $$[1,0], [0,1].$$
 These coordinates were introduced in \cite{MMV}.
 
 These coordinates are obtained by choosing the basis
 $$e_1 =  (1,\omega,\bar\omega)\qquad  e_2 = - \bar e_1 = (-1,-\bar\omega,-\omega) $$
 for $\cW$.  The coordinate change map is $X = \xi_1\,e_1+\xi_2\,e_2$ or
 $$X_{12} = \xi_1-\xi_2\qquad X_{31}= \omega\xi_1-\bar\omega\xi_2\qquad X_{23}= \bar\omega\xi_1-\omega\xi_2.$$
and indeed takes   the roots of unity (eq.  \ref{eq_poles})  to the binary collisions.  
Setting $\xi_2 =0$ we see that $|X_{12}|= |X_{32}| = |X_{23}|$ corresponding
to an equilateral triangle, with the same result if $\xi_1 = 0$.
Thus the coordinate change map sends the poles  $\xi=[1,0],[0,1]$   to the   equilateral triangles.

The mutual distances (of the homogeneous variables) $\rho_{ij} = |X_{ij}|$ which appear in the shape potential are very simple:
$$\rho_{12} = |\xi_1-\xi_2| \qquad \rho_{31}= |\xi_1-\omega\xi_2| \qquad \rho_{23}=\xi_1-\bar\omega\xi_2|.$$
The mass metric can also be written in terms of these
$$|\xi|^2 = \fr1m(m_1m_2\rho_{12}^2+m_3m_1\rho_{31}^2+m_2m_3\rho_{23}^2).$$
It is represented by the matrix $G$ with entries $g_{ij}= \metric{e_1,e_2}$:
$$ g_{11}=g_{22} = \fr{m_1m_2+m_3m_1+m_2m_3}{m} \quad 
g_{12} = \bar g_{21} = -\fr{m_1m_2+m_3m_1\,\omega+m_2m_3\,\bar\omega}{m}$$
and determinant $g = \deter{G} = \fr{3 m_1m_2m_3}{m}$.

The inverse transformation is given by
$$ \xi_1 = \fr13(X_{12}+\bar\omega X_{31}+\omega X_{23})\qquad \xi_2 =  -\fr13(X_{12}+\omega X_{31}+\bar\omega X_{23})$$
and the momenta satisfy
$$\eta_1 = Z_{12}+\bar\omega Z_{31}+\omega Z_{23}\qquad \eta_2 = -Z_{12}-\omega Z_{31} -\bar\omega Z_{23}.$$

Choosing affine variables by setting $\xi_1=z, \xi_2=1$ we get the Hamiltonian (\ref{eq_affineHmu})
with
$$|\xi(z)|^2 =  \fr1m(m_1m_2|z-1|^2+m_3m_1 |z-\omega|^2+m_2m_3|z-\bar\omega|^2).$$
The complexity of  mass norm is perhaps outweighed by the fact that the potential is given by the wonderful expression
$$V(z) =|\xi(z)| \left( \fr{m_1m_2}{|z-1|} +  \fr{m_1m_3}{|z-\, \omega|}+   \fr{m_2m_3}{|z-\, \bar \omega|} \right).$$

The  advantage of these coordinates  is that
   they provide the homogenized potential $V$ with  ``radial monotonicity'''. 
 Let $E = x \dd{}{x} + y\dd{}{y} $ be the radial vector field in the $z$ plane,
      where $z = x + i y$. 
     Then  $E[V] >  0$ for $0 < |z| < 1$,  $E[V] <0$ for $|z| > 0$,
     and  $E[V] = 0$ if and only if $|z| = 1$ or $z = 0$. 
   (See Proposition 4 of  \cite{MMV})  This monotonicity was the  key ingredient to the main theorem of  
    \cite{Mont}.

\end{example}

 \subsection{Making the Shape Sphere Round}\label{sec_shapesphereround}
Instead of using projective or local affine coordinates, one can map the shape sphere to the unit sphere in $\R^3$.   First we do this homogeneously, then restrict to the unit sphere to get another version with 6 degrees of freedom.  Let $\xi=(\xi_1,\xi_2)\in\C^2$ be coordinates associated with some choice of basis $e_1,e_2$ for $\cW$.

Consider the Hopf map  $h:\C^2\into\R^3$ given by
$$w_1=2\re \overline{\xi_1}\xi_2 \qquad w_2=2 \im \overline{\xi_1}\xi_2\qquad w_3= |\xi_1|^2-|\xi_2|^2$$
Using the Euclidian metric for $w$ we get
$$|w|^2 = w_1^2+w_2^2+w_3^2 = |\xi|_{euc}^4 = (|\xi_1|^2+|\xi_2|^2)^2.$$
It follows that
$$2|\xi_1|^2 = |w| + w_3 \qquad 2 |\xi_2|^2 = |w|-w_3 \qquad 2\bar\xi_1\xi_2 = w_1+i\,w_2.$$

We will need formulas for $\rho_{ij} = |X_{ij}| = |a_{ij}\xi_1+b_{ij}\xi_2|$ in the variables $w_i$.  We have
\begin{equation}\label{eq_rhoijw}
\begin{aligned}
\rho_{ij}^2 &= |a_{ij}|^2|\xi_1|^2 +  |b_{ij}|^2|\xi_2|^2 +2\re(\bar\xi_1\xi_2 \bar a_{ij} b_{ij})\\
&= \smfr12(|a_{ij}|^2+|b_{ij}|^2)|w|+\smfr12(|a_{ij}|^2-|b_{ij}|^2)w_3 +\re(\bar a_{ij}b_{ij})w_1-\im(\bar a_{ij}b_{ij})w_2.
\end{aligned}
\end{equation}
Then the mass metric will be given by
\begin{equation}\label{eq_xisqw}
|\xi|^2 = \fr1m(m_1m_2\rho_{12}^2+m_3m_1\rho_{31}^2+m_2m_3\rho_{23}^2).
\end{equation}

If we let $\a_1,\a_2,\a_3$ be dual momentum variables, we can extend the Hopf map $h$ to a partially symplectic map
$F:T^*_{pr}\C^2\into T^*_{sph}\R^3$ by defining its (pseudo) inverse: 
$$\eta = \a \circ Dh : = Dh^t \a.$$
To find the reduced Hamiltonian in $w$ coordinates we will exploit the fact that Euclidean metric transforms nicely.  Recall that the shape kinetic energy is the dual of the Fubini-Study metric and that the latter is related conformally to the Euclidian metric with conformal factor $\k^{-1}$ where
$\k$ is given by (\ref{eq_kappa2}).  In other words, since we are restricting to $\metric{\eta,\xi} = 0$ we have
$$|\xi|^2|\eta|^2 = \k^{-1}|\xi|_{euc}^2 |\eta|_{euc}^2.$$
One can verify that the Euclidean norms transform under the Hopf map in such a way that 
$$|\xi|_{euc}^2 |\eta|_{euc}^2 = 4 |w|^2 |\a|^2$$
where we are using the Euclidean norm on $\R^3,\R^{3*}$.  Hence 
 the reduced Hamiltonian on the sphere is given by:
$$H_\mu(r,p_r,w,\a) = \fr12\left( p_r^2 +\fr{\mu^2}{r^2}+\fr{4|w|^2|\a|^2}{\k(w)r^2} \right) - 
\fr{1}{r}V(w)$$
 where  $|w|^2 =w_1^2+w_2^2+w_3^2$ and $|\a|^2 = \a_1^2+\a_2^2+\a_3^2$
 and where the shape potential is given by
 $$ V(w) = |\xi(w)|  \left( \fr{m_1 m_2}{\rho_{12}}+\fr{m_1 m_2}{\rho_{12}}+\fr{m_1 m_2}{\rho_{12}}\right)$$
 with the $\rho_{ij}$ and $|\xi|$ as in (\ref{eq_rhoijw}) and (\ref{eq_xisqw}).   

The Fubini-Study form becomes a multiple $\k/4$ of the Euclidean solid angle form
 $$\Omega_{FS} = \fr{\kappa}{4|w|^3}(w_1 dw_2\wedge dw_3 +w_2 dw_3\wedge dw_1 +w_3 dw_1\wedge dw_2).$$
 This leads to the curvature term 
 $$T_{curv} = \fr{2 \mu }{|w| r^2} \a\times w$$
  where $w\times\a$ denotes the cross product in $\R^3$.  

The differential equations are:
\begin{equation}\label{eq_reducedHamiltonODEspherical}
\begin{aligned}
\dot r &= p_r  \\
\dot p_r &= \fr{1}{r^3}\left(\mu^2+ \fr{4|w|^2|\a|^2}{\k} \right)- \fr1{r^2}V(\xi) \\
\dot w   &=  \fr{4|w|^2}{\k r^2} \alpha  \\
\dot \alpha  &= \fr{1}{r}DV(w) -\fr{4|\a|^2}{\k r^2} w + \fr{4|w|^2|\a|^2}{\k^2 r^2}\k_w + \fr{2 \mu }{|w| r^2} \a\times w
\end{aligned}
\end{equation}

From theorem~\ref{th_tworeductions}, if we restrict to $T^*\R^+\times T^*_{sph}\R^3 = \{\metric{\a,w}_{euc}=0\}$ and quotient by the scaling action of $\R^+$, we get a reduced system equivalent to the reduced three-body problem.  But $\metric{\a,w}_{euc}=0$ implies that $|w|$ is constant under the flow.   Hence we have a 6-dimensional invariant submanifold given by $|w| = 1,\metric{\a,w}_{euc}=0$ representing the reduced three-body problem.  The reduced phase space is $T^*\R^+\times T^*\S^2$ and the shape sphere is represented by the standard unit sphere.

To get to $6$-dimensions with no constraints one could parametrize the sphere with two variables.  If this is done with stereographic projection, the result is the similar to the affine coordinate reduction of section~\ref{sec_parametrizingPW}.  On the other hand one could also use spherical coordinates $\t,\phi$.  However, both of these are just local coordinates while the system above is global, albeit constrained.

\begin{example}[Jacobi coordinates on $\S^2$]
If we choose an orthonormal basis for $\cW$ then we get the conformal factor $\k=1$ and the resulting Hamiltonian will have a simpler shape kinetic energy.  For example we could normalize the Jacobi basis of example~\ref{ex_projectivejacobi} to
$$e'_1 = \fr{1}{\sqrt{\mu_1}}(-1,\nu_2,\nu_1)\qquad e'_2 = \fr1{\sqrt{\mu_2}}(0,1,-1).$$
The coordinates $\xi_i$ are replaced by $\sqrt{\mu_i}\xi_i$ in all of the formulas.  We get rather complicated homogeneous mutual distances
$$\begin{aligned}
2\mu_1\mu_2 \rho^2_{12} &= \mu_2(|w| + w_3)\\
2\mu_1\mu_2 \rho^2_{31} &=(\mu_2\nu_2^2+\mu_1)|w| + (\mu_2\nu_2^2-\mu_1)w_3+2\nu_2\sqrt{\mu_1\mu_2}w_1\\
2\mu_1\mu_2 \rho^2_{23} &=(\mu_2\nu_1^2+\mu_1)|w| + (\mu_2\nu_1^2-\mu_1)w_3-2\nu_1\sqrt{\mu_1\mu_2}w_1.
\end{aligned}
$$
In the equal mass case
with $m_i=1$ and $|w|=1$, however, we get
$$\rho_{12}^2 = |w|+w_3\qquad \rho_{31}^2 = |w|+\fr{\sqrt{3}}{2}w_1-\fr12 w_3\qquad \rho_{23}^2 = |w|-\fr{\sqrt{3}}{2}w_1-\fr12 w_3.$$

On the other hand the Hamiltonian is
$$H_\mu(r,p_r,w,\a) = \fr12\left( p_r^2 +\fr{\mu^2}{r^2}+\fr{4|w|^2|\a|^2}{r^2} \right) - 
\fr{1}{r}V(w)$$
where the norms are Euclidean.
\end{example}

\begin{example}[Equilateral coordinates on $\S^2$]\label{ex_equilateralsphere}
If we use the basis of example~\ref{ex_equilateral}
 $$e_1 =  (1,\omega,\bar\omega)\qquad  e_2 = - \bar e_1 = (-1,-\bar\omega,-\omega) $$
 we get simple mutual distances
$$\rho_{12}^2 = |w|-w_1 \qquad \rho_{31}^2 = |w|+\fr12 w_1 -\fr{\sqrt{3}}{2}w_2\qquad \rho_{23}^2 = |w|+ \fr12 w_1 +\fr{\sqrt{3}}{2}w_2.$$
Collinear shapes form  the equator $w_3=0$ with the binary collisions placed at the roots of unity.

On the other hand we have a formidable conformal factor
$$
\kappa = \fr{3m_1m_2m_3 m(w_1^2+w_2^2+w_3^2)}{(m_1m_2\rho_{12}^2+m_3m_1\rho_{31}^2+m_2m_3\rho_{23}^2)^2}, 
$$
In  the equal mass case ($m_i=1$ ) we see   $\k=1$.
\end{example}

\subsection{Visualizing the Shape Sphere}
Having reduced the planar three-body problem by using  size and shape coordinates, we will pause to have a closer look at the shape sphere and the shape potential.  

Using the spherical variables $w=(w_1,w_2,w_3)$ we can visualize the shape sphere as the round unit sphere in $\R^3$.  The equilateral basis of example~\ref{ex_equilateralsphere} puts the binary collisions at the third roots of unity on the equator and the Lagrange equilateral configurations at the poles.  Figure~\ref{fig_sphericalV}) shows some of the level curves of $V$ for two choices of the masses. In addition to the binary collisions shapes where $V\into\infty$ , there are three saddle points at the Eulerian central configurations.   The Lagrange points are always minima of $V$.

\begin{figure}[h]
\scalebox{1.2}{\includegraphics{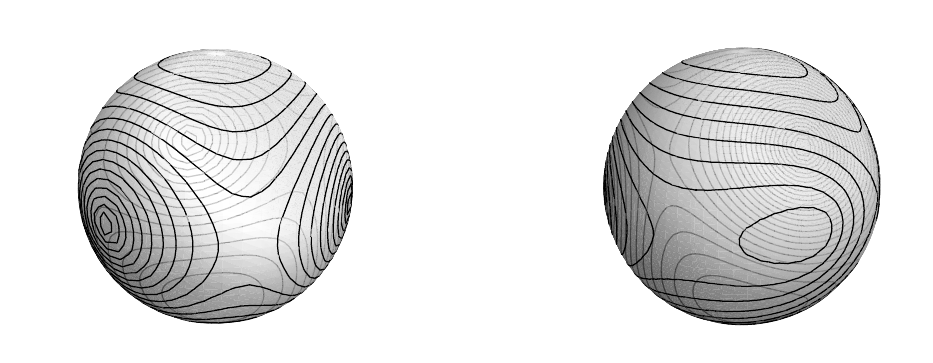}}
\caption{Contour plot of the shape potential on the unit sphere $w_1^2+w_2^2+w_3^2=1$  in the equal mass case (left) and for masses $m_1=1,m_2=2,m_3=10 $ (right).} \label{fig_sphericalV}
\end{figure}

If we use stereographic projection to map the sphere to the complex plane, we get the affine coordinate representation of example~\ref{ex_equilateral}.
Figure~\ref{fig_planarV} shows affine contour plots for the same two choices of the masses.  Now the collinear shapes are on the real axis.

\begin{figure}[h]
\scalebox{0.7}{\includegraphics{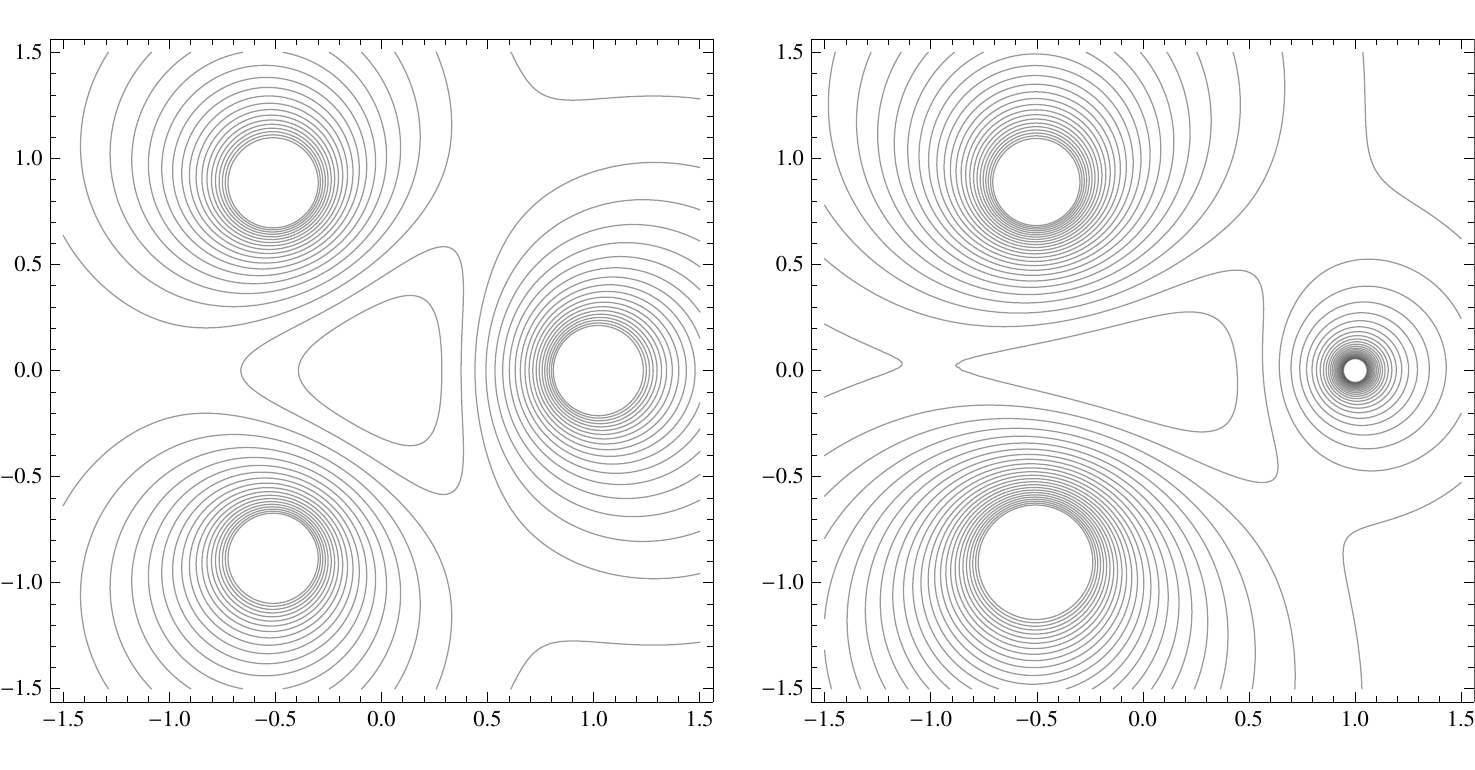}}
\caption{Contour plot of the shape potential on the complex plane in the equal mass case (left) and for masses $m_1=1,m_2=2,m_3=10$ (right).  These plots can be viewed as a stereographic projections of  those in figure~\ref{fig_sphericalV}. }\label{fig_planarV}
\end{figure}

\section{Levi-Civita Regularization} 
In this section, we describe a way to simultaneously regularize all $3$ binary collision using  $3$ separate Levi-Civita transformations.  This approach to simultaneous regularization was introduced by Heggie \cite{Heggie}.  There are two versions depending on whether the variables $Q_{ij}$ or the homogeneous variables $X_{ij}$ are used.  The former approach was used by Heggie; we will take the latter.  We begin with a review of Levi-Civita regularization for the Kepler problem.

Levi-Civita showed how to regularize the two-body problem,
which is to say, the Kepler problem.
Let $q \in \C$ denote the position of a planet going around an
infinitely massive sun placed at   the origin.  After
a normalization, the Kepler Hamiltonian is $\frac12 |p|^2 - \alpha/|q|$.
Levi-Civita's transformation is
 the map
$$z \mapsto z^2 = q$$
together with the induced map on momenta:
$$\eta  \mapsto {\frac12 \bar \eta} = p$$
and the   time rescaling
$${d \over {d \tau}} = r {d \over {dt}}.$$
To understand the map on momenta, make   
  the substitution $q = z^2$ in the expression $\langle p, dz \rangle$
  for the canonical one-form. We 
have $\langle p, dq \rangle = \langle p, 2 z dz \rangle = \langle 2 \bar z p, dz \rangle$
which shows that if  $\eta = 2 \bar z p$ then
$\langle \eta, dz \rangle = \langle p, dq \rangle$. 
This computation shows that  
the map $(\eta, z) \to (p, q)$ with  $p =  {1 \over 2 \bar z} \eta$,  $q = z^2$ is  
 a 2:1   canonical transformation away from the origin.
Observe that $r= |z|^2$.  Thus in terms of the new variables
$$H = \fr{1}{2r} (|\eta|^2 - \fr{\alpha}{|z|^2}).$$

Time rescaling is equivalent to rescaling the Hamiltonian vector field.
This rescaling can be implemented using   the following ``Poincar\'e trick" .
 If $X_H$ is the Hamiltonian vector field for 
$H$, and if $h$ is a value of $H$, then $f X_H$ is the Hamiltonian vector field
for the Hamiltonian $\tilde H = f(H-h)$ {\em provided we restrict ourselves 
to the level set} $\{ H = h \}$. We take $f = r = |z|^2$ and compute that 
$$\tilde H =  \fr{1}{2} (|\eta|^2 - h|z|^2 -  \alpha).$$
which is the Hamiltonian for a harmonic oscillator when $h < 0$.

 \subsection{Simultaneous Regularization}
Let $(r,X)$ denote either the homogeneous-spherical or radial-homogeneous coordinates.
To simultaneously  regularize all three double collisions we perform a Levi-Civita
transformation on each of the homogeneous complex variables $X_{ij}$.
Thus, we introduce three new complex variables $z_{ij} = -z_{ji}$
and set  
$$X_{ij} = z_{ij}^2.$$
Define a {\em regularizing map}  $f: \C^3_0 \into\C^3_0$ by
$$X =f(z_{12}, z_{31}, z_{23}) = (z_{12}^2, z_{31}^2, z_{23}^2).$$
The preimage of the subspace $\cW$ is the quadratic cone $\cC$ with
$$\cW: \qquad  z_{12}^2 + z_{31}^2 + z_{23}^2 = 0$$
and we have $f:\cC_0\into \cW_0$.  Note that every $X\in\cW_0$ has 8 preimages under $f$, except  for the 
three binary collision points ($X_{ij}=0$ some $ij$) which each have 4 preimages. 
(Since $X \ne 0$, at most one of the $X_{ij}$ or $z_{ij}$ can vanish at a time on $\cW_0$ or $\cC_0$.)

Since $f$ is  homogeneous, it induces maps  $f_{sph}:\S^5\into \S^5$ and $f_{pr}:\CP^2\into \CP^2$.
In this case we also view $z_{ij}$ as homogenous spherical or projective coordinates.  These restrict to regularizing maps
 $f_{sph}:\S(\cC)\into \S(\cS)$ and $f_{pr}:\bP(\cC)\into \bP(\cW)$ where, as above, $\S(.), \bP(.)$ denote quotient spaces under real and complex scaling, respectively.

The mutual distances become
\begin{equation}\label{eq_rhoij}
\rho_{ij} = |X_{ij}| = |z_{ij}|^2
\end{equation}
and the mass norm is
\begin{equation}\label{eq_normXsq}
|X(z)|^2 = |f(z)|^2 = \fr{m_1m_2\rho_{12}^2+m_1m_3\rho_{31}^2+m_2m_3\rho_{23}^2}{m_1+m_2+m_3}.
\end{equation}
We will use the standard Hermitian inner product, denoted $\metrictwo{.,.}$, on $z$-space so
\begin{equation}\label{eq_normzsq}
\normtwo{z}^2 = |z_{12}|^2+|z_{31}|^2+|z_{23}|^2 = \rho_{12} +\rho_{31}+\rho_{23}.
\end{equation}

Let  $\eta_{ij}$ be the conjugate momenta to $z_{ij}$ and let $Y_{ij}$ the homogenous momenta conjugate to $X_{ij}$.  We extend $f$ to a map $(r,p_r,X,Y) = F(r,p_r,z,\eta)$ by setting
$$Y_{ij} = {1 \over {2 \bar z_{ij}}} \eta_{ij}.$$
Then $F$ restricts to maps $T^*\R^+\times T^*_{sph}\C^3\into T^*\R^+\times T^*_{sph}\C^3$ and
$T^*\R^+\times T^*_{pr}\C^3\into T^*\R^+\times T^*_{pr}\C^3$ where in $(z,\eta)$-variables we have the constraints 
  $\re\pairing{\eta,z}=0$ for the sphere and $\pairing{\eta,z}=0$ for the projective plane. We continue to denote
  these restricted maps by the letter $F$. 

The   action of $c \in \C$ by translation of the momenta $Y_{ij}$  to $Y_{ij} + c$
pulls-back under $F$ to translation of $\eta_{ij}$ by $2  c \bar z_{ij}$, that is, to the action
$$c\cdot (r,p_r,z,\eta) = (r,p_r,z,\eta + 2 c \bar z).$$
The momentum map  for this pulled back action 
is $\g = z_{12}^2+z_{31}^2+z_{23}^2$.  Of course we will be interested in the level set $\g=0$. 
We will call this the $z$-translation symmetry of $\eta$.

 \subsubsection{Geometry of $\cC$ and the Regularized Shape Sphere}\label{sec_geometryC}
 It is interesting to investigate the algebraic surface $\cC$  in more detail.  If we write
 the complex vector $z\in \C^3$ as $z=a+i\,b$ where $a=\re z, b=\im z \in\R^3$ then
 $$z_{12}^2 + z_{31}^2 + z_{23}^2 = 0\text{ if and only if }|a|^2=|b|^2,\; a\cdot b=0.$$
 This means $a,b$ are real, orthogonal vectors of equal length $s^2 = |a|^2=|b|^2=|z|^2/2$.  If we define a third vector
 $c=a\times b$ we get an orthogonal frame in $\R^3$ and the matrix
 \begin{equation}\label{eq_Aofz}
 A(z) = \fr1s\m{a_{12}&a_{12}&c_{12}/s\\a_{31}&b_{31}&c_{31}/s\\a_{23}&b_{23}&c_{23}/s}\in SO(3).
 \end{equation}
The mapping $A(z)$  induces a diffeomorphism from  the quotient space of $\S(\cC)$ of $\cC_0$ under positive, real scaling  to $SO(3)$ and hence, as is well-known, to the real projective space $\RP(3)$ (and to the unit tangent bundle to $S^2$).

The projective curve $\bP(\cC)$ turns out to be diffeomorphic to the two-sphere $\S^2$ and, accordingly, we will call it the {\em regularized shape sphere}.   One way to see this is to note that  $\bP(\cC)\simeq \S(\cC)/\S^1$ is the quotient of $\S(\cC)$ under rotations.  It is easy to see that  action the rotation group on  $z$ rotates the vectors $a,b\in \R^3$ above in their own plane and leaves $c=a\times b$ invariant.  It follows that the map $z\mapsto c/|c|$ induces a diffeomorphism $\bP(\cC)\simeq \S^2$.

In the sections below, we will apply the regularizing map to obtain several regularized Hamiltonians for the three-body problem.   Starting with homogenous spherical variables leads to a regularized system not reduced by rotations while the radial-homogenous variables lead to a Hamiltonian system which is both regularized and reduced.  In addition we will consider several way to parametrize the cone $\cC$ to obtain lower-dimensional systems.  Theorem~\ref{th_tworeductions} can be applied  to show the equivalence of the Hamiltonian systems below, but we will omit most of the details.

\subsection{Spherical Regularization}\label{sec_sphericalregularization}
First we will find the regularized Hamiltonian in spherical-homogeneous coordinates.  This gives a regularization of binary collisions without reducing by the rotational symmetry.   Let $(r,X)$ be the spherical-homogeneous coordinates of section~\ref{sec_spherical}.  The spherical Hamiltonian is
$$H_{sph}(r,p_r,X,Y) = \fr12 p_r^2 +\fr{|X|^2}{r^2} K(Y)-\fr{1}{r}V(X).$$
Using the formula analogous to the one in (\ref{eq_HamiltonianPQ3bp}) for $K(Y)$ and applying the regularizing map gives
\begin{equation}\label{eq_Hsphxieta}
\begin{aligned}
H_{sph}(r,p_r,z,\eta) = 
\fr12p_r^2 &+ \fr{|X(z)|^2}{r^2}\left(\fr{|\pi_1|^2}{8m_1\rho_{12}\rho_{31}}+\fr{|\pi_2|^2}{8m_2\rho_{12}\rho_{23}}+  \fr{|\pi_3|^2}{8m_3\rho_{31}\rho_{23}}\right)\\ &- \fr{1}{r}\left(\fr{m_1m_2}{\rho_{12}} + \fr{m_3m_1}{\rho_{31}} + \fr{m_2m_3}{\rho_{23}}\right) 
\end{aligned}
\end{equation}
and where
\begin{equation}
\pi_1 =  \eta_{12}\bar{z_{31}}- \eta_{31}\bar{z_{12}}
\quad \pi_2 =\eta_{23}\bar{z_{12}}- \eta_{12}\bar{z_{23}}
 \quad \pi_3 =\eta_{31}\bar{z_{23}}- \eta_{23}\bar{z_{31}}.
 \end{equation}

Next we rescale time using the Poincar\'e trick.  One choice of time-rescaling factor is 
$|z_{12}z_{31}z_{23}|^2=\rho_{12}\rho_{31}\rho_{23}$.  But since $X,z$ are homogeneous coordinates, the degree-zero homogeneous function
\begin{equation}\label{eq_tau}
\tau = \fr{\rho_{12}\rho_{31}\rho_{23}}{(\rho_{12}+\rho_{31}+\rho_{23})^3} = \fr{\rho_{12}\rho_{31}\rho_{23}}{\normtwo{z}^6}
\end{equation}
is more appropriate.  Note that by the arithmetic-geometric mean inequality we have $0\le \tau\le \fr1{27}$.

The rescaled solution with energy $H_{sph}=h$ become the zero-energy solutions of $\tilde H_{sph}(r,p_r,z,\eta) = \tau(H_{sph}-h)$.
\begin{equation}\label{eq_Htildesph}
\tilde H_{sph} = 
\fr{\tau\, p_r^2}{2} + \fr{|X(z)|^2}{r^2\normtwo{z}^6}\left(\fr{|\pi_1|^2\rho_{23}}{8m_1}+\fr{|\pi_2|^2\rho_{31}}{8m_2}+  \fr{|\pi_3|^2\rho_{12}}{8m_3}\right) 
- \fr{1}{r}W(z)  -h\tau
\end{equation}
where the {\em regularized shape potenial} $W$ is
\begin{equation}\label{eq_W}
W(z) = \fr{|X(z)|}{\normtwo{z}^6}\left(m_1m_2\rho_{31}\rho_{23} + m_1m_3\rho_{12}\rho_{23} + m_2m_3\rho_{12}\rho_{31}\right) 
\end{equation}
Note that since $z$ is a homogeneous variable representing $[z]\in\S^5$, we have $z\ne 0$.  For a homogeneous coordinate representing a binary collision we will have exactly one of the variables $z_{ij}=0$ and $\normtwo{z}>0$.  Thus $\tilde H$ is nonsingular at these points and the binary collisions are regularized.  

\begin{theorem}\label{th_sphericalregularized}
The Hamiltonian flow of $\tilde H_{sph}$ on $T^*\R^+\times T^*\C^3_0$  has an invariant submanifold 
$T^*\R^+\times T^*_{sph,\cC}\C^3_0$ defined by $\re\metric{\eta,z} = 0$ and $z^2_{12}+z^2_{31}+z^2_{23}=0$.  The quotient of the restricted flow by  scaling   and  by translation of $\eta$ by $\bar z$  represents the zero total momentum three-body problem with regularized binary collisions, reduced by translations (but not by rotations).
\end{theorem}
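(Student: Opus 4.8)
The plan is to derive both assertions from Theorem~\ref{th_tworeductions}, using the regularizing map $F$ as the partially symplectic equivariant map and feeding in Theorem~\ref{th_spherical} to identify the unregularized quotient with the translation-reduced three-body problem. The two functions cutting out the submanifold are precisely the momentum maps of the two symmetries at hand: $\re\metric{\eta,z}$ generates the scaling $(z,\eta)\mapsto(\lambda z,\eta/\lambda)$, while $\gamma=z_{12}^2+z_{31}^2+z_{23}^2$ generates the $z$-translation $\eta\mapsto\eta+2c\bar z$ (the pull-back of $X_{tot}=X_{12}+X_{31}+X_{23}$ under $X=z^2$).

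First I would establish invariance of the submanifold by checking that $\tilde H_{sph}$ is invariant under both symmetries. Scaling invariance is visible from (\ref{eq_Htildesph}): the factor $\tau$ of (\ref{eq_tau}) is homogeneous of degree $0$, each $\pi_i$ is unchanged under $(z,\eta)\mapsto(\lambda z,\eta/\lambda)$, and a weight count ($|X(z)|^2$ of degree $4$, $\normtwo{z}^6$ of degree $6$, each $\rho_{ij}$ of degree $2$) shows every remaining term has total degree $0$. Invariance under $\eta\mapsto\eta+2c\bar z$ is equally immediate, since each $\pi_i$ is unchanged by this shift (the two added terms cancel) and $p_r$, $\tau$ and $W$ do not involve $\eta$. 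Hence both $\re\metric{\eta,z}$ and $\gamma$ Poisson-commute with $\tilde H_{sph}$ and are conserved, so their common zero level $\{\re\metric{\eta,z}=0,\ \gamma=0\}$ is flow-invariant, and is a manifold as a regular common level set away from $z=0$.

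Next I would run the equivalence through Theorem~\ref{th_tworeductions} with $F:(r,p_r,z,\eta)\mapsto(r,p_r,X,Y)$, $X_{ij}=z_{ij}^2$, $Y_{ij}=\eta_{ij}/(2\bar z_{ij})$, restricted to the collision-free part of the submanifold. The single-variable Levi-Civita identity $\bar Y_{ij}\,dX_{ij}=\bar\eta_{ij}\,dz_{ij}$ gives $F^{*}\Theta=\t$ exactly (the $p_r\,dr$ piece is untouched), so $F$ is partially symplectic. It is equivariant for the two semidirect-product groups, scaling mapping to scaling via $\lambda\mapsto\lambda^{2}$ and the $z$-translation mapping to the $Y$-translation $Y\mapsto Y+c$. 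It intertwines the Hamiltonians: by construction the $(z,\eta)$-Hamiltonian is $H_{sph}\circ F$, and $\tau$ is the pull-back of the size-independent factor $|X_{12}||X_{31}||X_{23}|/(|X_{12}|+|X_{31}|+|X_{23}|)^{3}$, so $F$ carries $\tilde H_{sph}$ to the Poincar\'e-rescaled spherical Hamiltonian on the $X$-side. Theorem~\ref{th_tworeductions} then produces a symplectic local diffeomorphism $\hat F$ conjugating the reduced flows, and the second part of Theorem~\ref{th_spherical} identifies the $X$-side reduced flow, before rescaling, with the zero-total-momentum three-body problem reduced by translations. Composing with the time rescaling by $\tau$ on the level $\{H_{sph}=h\}$ exhibits the reduced $\tilde H_{sph}$-flow as the regularized, translation-reduced three-body flow.

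I expect the main obstacle to be the collision set $\{z_{ij}=0\}$, where $Y_{ij}=\eta_{ij}/(2\bar z_{ij})$ and hence $F$ are singular, so that Theorem~\ref{th_tworeductions} literally applies only off this set. The genuine content of regularization is that $\tilde H_{sph}$ extends smoothly across these points. As noted before the statement, a binary-collision representative on $\cC_0$ has exactly one $z_{ij}=0$ but $\normtwo{z}>0$, so $|X(z)|^{2}>0$ there; moreover the rescaling factor $\tau$ vanishes to first order in each $\rho_{ij}=|z_{ij}|^{2}$, which is exactly the order of the simple poles $1/\rho_{ij}$ in the unrescaled kinetic and potential terms of (\ref{eq_Hsphxieta}). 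Inspection of (\ref{eq_Htildesph}) and (\ref{eq_W}) then shows that every such denominator has been cleared and replaced by a compensating numerator, so $\tilde H_{sph}$ is smooth at binary collisions. Verifying this cancellation, and thereby that the reduced $\tilde H_{sph}$-flow is a smooth flow agreeing with the reparametrized, squared translation-reduced three-body flow away from collision and extending it smoothly across collision, is the crux; the equivariance and partial-symplecticity bookkeeping is routine once the Levi-Civita identity is in hand.
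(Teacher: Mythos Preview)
Your proposal is correct and follows exactly the approach the paper intends: immediately before the theorem the authors write that ``Theorem~\ref{th_tworeductions} can be applied to show the equivalence of the Hamiltonian systems below, but we will omit most of the details,'' and indeed no proof is given in the text. Your invariance checks via the scaling and $\bar z$-translation symmetries, the partial-symplecticity via the pointwise Levi-Civita identity $\bar Y_{ij}\,dX_{ij}=\bar\eta_{ij}\,dz_{ij}$, the equivariance (scaling $\lambda\mapsto\lambda^2$, $\bar z$-translation $\mapsto$ $Y$-translation), and the appeal to Theorem~\ref{th_spherical} on the $X$-side are precisely the omitted details, and your treatment of the binary-collision locus---applying Theorem~\ref{th_tworeductions} only off $\{z_{ij}=0\}$ and then observing that $\tilde H_{sph}$ extends smoothly across it---matches the discussion preceding the theorem.
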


The quotient space of $T^*_{sph,\cC}\C^3_0$ by these symmetries can be identified with $T^*\S(\cC)\simeq T^*\RP(3)$.  The regularizing map induces an 8-to-1 branched covering map $f_{sph}:\S(\cW)\into\S(\cW)$, that is, an 8-to-1 branched covering
$\RP^3\mapsto \S^3$.  The map is a diffeomorphism except where (exactly) one of the $z_{ij}=0$ and $X_{ij}=0$.   To describe the branching behavior, note that in the two-dimensional complex subspace $\cW$, the set where $X_{12}=0$ is a complex line which corresponds to a circle $S^1$ in the sphere $\S(\cW)$.  The preimage of this circle will be 2 circles in the projective space $\S(\cC)$.  Altogether, the map is branched over 3 circles, each circle having pre-image 2 circles   in the projective space
$\RP^3$.

\subsubsection{Quadratic Parametrization of $\cC$}\label{sec_quadraticCsph}
Instead of writing Hamilton's equations for $\tilde H_{sph}$ we will describe a parametrization of the cone $\cC$ which leads to a lower-dimensional system of equations.  There is nice 2-to-1 parametrization by quadratic polynomials which is related to the double covers of $\RP^3$ by $\S^3$, of $SO(3)$ by the unit quaterions, and of $SO(3)$ by $SU(2)$.

Define a 2-to-1 mapping $g:\C^2\into \cC \subset \C^3$ by
\begin{equation}\label{eq_quadraticmap}
g: \qquad z_{12} = 2i\, x_1 x_2\qquad z_{31}=x_1^2+x_2^2\qquad z_{23} = i(x_1^2-x_2^2)
\end{equation}
where $x_1,x_2\in \C$.    This can be seen as a variant of a map used by Waldvogel \cite{Waldvogelreg} in his regularization of the planar problem.  But here we are applying the idea to the homogeneous variables $X$ which makes it easier to blow-up triple collision later on.

By homogeneity there is an induced map $g_{sph}:\S^3\into S(\cC)$.  The induced map is given by the same formula except that    $x,z$ now denote  homogenous coordinates for the points of $\S^3,\S^5$.  (This  double covering map gives another way to see that $S(\cC)$ is diffeomorphic to the real projective space $\RP^3$.)  The map $g_{sph}$ can be motivated in several  ways. First, after omitting the factors of $i$, it resembles the formulas for parametrizing Pythagorean triples.  Next, write $x_1=u_1-i\,u_2, x_2 = u_3+ i\,u_4$  and define the unit quaternion
$u = u_1+i\,u_2+j\,u_3+k\,u_4$.  Then the familiar conjugation map $v\mapsto uv \bar u$ where $v$ is an imaginary quaternion defines a rotation
$R(x)$ on the 3-dimensional space of $v$'s.   Up to a permutation of the columns,  $R(x) = A(z)$, the matrix of (\ref{eq_Aofz}),
and hence the conjugation map defines a map $x \mapsto z$.  As a variation on this construction,   define the unitary  $x$-dependent matrix
$$U = \m{\bar x_1& x_2\\ -\bar x_2&x_1}  \in SU(2).$$
Then the adjoint representation $v\mapsto U(x)vU(x)^{-1}$ on $su(2)\simeq\R^3$ produces the same rotation $R(x)$. 

The composition $f\circ g_{sph}$ of the regularizing map and the quadratic paremetrization gives a 16-to-1 branched cover 
$\S^3\mapsto\S^3$, which becomes 8-to-1 over the binary collisions.  Each binary collision is represented by a circle in the range which  has 2 preimage circles for a total of 6 branching circles in the domain.  Using stereographic projection, it is possible to get some idea of the behavior of this remarkable, regularizing map.  Figure~\ref{fig_regmap3D} shows the projection of the three-sphere.  The three transparent surfaces are tori
representing the collinear configurations with a given ordering of the bodies along the line.  These intersect in 6 circles representing the binary collisions.  The figure shows thin tubes around each of these circles.

\begin{figure}[h]
\scalebox{1.0}{\includegraphics{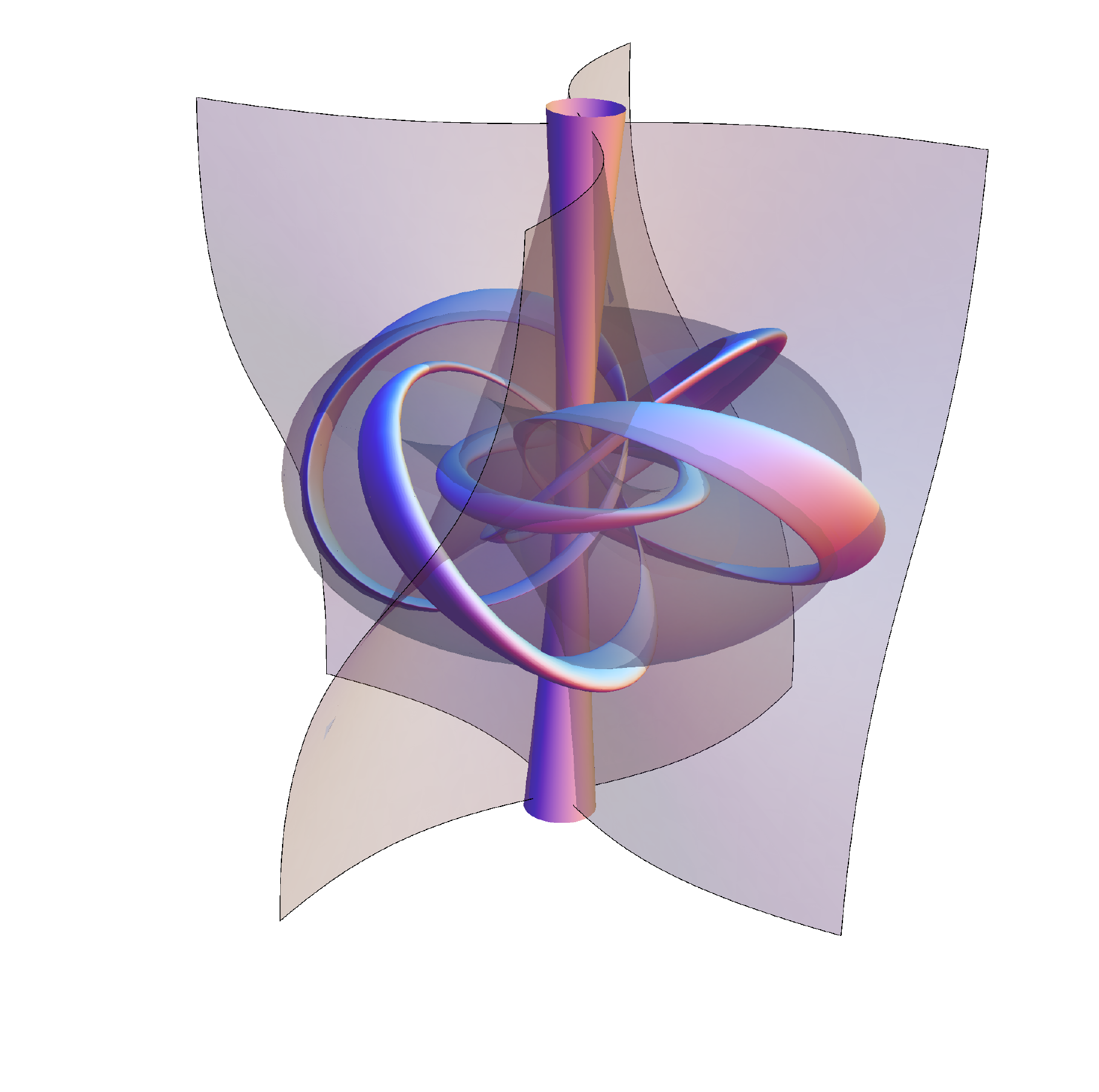}}
\caption{Stereographic projection of $\S^3$ showing the preimage under the regularizing map of the collinear configurations and small tubes around the binary collision circles.} \label{fig_regmap3D}
\end{figure}

To extend $g$ to a partially symplectic map $G:T^*\R^+\times T^*\C^2\into T^*\R^+\times \cC\times\C^{3*}$ we transform the momenta $\eta, y$ so that  $y = \eta  \overline{Df(z)}$or 
 $$\m{y_1& y_2} = \m{\eta_{12}& \eta_{31}& \eta_{23}}\m{-2i\bar{x}_2&-2i\bar{x}_1\cr 2\bar{x}_1&2\bar{x}_2\cr -2i\bar{x}_1&2i\bar{x}_2}$$
The value of $\eta$ is not uniquely determined but any two solutions will yield equivalent covectors and the same transformed Hamiltonian.  For example, we could take 
$$\eta_{12}=0\quad\eta_{31} = \fr14\left( \fr{y_1}{\bar{x}_1}+\fr{y_2}{\bar{x}_2}\right) \quad \eta_{23}=  \fr{i}4\left( \fr{y_1}{\bar{x}_1}-\fr{y_2}{\bar{x}_2}\right) .$$
$G$ restricts to $G:T^*\R^+\times T^*_{sph}\C^2\into T^*\R^+\times T^*_{sph,\cC}\C^{3*}$
where $T^*_{sph}\C^2 = \{(x,y):\re\metric{y,x}= 0\}$ and $T^*_{sph,\cC}\C^{3*}=\{(z,\eta):z\in\cC,\re\metric{\eta,z}=0\}$.

The regularized spherical Hamiltonian becomes
\begin{equation}\label{eq_Htildesphxy}
\begin{aligned}
\tilde H_{sph} &= 
\fr{\tau\, p_r^2}{2} + \fr{|X(x)|^2}{r^2\normtwo{x}^{12}}\left(\fr{|\pi'_1|^2\rho_{23}}{256m_1}+\fr{|\pi'_2|^2\rho_{31}}{256m_2}+  \fr{|\pi'_3|^2\rho_{12}}{256m_3}\right) - \fr{1}{r}W(x)  -h\tau\\
\pi'_1 &= y_1\bar{x_2}+ y_2\bar{x_1}\quad \pi'_2 =y_1\bar{x_2}-y_2\bar{x_1} \quad 
\pi'_3 = y_1\bar{x_1}- y_2\bar{x_2}\\
\rho_{12}&= |2x_1x_2|^2\quad\rho_{31}= |x_1^2+x_2^2|^2\quad\rho_{23}= |x_1^2-x_2^2|^2\\
\normtwo{z}^2 &=2\normtwo{x}^4= \rho_{12} +\rho_{31}+\rho_{23}\qquad |X(x)|^2 = \fr{m_1m_2\rho_{12}^2+m_1m_3\rho_{31}^2+m_2m_3\rho_{23}^2}{m_1+m_2+m_3}.
\end{aligned}
\end{equation}
Note that $\tilde H$ is invariant under the scaling symmetry $(x,y)\into(kx,k^{-1}y)$, $k>0$.   The corresponding Hamiltonian system on the $10$-dimensional space $T^*(\R^+\times \C^2)$ can be reduced to the expected $8$ dimensions by restricting to the invariant set $T^*\R^+\times T^*_{sph}\C^2$ and then passing to the quotient space under scaling.

\subsection{Projective Regularization}
Next we will get a regularized version of the reduced three-body problem.
Let $(r,X)$ be the radial-homogeneous  coordinates of section~\ref{sec_reduction}.   For a fixed angular momentum, we have the reduced Hamiltonian on $T^*\R^+\times T^*_{pr}\C^3$
$$
H_{\mu}(r,p_r,X,Z) = \fr12(p_r^2+\fr{\mu^2}{r^2}) +\fr{|X|^2}{r^2} K(Z)-  \fr1r V([X]).
$$
After making the Levi-Civita transformations, fixing an energy and changing timescale by the factor $\tau$ from (\ref{eq_tau}) we obtain a regularized reduced Hamiltonian
\begin{equation}\label{eq_Hreducedreg}
\tilde H_{\mu} = 
\fr{\tau \, p_r^2}{2}+\fr{
\tau\, \mu^2}{2r^2} + \fr{|X(z)|^2}{r^2\normtwo{z}^6}\left(\fr{|\pi_1|^2\rho_{23}}{8m_1}+\fr{|\pi_2|^2\rho_{31}}{8m_2}+  \fr{|\pi_3|^2\rho_{12}}{8m_3}\right) - \fr{1}{r}W(\xi)  -h\tau
\end{equation}
where the various quantities appearing in the formula are given by (\ref{eq_rhoij}), (\ref{eq_normXsq}), (\ref{eq_normzsq})and  (\ref{eq_W}).  The only difference between the spherical and projective Hamiltonians is the term involving $\mu^2$. We also impose  the extra constraint $\im\metric{\eta,z} = 0$ and there will be extra curvature terms in the differential equations.

To find the curvature terms we need to pull-back the Fubini-Study form under the regularizing map 
$X= f(z), X_{ij}=z_{ij}^2$.  The Fubini-Study metric on $z$-space is derived from the standard Hermitian metric on $\C^3$ by a formula analogous to (\ref{eq_FSmetric}).   We can express its restriction to $\cC$ in terms of a tangent vector field as we did in lemma~\ref{lemma_FSlemma}.  The analogous formula to (\ref{eq_FSwithT}) is
\begin{equation}\label{eq_FSzwithT}
\metrictwo{V,W}_{FS,\cC}  = \fr{\metrictwo{V,e}\metrictwo{e,W}}{\normtwo{z}^4}\qquad V,W\in T_X\cS
\end{equation}
where $e(z)$ is a Fubini-Study unit vectorfield tangent to $\cC$ and normal to $z$.  For example, observe that if $z\in \cC_0=\cC\setminus 0$ then the vectors $z,\bar z,  T$ form a Hermitian-orthogonal complex basis for $T_z\C^3$ where
\begin{equation}\label{eq_zcrosszbar}
T = {z}\times \bar z = (z_{31}{\bar z}_{23}-z_{23}{\bar z}_{31},z_{23}{\bar z}_{12}-z_{12}{\bar z}_{23},z_{12}{\bar z}_{31}-z_{31}{\bar z}_{12}).
\end{equation}
Hence we can take  $e = \fr{\normtwo{z}}{\normtwo{T}}T =  ({z}\times \bar z)/\normtwo{z}$.  This gives
\begin{equation}\label{eq_FStensorz}
\metrictwo{.,.}_{FS,\cC} =   \fr{\conj\Sigma \tensor {\Sigma}}{\normtwo{z}^4}  \end{equation}
where $\Sigma$ is given by 
any of the following formulas
\begin{equation}\label{eq_Sigma}
\begin{aligned}
\Sigma &=\fr{ \metrictwo{z\times\bar z,dz}}{\normtwo{z}}
= \frac{\normtwo{z}(z_{12}dz_{31}-z_{31}dz_{12})}{z_{23}} \\
&=  \frac{\normtwo{z}(z_{23}dz_{12}-z_{12}dz_{23})}{z_{31}}= \frac{\normtwo{z}(z_{31}dz_{23}-z_{23}dz_{31})}{z_{12}}.
\end{aligned}
\end{equation}
For example, the first version is just $\Sigma = \metrictwo{e,dz}$ and  the second is  obtained by eliminating  $z_{23}, dz_{23}$ using the equations $z^2_{23} = -z^2_{12}-z^2_{31}$ and $z_{23}dz_{23} = -z_{12}dz_{12}-z_{31}dz_{31}.$

Using these formulas, we find that the pull-back of the Fubini-Study metric on $\cW$ is a conformal multiple of the Fubini-Study metric on $\cC$\begin{lemma}\label{lemma_FSconformal}
The pull-back of the Fubini-Study metric on $\cW$ is given by  
$$f^*\metric{.,.}_{FS,\cW} = \lambda(z)\metrictwo{.,.}_{FS,\cC}$$
 where the conformal factor is
\begin{equation}\label{eq_lambda}
\lambda =  \fr{4 m_1m_2m_3\,\rho_{12} \rho_{31} \rho_{23}\normtwo{z}^2}{m|X(z)|^4} = \fr{4 m\,m_1m_2m_3 (\rho_{12}+ \rho_{31}+\rho_{23})\,\rho_{12} \rho_{31} \rho_{23}}{(m_1m_2 \rho_{12}^2+m_1m_3 \rho_{31}^2+m_2m_3 \rho_{23}^2)^2}
\end{equation}
and where  $\rho_{ij}=|z_{ij}|^2$.
\end{lemma}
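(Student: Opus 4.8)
The plan is to exploit the fact that, because $\bP(\cW)$ and $\bP(\cC)$ are each complex curves (complex dimension one), both Fubini-Study metrics are \emph{rank-one} Hermitian tensors built from a single $(1,0)$-form. Concretely, $\metric{.,.}_{FS,\cW}$ is the scalar multiple of $\bar\sigma\tensor\sigma$ recorded in (\ref{eq_FStensorX}), and $\metrictwo{.,.}_{FS,\cC}$ is the scalar multiple of $\conj\Sigma\tensor\Sigma$ recorded in (\ref{eq_FStensorz}). Since the pull-back of a rank-one form is again rank-one, $f^*\metric{.,.}_{FS,\cW}$ is automatically a scalar multiple of $\metrictwo{.,.}_{FS,\cC}$; the entire content of the lemma is therefore to evaluate that scalar. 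First I would pull back the one-form $\sigma$, then identify $f^*\sigma$ with an explicit multiple of $\Sigma$, and finally collect the remaining scalar prefactors.

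For the main computation, differentiating $X_{ij}=z_{ij}^2$ gives $dX_{ij}=2z_{ij}\,dz_{ij}$, which I substitute into the first representative $\sigma = X_{31}\,dX_{12}-X_{12}\,dX_{31}$ from (\ref{eq_sigma}) to obtain
$$f^*\sigma = 2z_{12}z_{31}\,(z_{31}\,dz_{12}-z_{12}\,dz_{31}).$$
The bracketed one-form is precisely the numerator in the second representative of $\Sigma$ in (\ref{eq_Sigma}), so that $z_{31}\,dz_{12}-z_{12}\,dz_{31}=-z_{23}\Sigma/\normtwo{z}$, and hence
$$f^*\sigma = -\fr{2z_{12}z_{31}z_{23}}{\normtwo{z}}\,\Sigma.$$
This is the one step requiring a choice: one must match representatives of the two one-forms, and selecting the pair built from the same index set $\{12,31\}$ is what makes the identification immediate. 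I expect this matching (rather than any deep difficulty) to be the only place an error could creep in.

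To finish, I take the Hermitian square. Using $|z_{ij}|^2=\rho_{ij}$ from (\ref{eq_rhoij}),
$$f^*(\bar\sigma\tensor\sigma) = \fr{4\rho_{12}\rho_{31}\rho_{23}}{\normtwo{z}^2}\,\conj\Sigma\tensor\Sigma.$$
Multiplying by the pulled-back prefactor $m_1m_2m_3/(m|X(z)|^4)$ from (\ref{eq_FStensorX}) and replacing $\conj\Sigma\tensor\Sigma=\normtwo{z}^4\,\metrictwo{.,.}_{FS,\cC}$ via (\ref{eq_FStensorz}) yields
$$f^*\metric{.,.}_{FS,\cW} = \fr{4m_1m_2m_3\,\rho_{12}\rho_{31}\rho_{23}\,\normtwo{z}^2}{m|X(z)|^4}\,\metrictwo{.,.}_{FS,\cC},$$
which is the first expression for $\lambda$ in (\ref{eq_lambda}). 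The second expression then follows by substituting $\normtwo{z}^2=\rho_{12}+\rho_{31}+\rho_{23}$ from (\ref{eq_normzsq}) and the explicit mass norm $|X(z)|^2$ from (\ref{eq_normXsq}). The only genuinely conceptual ingredient is the rank-one observation that guarantees the pull-back is conformal rather than a more general change of metric; everything else is substitution and bookkeeping.
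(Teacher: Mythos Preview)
Your proposal is correct and follows essentially the same route as the paper: pull back $\sigma$ through the squaring map, recognize the result as a scalar multiple of $\Sigma$, and assemble the prefactors from (\ref{eq_FStensorX}) and (\ref{eq_FStensorz}) to read off $\lambda$. Your bookkeeping is in fact a bit more explicit than the paper's---you track the $1/\normtwo{z}$ coming from the representative of $\Sigma$ in (\ref{eq_Sigma}) and the sign, whereas the paper's intermediate display $f^*\sigma = 2z_{12}z_{31}z_{23}\Sigma$ suppresses that factor (it reappears only when (\ref{eq_FStensorz}) is invoked at the end)---but the argument is the same.
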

\begin{proof}
Equation (\ref{eq_FStensorX}) shows that we need to compute the pullback $f^*\sigma$, where $\sigma$ is given by (\ref{eq_sigma}).
Using the first formula for $\sigma$ gives
$$f^*\sigma = 2z_{12}^2z_{31}dz_{31} - 2z_{31}^2z_{12}dz_{31} =  2z_{12}z_{31}z_{23}\Sigma.$$
Hence
$$f^*\metrictwo{.,.}_{FS,\cW} = \fr{m_1m_2m_3}{m |X(z)|^4}\,f^* \conj{\sigma}\tensor{f^*\sigma}
= \fr{4 m_1m_2m_3}{m |X(z)|^4}|z_{12}|^2|z_{31}|^2|z_{23}|^2 \conj{\Sigma}\tensor{\Sigma}.$$
Now use (\ref{eq_normXsq}), (\ref{eq_normzsq}) and (\ref{eq_FStensorz}) to get the formula in the proposition.
\end{proof}

Similarly we can pull-back the Fubini-Study cometric on $\cW$ and compare it with the dual Fubini-Study metric on $\cC$.  The formula analogous to (\ref{eq_dualFSwithT}) is
\begin{equation}\label{eq_FSC}
\normtwo{\eta}^2_{FS,\cC} = |\pairing{\eta,e}|^2= \fr{|\pairing{\eta,z\times\bar z}|^2}{\normtwo{z}^2}\qquad \eta\in T^*_{z,pr}\C^3.
\end{equation}
This is a degenerate quadratic form, invariant under $z$-translation of $\eta$, which represents the   Fubini-Study cometric on $\cC$.  

The next lemma relates this to the pull-back of the Fubini-Study cometric on $\cW$ and hence, to the shape kinetic energy.
\begin{lemma}\label{lemma_FSzdual}
The pull-back of the Fubini-Study cometric on $\cW$ is 
$$F^*\normtwo{\,.\,}^2_{FS,\cW} = \lambda^{-1}\normtwo{\,.\,}^2_{FS,\cC}$$
where $\lambda$ is given by (\ref{eq_lambda}).  Hence the shape kinetic energy in regularized coordinates is 
$$\fr12 \lambda^{-1}\normtwo{\eta}^2_{FS,\cC} = \fr12\fr{|\pairing{\eta,z\times\bar z}|^2}{\lambda\,\normtwo{z}^2}.$$
\end{lemma}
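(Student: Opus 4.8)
The plan is to obtain the cometric statement by dualizing the metric relation already established in Lemma \ref{lemma_FSconformal}. The previous lemma shows that the pulled-back Fubini-Study metric on $\cW$ and the Fubini-Study metric on $\cC$ are conformally related by the positive factor $\lambda(z)$, namely $f^*\metric{.,.}_{FS,\cW} = \lambda\,\metrictwo{.,.}_{FS,\cC}$. Since both are (degenerate) Hermitian forms whose kernels are exactly the complex line spanned by $z$, they induce genuine metrics on the one-dimensional complex tangent space $T_{[z]}\bP(\cC)$, and on this nondegenerate quotient the cometric is the reciprocal of the metric. Multiplying a nondegenerate metric by $\lambda$ multiplies the dual norm by $\lambda^{-1}$; this gives $F^*\normtwo{.}^2_{FS,\cW} = \lambda^{-1}\normtwo{.}^2_{FS,\cC}$ directly, and the explicit formula follows by substituting the expression for the cometric from (\ref{eq_FSC}).

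Concretely, I would first check that the momentum transformation defining the extension $F$ of $f$ is precisely the pullback of covectors dual to the pushforward of tangent vectors by $f$, so that $F^*$ on cometrics is genuinely dual to $f^*$ on metrics. This is the content of the Levi-Civita extension $Y_{ij} = \eta_{ij}/(2\bar z_{ij})$ being the canonical cotangent lift; the pairing is preserved, $\pairing{\eta,dz}=\pairing{Y,dX}$, which is exactly what makes dualization legitimate. Given that, the inverse-conformal-factor rule for cometrics is immediate: on the one complex dimension where both forms are nondegenerate, a unit vector $e$ for $\metrictwo{.,.}_{FS,\cC}$ has $f^*$-length $\sqrt{\lambda}$, so the dual unit covector scales by $1/\sqrt{\lambda}$, giving the $\lambda^{-1}$ factor on squared conorms.

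For the final displayed formula, I would combine this with the regularized expression (\ref{eq_FSC}) for $\normtwo{\eta}^2_{FS,\cC}$, namely $\normtwo{\eta}^2_{FS,\cC} = |\pairing{\eta,z\times\bar z}|^2/\normtwo{z}^2$, to write the shape kinetic energy as
\[
\fr12\lambda^{-1}\normtwo{\eta}^2_{FS,\cC} = \fr12\fr{|\pairing{\eta,z\times\bar z}|^2}{\lambda\,\normtwo{z}^2},
\]
and then recall from Theorem \ref{thm_FSthm} and Lemma \ref{lemma_FSlemma} that $\fr12\normtwo{Z}^2_{FS,\cW} = |X|^2 K(Z)$ is the shape kinetic energy, so the pulled-back version is its value in $z$-coordinates. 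The main obstacle I anticipate is purely bookkeeping rather than conceptual: one must be careful that $f^*$ acts on the degenerate two-form on the full space $\C^3$ while the duality statement lives on the nondegenerate quotient $T\bP(\cC)$, so I would phrase the argument entirely on the quotient (or equivalently restrict to the $e$-direction orthogonal to both $z$ and $\bar z$) to avoid inverting a degenerate form. Verifying that the kernels of $f^*\metric{.,.}_{FS,\cW}$ and $\metrictwo{.,.}_{FS,\cC}$ coincide — both being the line through $z$, since $f$ sends the $z$-scaling direction to the $X$-scaling direction — is the one point that needs explicit mention before the reciprocal rule can be invoked.
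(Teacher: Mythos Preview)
Your argument is correct, and it takes a genuinely different route from the paper's proof.  The paper proceeds by direct computation: it pulls back the one-form $\alpha$ of (\ref{eq_alpha}) through $F$ using each of the three alternative formulas for $\alpha$, adds the resulting identities to obtain
\[
F^*\alpha = \fr{|X(z)|^2}{\normtwo{z}^2}\,\pairing{\bar\eta,z\times\bar z},
\]
and then squares and inserts the mass constants via (\ref{eq_FSdualalpha}) to read off the conformal factor $\lambda^{-1}$.  In other words, the paper essentially repeats for the cometric the same explicit calculation that Lemma~\ref{lemma_FSconformal} carried out for the metric.

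Your approach instead observes that the two calculations are dual to one another, so that Lemma~\ref{lemma_FSconformal} already implies Lemma~\ref{lemma_FSzdual}.  The key point you identify --- that the cotangent extension $Y_{ij}=\eta_{ij}/(2\bar z_{ij})$ is exactly the fibrewise inverse of $f^*$ on covectors, so that $F^*$ applied to a cometric equals the cometric of $f^*$ applied to the metric --- is the right one, and your care to pass to the one-complex-dimensional quotient $T_{[z]}\bP(\cC)$ before invoking the reciprocal rule cleanly handles the degeneracy.  This buys you economy: no new computation is needed.  What the paper's route buys is an explicit intermediate formula for $F^*\alpha$ and an independent check of the factor $\lambda$, at the cost of redoing work already implicit in the previous lemma.
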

\begin{proof}
Equation (\ref{eq_FSdualalpha}) shows that we need to compute the pullback $F^*\alpha$, where $\alpha$ is given by (\ref{eq_alpha}).
Using the second formula for $\alpha$ gives
$$\fr{|z_{23}|^2}{|X|^2}F^*\alpha = \fr{(\eta_{31}\bar z_{12}-\eta_{12}\bar z_{31})z_{23}}{2\bar z_{12}\bar z_{31}\bar z_{23}}$$
and there are two similar equations from the third and fourth formulas.  Adding these gives
$$F^*\alpha = \fr{|X(z)|^2}{\normtwo{z}^2}\pairing{\bar\eta,z\times\bar z}.$$
Therefore,
$$F^*\normtwo{\eta}^2_{FS,\cW} = \fr{m |X(z)|^4 |\pairing{\eta,z\times\bar z}|^2}{4m_1m_2m_3\rho_{12} \rho_{31} \rho_{23}\normtwo{z}^4}=
\fr{m |X(z)|^4}{4m_1m_2m_3\rho_{12} \rho_{31} \rho_{23}\normtwo{z}^2}\normtwo{\eta}^2_{FS,\cC}.$$
Comparing with the formula for $\lambda$ completes the proof.
\end{proof}

It follows from the lemma that we have an equivalent reduced, regularized Hamiltonian
\begin{equation}\label{eq_HreducedregFS}
\begin{aligned}
\tilde H_{\mu} &= 
\fr{\tau \, p_r^2}{2}+\fr{\tau\, \mu^2}{2r^2} + \fr{\tau\normtwo{\eta}_{FS,\cC}^2}{2\lambda(z) r^2} - \fr{1}{r}W(\xi)  -h\tau\\
&=\fr{\tau \, p_r^2}{2}+\fr{\tau\, \mu^2}{2r^2} + \fr{m |X(z)|^4 |\pairing{\eta,z\times\bar z}|^2}{8m_1m_2m_3 r^2\normtwo{z}^{10}} - \fr{1}{r}W(\xi)  -h\tau
 \end{aligned}
\end{equation}

The factor of $\l$ in the Fubini-Study two-form and the factor of $\l^{-1}$ in the shape kinetic energy cancel out in the interior product defining the curvature term.  Remembering the timescale factor $\tau$ we find that the the curvature  term is
 \begin{equation}
 \label{eq_curvterm1}
 T_{curv} = -\fr{2\mu\tau}{r^2} \,i\,\eta
 \end{equation} 
which is added to the right hand side (i.e. to $-\fr{\partial H}{\partial z}$) of the Hamilton's
equation for $\dot \eta$.

\begin{theorem}\label{th_reducedregularized}
The Hamiltonian flow of $\tilde H_{\mu}$ on $T^*\R^+\times T^*\C^3_0$  has an invariant set $T^*\R^+\times T^*_{pr,\cC}\C^3$ where $\metric{\eta,z} = 0$ and $z^2_{12}+z^2_{31}+z^2_{23}=0$ with symplectic structure given by the restriction of the standard form minus $2\mu\lambda\Omega_{FS}$.  The quotient of the restricted flow by the complex scaling symmetry and by $\bar z$-translations of $\eta$ represents the three-body problem with zero total momentum and angular momentum $\mu$, with regularized binary collisions, reduced by translations and rotations.
\end{theorem}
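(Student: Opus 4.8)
The plan is to obtain this theorem from Theorem~\ref{th_tworeductions} in exactly the way Theorem~\ref{th_reduced2} was obtained for the unregularized reduced problem, with the Levi-Civita substitution and the Poincar\'e time rescaling supplying the new content. I would take the two Hamiltonian systems to be: on the target side, the reduced problem of Theorem~\ref{th_reduced2} carried by $H_\mu(r,p_r,X,Z)$ on $T^*\R^+\times T^*_{pr,\cW}\C^3$, whose quotient flow already represents the three-body problem reduced by translations and rotations at angular momentum $\mu$; and on the source side, the regularized system carried by $\tilde H_\mu$ in the variables $(r,p_r,z,\eta)$. The connecting map is the extended regularizing map $F$, given by $X_{ij}=z_{ij}^2$ together with the induced momentum rule $Z_{ij}=\eta_{ij}/(2\bar z_{ij})$ (the extension called $Y_{ij}=\eta_{ij}/(2\bar z_{ij})$ above), and I would exhibit it as an equivariant, partially symplectic pseudo-inverse pair in the sense of Definition~\ref{def_partialinverse}.

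The verification splits into three checks. \emph{Equivariance:} under $z\mapsto cz,\ \eta\mapsto\eta/\bar c$ one computes $X=z^2\mapsto c^2X$ and $Z\mapsto Z/\overline{c^2}$, so $F$ intertwines complex scaling on $z$ with (squared) complex scaling on $X$; under $\eta_{ij}\mapsto\eta_{ij}+2c\bar z_{ij}$ one gets $Z_{ij}\mapsto Z_{ij}+c$, so the $\bar z$-translation symmetry of $\eta$ matches $Z$-translation. Moreover $F$ carries $\{z\in\cC_0,\ \metric{\eta,z}=0\}$ into $\{X\in\cW_0,\ \metric{Z,X}=0\}$; note that the cone condition $z_{12}^2+z_{31}^2+z_{23}^2=0$ is precisely the zero level of the momentum map for the $\bar z$-translation action, so it is preserved. \emph{Partially symplectic for the twisted forms:} the Kepler computation of the review, applied coordinatewise, shows $\re\metric{Z,dX}$ pulls back to $\re\metric{\eta,dz}$, so the standard symplectic forms correspond; and by Lemma~\ref{lemma_FSconformal} the Fubini-Study correction satisfies $f^*\Omega_{FS,\cW}=\lambda\,\Omega_{FS,\cC}$, so the twisted form $\Omega-2\mu\Omega_{FS,\cW}$ pulls back to exactly $\Omega-2\mu\lambda\,\Omega_{FS,\cC}$, the structure claimed. \emph{Intertwining of Hamiltonians:} substituting $X_{ij}=z_{ij}^2$ into $H_\mu$ and invoking Lemma~\ref{lemma_FSzdual} to rewrite the shape kinetic energy as $\tfrac12\lambda^{-1}\normtwo{\eta}^2_{FS,\cC}$ reproduces the bracketed part of $\tilde H_\mu$ in (\ref{eq_HreducedregFS}) before the $\tau$-rescaling. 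With these in hand, Theorem~\ref{th_tworeductions} identifies the quotient flow of $H_\mu\circ F$ with that of $H_\mu$, and Theorem~\ref{th_reduced2} identifies the latter with the reduced three-body problem.

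It then remains only to fold in the time change, which I would do by the Poincar\'e trick exactly as in the Kepler review: on the level set $\{H_\mu\circ F=h\}=\{\tilde H_\mu=0\}$ the vector field of $\tilde H_\mu=\tau\,(H_\mu\circ F-h)$ is $\tau$ times that of $H_\mu\circ F$, so the two flows agree up to reparametrization and $\tilde H_\mu$ generates the regularized flow. The one genuine difficulty—and the step I expect to be the real obstacle—is that $F$ degenerates at the binary collisions, where some $z_{ij}=0$ and the rule $Z_{ij}=\eta_{ij}/(2\bar z_{ij})$ blows up, so $F$ is merely a branched covering and the preceding equivalence is a priori valid only on the open dense complement of the collision circles. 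The whole point of the construction is that the factor $\tau$ of (\ref{eq_tau}) vanishes to exactly the compensating order there: since at a binary collision precisely one $z_{ij}=0$ while $\normtwo{z}>0$, every term of (\ref{eq_HreducedregFS}) is manifestly finite and smooth, so $\tilde H_\mu$ and its Hamiltonian flow extend analytically across the collision set, furnishing the regularized completion of the reduced flow. I would close by noting that the $\bar z$-translation level $\cC$ and the branching over three circles are precisely those already catalogued in the spherical case of Theorem~\ref{th_sphericalregularized}.
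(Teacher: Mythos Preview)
Your proposal is correct and follows exactly the route the paper intends: the paper explicitly says (at the end of section~\ref{sec_geometryC}) that ``Theorem~\ref{th_tworeductions} can be applied to show the equivalence of the Hamiltonian systems below, but we will omit most of the details,'' and then states Theorem~\ref{th_reducedregularized} without proof. You have supplied precisely those omitted details---the equivariance check, the pull-back of the canonical one-form via the coordinatewise Kepler computation, Lemma~\ref{lemma_FSconformal} for the twisted symplectic correction, Lemma~\ref{lemma_FSzdual} for the shape kinetic energy, the Poincar\'e trick for the timescale, and the smooth extension across the branching locus---all of which are set up in the surrounding text for exactly this purpose.
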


The regularized, reduced Hamiltonian $\tilde H_{\mu}$, together with the curvature term gives a system of differential equations on the $14$-dimensional space $T^*(\R^+\times \C^3)$ with variables $(r,p_r,z,\eta)$.   
The six-dimensional quotient space of $T^*\R^+\times T^*_{pr,\cC}\C^3$ is diffeomorphic to $T^*\R^+\times T^*\bP(\cC)$.
Instead of writing these $14$-dimensional differential equations we will describe several ways to parametrize the regularized shape sphere $P(\cC)$ to arrive at lower-dimensional systems of equations.

\subsubsection{Quadratic Parametrization of the Regularized Shape Sphere}\label{sec_quadraticCpr}
We can parametrize $\cC$ using the same quadratic map  $g:\C^2\into \cC \subset \C^3$ 
\begin{equation*}
z_{12} = 2i\, x_1 x_2\qquad z_{31}=x_1^2+x_2^2\qquad z_{23} = i(x_1^2-x_2^2)
\end{equation*}
as in section~\ref{sec_quadraticCsph}.  Since $g$ is homogeneous with respect to complex scaling, it induces a map $g_{pr}: \CP^1\into P(\cC)$ from the projective line onto $P(\cC)$.   Although $g$ and the induced map $g_{sph}$ of $\S^3$ in section~\ref{sec_quadraticCsph} are both 2-to-1, the extra quotienting makes $g_{pr}$  a diffeomorphism.  This shows again that $P(\cC)$ is diffeomorphic to the two-sphere.  The same partially symplectic extension $G:T^*\R^+\times T^*\C^2\into T^*\R^+\times \cC\times\C^{3*}$ restricts to a map  $G:T^*\R^+\times T^*_{pr}\C^2\into T^*\R^+\times T^*_{pr,\cC}\C^{3}$ where $T^*_{pr}\C^2 = \{(x,y):\metric{y,x}= 0\}$ and $T^*_{pr,\cC}\C^{3}=\{(z,\eta):z\in\cC,\metric{\eta,z}=0\}$.

If we use (\ref{eq_HreducedregFS}) together with the formula (\ref{eq_FSC}) for the dual Fubini-Study metric we obtain, after some simplification, 
the reduced, regularized Hamiltonian 
\begin{equation}\label{eq_Htildemuxy}
\begin{aligned}
\tilde H_{\mu} &= 
\fr{\tau\, p_r^2}{2} + \fr{\tau\, \mu^2}{2r^2} +  \fr{\tau}{4\lambda r^2}|y_1x_2-x_1y_2|^2 - \fr{1}{r}W(x)  - h\tau \\
W(x) &= \fr{|X(x)|}{\normtwo{x}^{12}}\left(m_1m_2\rho_{31}\rho_{23} + m_1m_3\rho_{12}\rho_{23} + m_2m_3\rho_{12}\rho_{31}\right) \\
\rho_{12}&= |2x_1x_2|^2\quad\rho_{31}= |x_1^2+x_2^2|^2\quad\rho_{23}= |x_1^2-x_2^2|^2\\
2\normtwo{x}^4 &=\rho_{12} +\rho_{31}+\rho_{23}\qquad |X(x)|^2 = \fr{m_1m_2\rho_{12}^2+m_1m_3\rho_{31}^2+m_2m_3\rho_{23}^2}{m_1+m_2+m_3}
\end{aligned}
\end{equation}
We have
\begin{equation}
\label{eq_reparameterization}
\tau = \fr{\rho_{12}\rho_{31}\rho_{23}}{8\normtwo{x}^{12}}\qquad \fr{\tau}{\lambda} =  \fr{m|X(x)|^4}{64 m_1m_2m_3\,\normtwo{x}^{16}}.
\end{equation}

We have the complex constraint $\pairing{y,x} = 0$ and the system is invariant under complex scaling symmetry $(x,y)\into(kx,y/\bar k)$, $k\in\C_0$.  Applying the constraint and passing to the quotient space reduces the dimension from 10 to 6.  As usual, Hamilton's differential equations will have a curvature term
 $$T_{curv} =  -\fr{2\mu\tau}{r^2} \,i\,y$$
 added to the $\dot y$ equation.
 
 \subsubsection{Dynamics in regularized affine coordinates. }
 \label{sec_regularizedaffine}
As in section~\ref{sec_parametrizingPW} we can use affine local coordinates on $\CP^1$.    Every projective  point $[x_1,x_2] \in\CP^1$ with $x_1\ne 0$ has a representative of the form
$$[x_1,x_2] = [1, z]= [1,x+i\,y]$$
where $x,y\in\R$.
The appropriate momentum substitution is
$$y_1=-\bar{z}\zeta \qquad y_2 = \zeta.$$
where $\zeta =  \alpha+i\beta\in \C^*$ is a momentum vector dual to $z$.

We get a Hamiltonian system with 6 degrees of freedom
\begin{equation}
\begin{aligned}
\label{eq_affineHamiltonian}
\tilde H_{\mu} &= 
\fr{\tau\, p_r^2}{2} + \fr{\tau\, \mu^2}{2r^2} +  \fr{\tau}{2\lambda r^2}(1+x^2+y^2)^2(\a^2+\b^2)- \fr{1}{r}W(x,y) - h\tau \\
W(x,y) &= \fr{|X(x,y)|}{(1+x^2+y^2)^{6}}\left(m_1m_2\rho_{31}\rho_{23} + m_1m_3\rho_{12}\rho_{23} + m_2m_3\rho_{12}\rho_{31}\right) \\
\rho_{12}&= 4(x^2+y^2)\quad \rho_{31}=(1+x^2-y^2)^2+4x^2y^2\quad\rho_{23}=  (1-x^2+y^2)^2+4x^2y^2\\
\normtwo{x}^2 &= 1+x^2+y^2 \qquad |X(x,y)|^2 = \fr{m_1m_2\rho_{12}^2+m_1m_3\rho_{31}^2+m_2m_3\rho_{23}^2}{m_1+m_2+m_3}
\end{aligned}
\end{equation}
 The Fubini-Study form becomes
 $$\Omega_{FS} = \fr{\,dx \wedge dy}{ (1 + x^2 + y^2)^2}$$
 and
$$
\tau = \fr{\rho_{12}\rho_{31}\rho_{23}}{8(1+x^2+y^2)^{6}}\qquad \fr{\tau}{\lambda} =  \fr{m|X(x,y)|^4}{64 m_1m_2m_3\,(1+x^2+y^2)^{8}}.
$$

Hamilton's equations with the curvature term are 
\begin{equation}
\begin{aligned}
\dot r &= \tau p_r  \\
\dot p_r &= \fr{1}{r^3} [ \fr{\tau}{\lambda}(1+|z|^2)^2|\zeta|^2 +  \tau \mu^2 ]- \fr1{r^2}W(x,y) \\
\dot x   &=  \fr{\tau}{ \lambda} \fr{(1+x^2+y^2)^2}{r^2} \alpha  \\
\dot y   &= \fr{\tau}{\lambda} \fr{(1+x^2+y^2)^2}{r^2}\beta  \\
\dot \alpha  &= \fr{1}{r^2} [-\Lambda_x+ 2 \tau \mu \beta  -\fr{\tau_x } {2} \mu^2]
 + \fr{1}{r}W_x  - \tau_x [ \fr{p_r^2}{2} - h ]  \\
 \dot \beta   &= \fr{1}{r^2} [-\Lambda_y - 2 \tau \mu \alpha  -\fr{\tau_y } {2} \mu^2]
 + \fr{1}{r}W_y  - \tau_y [ \fr{p_r^2}{2} - h ] 
\end{aligned}
\end{equation}
where
\begin{equation*} 
\Lambda(x,y,\a,\b) = \fr{\tau}{2\lambda}(1+x^2+y^2)^2 (\a^2+\b^2). 
\end{equation*}

 \subsubsection{Dynamics in regularized spherical coordinates.}
 \label{sec_regularizedshapesphereround}
Instead of using projective or local affine coordinates, one can map the regularized shape sphere to the unit sphere in $\R^3$. 
A particularly elegant way to do this is to use the diffeomorphism between $\cC$ and $SO(3)$ described in section~\ref{sec_geometryC}.

Given $z\in\cC$ we write $z = a+i\, b$ where $a,b\in\R^3$ and then define $c = a\times b\in\R^3$.  We saw that the matrix
 \begin{equation*}
 A(z) = \fr1s\m{a_{12}&a_{12}&c_{12}/s\\a_{31}&b_{31}&c_{31}/s\\a_{23}&b_{23}&c_{23}/s}
 \end{equation*}
is in $SO(3)$, where $s^2=|z|^2/2 = |a|^2 = |b|^2 = |c|$

We will work homogeneously and define a map $g:\cC\into\R^3$
$$g(z) = c = \re(z)\times \im(z).$$
By homogeneity, there is an induced map $g_{pr}:\bP(\cC)\into \S(\R^3)\simeq \S^2$ where we view $z$ and $c$ as homogeneous coordinates with respect to complex and positive real scaling respectively.  

The orthogonality of the matrix $A(z)$ can be used to derive some useful formulas.  
Since the rows as well as the columns are unit vectors we find
$$\rho_{ij} = |z_{ij}|^2 = a_{ij}^2+b_{ij}^2 =  \fr{|c|^2 - c_{ij}^2}{|c|}$$
which gives the beautiful formulas
\begin{equation}\label{eq_rhoijc}
\rho_{12} = \fr{c_{31}^2+c_{23}^2}{|c|}\qquad \rho_{31} = \fr{c_{12}^2+c_{23}^2}{|c|}\qquad \rho_{23} = \fr{c_{12}^2+c_{31}^2}{|c|}
\end{equation}
for the homogeneous mutual distances.  Similar formulas were given by Lemaitre \cite{Lemaitre2}.

Next, consider the quantity
$$\bar{z}_{12}z_{31} = a_{12}a_{31}+b_{12}b_{31} + i\,(a_{12}b_{31}-a_{31}b_{12})
= (a_{12},b_{12})\cdot (a_{31},b_{31}) + i\,c_{23}.$$
Using the orthogonality of the rows we can express this entirely in terms of $c$.  We find
$$\bar{z}_{12}z_{31} = -\fr{c_{12}c_{31}}{|c|} + i\,c_{23}\quad \bar{z}_{23}z_{12} = -\fr{c_{23}c_{12}}{|c|} + i\,c_{31}\quad \bar{z}_{31}z_{23} = -\fr{c_{31}c_{23}}{|c|} + i\,c_{12}.$$
These last formulas allow us to write down local inverses for $g_{pr}$.  Namely consider the map
$h_{12}:\R^3\into\C^3$
$$\begin{aligned}
h_{12}(c) &=  |c| \bar{z}_{12}(z_{12},z_{31},z_{23}) = |c| (\bar{z}_{12}z_{12},\bar{z}_{12}z_{31},\bar{z}_{12}z_{23})\\
&= (c_{31}^2+c_{23}^2, -c_{12}c_{31} + i\,|c|c_{23},-c_{12}c_{23} - i\,|c|c_{31}).
\end{aligned}$$
If $z_{12}\ne 0$, then $h_{12}(c)$ represents the same projective point in $\bP(\cC)$ as $z$ does so $h_{12}(c)$ give a local inverse for the projective map $g_{pr}$.  There are similar partial inverses $h_{31}, h_{23}$.

To find the regularized, reduced Hamiltonian system we need to convert the Fubini-Study metric and its dual norm
(i.e. cometric)  to $c$-coordinates.
The spherical analogue of the Fubini-Study metric is the spherical metric
$$\metric{.,.}_{sph} = \fr{|c|^2\metric{dc,dc}-\metric{dc,c}\metric{c,dc}}{|c|^4}= \fr{|c\times dc|^2}{|c|^4}$$ 
where we are using the Euclidean inner product on $\R^3$.
We will see that   
$$g^*\metric{.,.}_{sph} = 2\metrictwo{.,.}_{FS,\cC} = \fr{2|\metrictwo{z\times\bar z,dz}|^2}{\normtwo{z}^6}.$$
To see this, note that $z\times \bar z = -2i\, a\times b = -2i\,c$.  Hence
$$dc = \fr{i}{2}(dz\times \bar z + z\times d\bar z).$$
This, together with the fact that $\metrictwo{z,\bar z}=0$ on $\cC$ leads, after some algebra, to the pull-back formula.  
Correspondingly,  the Euclidean solid angle form pulls back to twice the   Fubini-Study form, hence
 $$\lambda\Omega_{FS,\C} = g^*\fr{\lambda}{2|c|^3}(c_1 dc_2\wedge dc_3 +c_2 dc_3\wedge dc_1 +c_3 dc_1\wedge dc_2).$$

Let $\g\in\R^{3*}$ be a dual momentum vector to $c\in\R^3$.  From the spherical scaling we will have
$\g\cdot c = 0$.  If we split the momentum vector $\eta$
into real and imaginary parts, $\eta = u+i\, v$ then the momenta transform via
$$u=b\times \gamma\quad v=-a\times\gamma\qquad\qquad \gamma = -\fr{u\cdot c}{|c|^2}\,a-\fr{v\cdot c}{|c|^2}\,b.$$
From this we find that the dual spherical norm
$\norm{\g}^2_{sph} = |\g \times c|^2 = |c|^2|\g|^2$ 
corresponds to $\fr12\normtwo{.}^2_{FS,\cC}$.  So we get the reduced, regularized Hamiltonian

\goodbreak
\begin{equation}\label{eq_Htildemuc}
\begin{aligned}
\tilde H_{\mu} &= 
\fr{\tau \, p_r^2}{2}+\fr{\tau\, \mu^2}{2r^2} + \fr{\tau|c|^2|\g|^2}{\lambda(c) r^2} - \fr{1}{r}W(c)  -h\tau \\
W(c) &= \fr{|X(c)|}{8|c|^6}\left(m_1m_2\rho_{31}\rho_{23} + m_1m_3\rho_{12}\rho_{23} + m_2m_3\rho_{12}\rho_{31}\right) \\
\rho_{12}&= c_{31}^2+c_{23}^2\quad\rho_{31}= c_{12}^2+c_{23}^2\quad\rho_{23}= c_{12}^2+c_{31}^2\\
2|c|^2 &=\rho_{12} +\rho_{31}+\rho_{23}\qquad |X(c)|^2 = \fr{m_1m_2\rho_{12}^{2}+m_1m_3\rho_{31}^{2}+m_2m_3\rho_{23}^{2}}{m_1+m_2+m_3}
\end{aligned}
\end{equation}
We have
$$\tau = \fr{\rho_{12}\rho_{31}\rho_{23}}{8|c|^6}\qquad \fr{\tau}{\lambda} =  \fr{m|X(c)|^4}{64 m_1m_2m_3\,|c|^8}.$$
Here we have redefined $\rho_{ij}$ to eliminate the factors of $|c|$ and placed these factors elsewhere in the formulas.  The curvature term is
\begin{equation}
\label{eq_curvatureterm_spherical}
T_{curv} = \fr{2 \mu\tau}{|c| r^2} \, \gamma \times c.
 \end{equation}

\subsection{Visualizing the Regularized Shape Sphere - LeMaitre's Conformal Map}
The map of projective curves $f_{pr}:\bP(\cC)\into \bP(\cW)$, induced by the squaring map,  can be visualized as a map of the two-sphere into itself.  Indeed this is the point of view taken by Lemaitre in \cite{Lemaitre2}.

The map is a four-to-one branched covering map with octahedral symmetry (see figure~\ref{fig_lemaitre}).  The map is generically four-to-one except at the binary collision points where it is two-to-one.  In the figure, each octant of the regularized sphere maps to one or the other hemisphere of the unregularized sphere.  Thus, for example, the north pole of the unregularized sphere (representing a Lagrangian, equilateral central configuration) has four preimages which lie in alternate octants.  Each binary collision point on the equator of the unregularized shape sphere, has two preimages, which lie on a coordinate axes of the regularized sphere.

\begin{figure}[h]
\scalebox{0.7}{\includegraphics{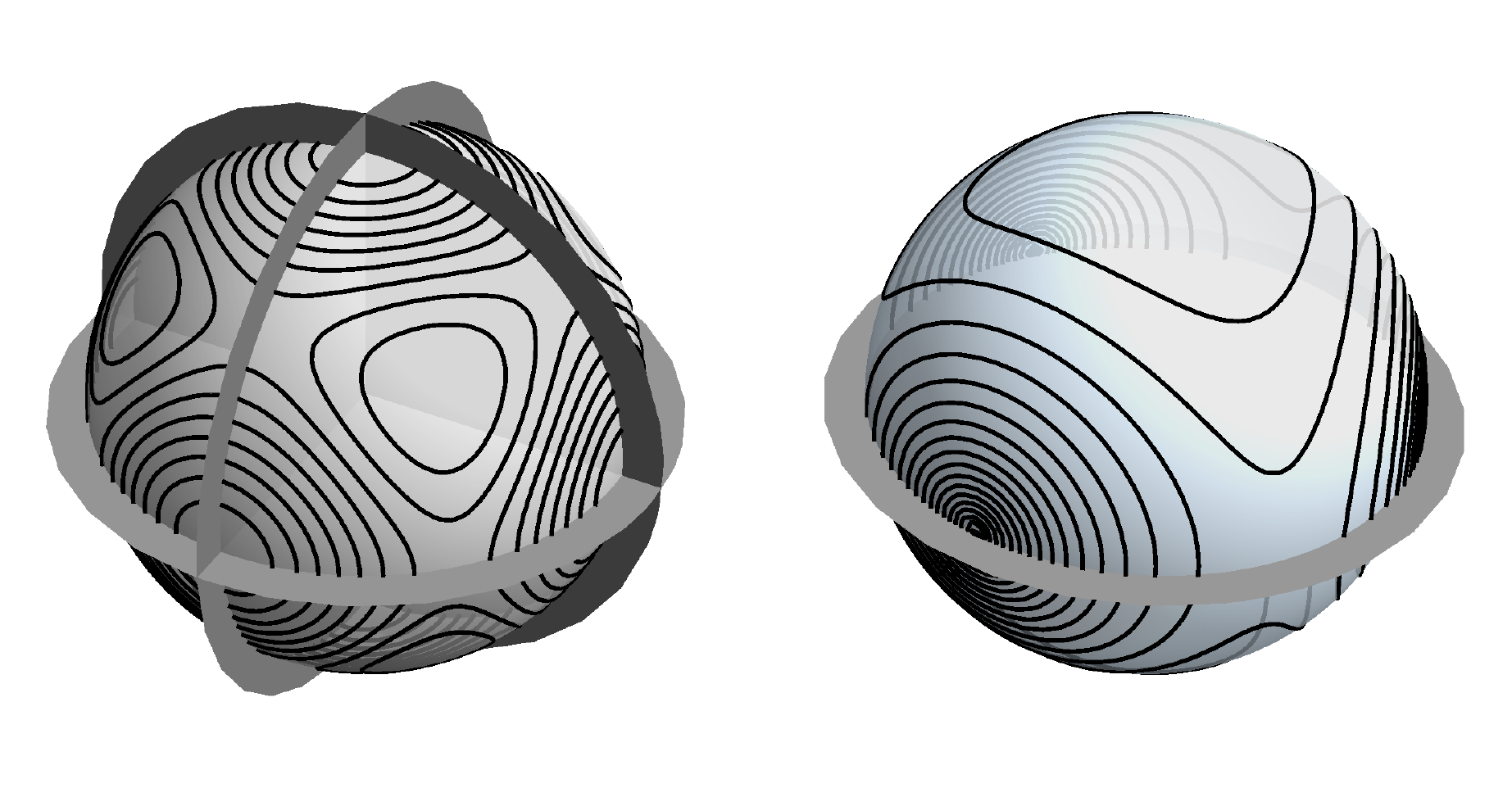}}
\caption{The regularizing map is a four-to-one branched cover of the two-sphere with octahedral symmetry.  Each octant of the regularized shape sphere (left) maps onto a hemisphere of the unregularized shape sphere (right).  The planes represent collinear configurations.  The figure also shows level curves of the unregularized shape potential and their preimages in the equal mass case.}  \label{fig_lemaitre}
\end{figure}

The three-dimensional sphere of figure~\ref{fig_regmap3D} is just the preimage of the regularized two-sphere sphere in figure~\ref{fig_lemaitre} under a Hopf-map.  Each point of the two-sphere determines a circle in the three-sphere.   The three large tori in figure~\ref{fig_regmap3D} are the preimages of the collinear circles in the two-sphere (where the coordinate planes cut the sphere).  The six tubes in figure~\ref{fig_regmap3D} are the preimages of small circles around the binary collision points (where the coordinate axes cut the sphere).

\section{Blowing Up Triple Collision} 
Our systematic use of the radial coordinate $r$ together with the  homogeneous coordinates used to describe the shape make it 
easy to implement McGehee's method for blowing-up total collision.  
We need only rescale    momenta    and change the timescale.  The changes 
can be made before or after reduction.   The changes  are non-canonical so 
destroy the Hamiltonian character of the equations.  We will describe the general method for the rotation-reduced and unreduced cases and then make some comments on the results of applying it to some of the Hamiltonians described above.

\subsection{Before Reduction} \label{subsec_beforereduction}
Consider a Hamiltonian of the general form
\begin{equation}\label{eq_generalH}
H(r,p_r,X,Y) =  \fr1{2r^2}B(X)(Y,Y) - \fr1r V(X) + [ \fr12 A(X)p_r^2 - C(X)]
\end{equation}
when expanded in powers of $r$.  This covers the rotation-unreduced Hamiltonian $H_{sph}$  of section~ \ref{sec_spherical} and the corresponding regularized Hamiltonians $\tilde H_{sph}(r,p_r,z,\eta)$ and $\tilde H_{sph}(r,p_r,x,y)$ of section~\ref{sec_sphericalregularization} (after changing the names of the variables).
For the unregularized Hamiltonian $H_{sph}$ we have 
$$A(X)=1\qquad C(X) = 0$$
 while for the regularized Hamiltonians $\tilde H_{sph}$ we have
$$A(X) = \tau(X)\qquad C(X) = h\,\tau(X).$$
The quantity $B(X)(Y,Y)$ represents the non-radial part of the kinetic energy.  It is a quadratic form in $Y$ 
which we  represent by a symmetric matrix
$B(X)$ depending on $X$. The dependence of $B$ on $X$ must  also   
be quadratic since $H$ must be 
 homogeneous of degree $0$ with respect to
the scaling $(X,Y) \mapsto (k X, \fr{1}{k} Y)$. 

Let $f(r)$ be a positive, real-valued function.  We will introduce a new timescale such that ${~}' = f(r)\dot{~}$.  The usual choice is McGehee's scaling factor $f_1(r) = r^{\fr32}$ but we will also consider $f_2(r) = \left(\fr{r}{r+1}\right)^{\fr32}$ which has better behavior for large $r$.  
(With the 1st choice solutions can reach  $r = \infty$ in finite time.)
For any such $f(r)$ we replace $(p_r,Y)$ by rescaled momentum variables 
\begin{equation}\label{eq_momentumscaling}
v = \fr{f(r)p_r}{r}\qquad \a = \fr{f(r)Y}{r^2}.
\end{equation}
The shape variable $X$ remains the same. 
When we make these substitutions of independent and dependent variables
 in the  Hamilton's differential equations resulting from  (\ref{eq_generalH}) we get
\begin{equation}\label{eq_blowup}
\begin{aligned}
r' &= A(X)vr\\
v'&=\fr12(1+r(\ln \nu)_r)A(X) v^2+B(X)(\a,\a) - \nu(r) V(X)\\
X'&= B(X)\a\\
\a'&=-\fr12(1-r(\ln \nu)_r)A(X)v\a-\fr12A_X\,v^2 - \fr12B_X(\a,\a) +\nu(r)V_X+r\nu(r)C_X
\end{aligned}
\end{equation}
where $\nu(r) = f(r)^2/r^3$ and where the subscripts denote differentiation.
 For McGehee's scaling $f(r) =f_1(r) =  r^{\fr32}$ we have $\nu(r) =1, (\ln \nu)_r = 0$ and the equations simplify considerably.  For $f_2(r)$ we have 
$\nu(r) = (1+r)^{-3}$  and both $\nu$ and $(\ln \nu)_r$ are still smooth all the way down to $r=0$.

Writing the energy equations $H_{sph}=h$ or $\tilde H_{sph} = 0$  in terms of the rescaled momenta gives
\begin{equation}\label{eq_blownupenergy}
\fr12 A(X) v^2+\fr12B(X)(\a,\a) -\nu(r)V(X) = r\nu(r) C(X).
\end{equation}

For example if use the $r^\fr32$ rescaling with $H_{sph}$, we have
$$A = 1\qquad B(X) = |X|^2B_0\qquad C = 0\qquad V(X) = |X|\sum_{i<j}\fr{m_i,m_j}{|X_{ij}|}.$$
where $B_0$ is the constant symmetric matrix (\ref{eq_Bmatrix}).
 We get the blown-up differential equations
$$\begin{aligned}
r' &= vr\\
v'&=\fr12 v^2-|X|^2B_0(\a,\a) + V(X)\\
X'&= |X|^2 B_0 \a\\
\a'&=-\fr12 v\a - B_0(\a,\a)X + V_X
\end{aligned}
$$
with the energy relation
$$\fr12 v^2+\fr12B_0(X)(\a,\a) -V(X) = rh.$$

The regularized equations arising from $\tilde H_{sph}$ are considerably more complicated due to the $B(X)$ terms (or rather the $B(z)$ or $B(x)$ terms.)  Instead of writing them explicitly we will just make some observations about them.  Consider, for example, $\tilde H_{sph}(r,p_r,x,y)$ from (\ref{eq_Htildesphxy}).  $B(x)$ will be a complicated, $4\times 4$ real matrix arising from the second term in (\ref{eq_Htildesphxy}).  The phase space before blow-up is 
$T^*\R^+\times T^*\C^2\simeq (0,\infty)\times\R\times\C^2\times\C^2$. In addition to the energy relation $\tilde H_{sph} = 0$, we have $\re\pairing{y,x} = 0$ and the scaling symmetry by positive real numbers so there is an induced flow on an quotient manifold of real dimension $7$.  After blow-up we have variables $(r,v,x,\a)\in [0,\infty)\times\R\times\C^2\times\C^2$, where we have extended the flow to the {\em collision manifold} where $r=0$, which is an invariant set for the differential equations.   We have a real-analytic vectorfield on this manifold-with-boundary.  Imposing the constraints and passing to the quotient under scaling gives a real-analytic vectorfield on an $7$-dimensional manifold-with-boundary representing the planar three-body problem on a fixed energy manifold, with all binary collisions regularized and with triple collision blown-up.  Note, in particular that the regularization of binary collisions passes smoothly to the boundary.

We claim that if the timescale factor $f(r) = f_2(r) = (r/(r+1))^{\fr32}$ is used, then the differential equations define a complete flow on  $[0,\infty)\times\R\times\C^2\times\C^2$ and hence the induced $7$-dimensional flow is complete.  Since the differential equations are smooth, the only obstruction to completeness would be orbits which become unbounded in finite time.  It is  well-known that, with the usual timescale, such orbits do not exist for the three-body problem.  It follows that if we use only bounded time-rescaling factors, the same will hold for the modified differential equations.  McGehee's factor $r^\fr32$ is unbounded and it is possible for orbits to escape in finite time.  Indeed, there are solutions of the three body problem for which $r(t) = O(t)$ as $t\into \infty$ with respect to the usual time-scale and these will reach infinity in finite rescaled time.  The factor $f_2$, while producing less elegant differential equations, eliminates this problem.

\subsection{After Reduction} \label{subsec_afterreduction}
The rotation-reduced Hamiltonians $H_\mu$ and their many regularized forms $\tilde H_\mu$ have the general form
\begin{equation}\label{eq_generalHmu}
H_{\mu} (r,p_r,X,Z) =  \fr1{2r^2} [B(X)(Z, Z) + A(X) \mu^2]    
 - \fr1r V(X) +  [\fr12 A(X) p_r^2 -  C(X)]
\end{equation}
(after changing the names of the variables).
The only new term  here,  when compared to the Hamiltonian of  section~\ref{subsec_beforereduction},
 is the  quadratic  term in the angular momentum $\mu$.
We have a momentum constraint $\pairing{Z,X} = 0$ and there will be a curvature term
$$T_{curv} = - \fr{2 \mu  b (X) }{r^2}  i Z$$
added to the $\dot Z$ equation.
As in section~\ref{subsec_beforereduction} the unregularized Hamiltonians $H_{\mu}$ we have 
$$A(X)=1\qquad C(X) = 0$$
 while for the regularized Hamiltonians $\tilde H_{\mu}$ we have
$$A(X) = \tau(X)\qquad C(X) = h\,\tau(X).$$
As in the last section, the variables $X, Z$  can denote either homogeneous coordinates
on the cotangent bundle of projective space,  before or after
Levi-Civita transformation, or they can be local
holomorphic coordinates on the cotangent bundle of
the shape sphere or of the regularized shape sphere
$\bP ({\mathcal C})$ (see the examples below).
Our computations immediately below hold for all these cases. 
 
We rescale time  and the  momenta as in equation (\ref{eq_momentumscaling}) with $Z$ replacing $Y$.
We  must also rescale angular momentum according to 
\begin{equation}\label{eq_angmomentumscaling}
  \tilde \mu = \fr{f(r)\mu}{r^2}.
\end{equation}
Then energy equations $H_{\mu}=h$ or $\tilde H_{\mu} = 0$  become
\begin{equation}\label{eq_blownupenergyreduced}
\fr12 A(X)(v^2+\tilde\mu^2)+\fr12B(X)(\a,\a) -\nu(r)V(X) = r\nu(r) C(X)
\end{equation}
where 
\begin{equation}
\label{eq_nu}
\nu = f^2/r^3 
\end{equation}
so that $\nu= 1$ for $f=r^\fr32$ and $\nu = (1+r)^{-3}$ for $f=f_2$.

In order to express the differential equations succinctly, let
 $$\tilde K = \fr12 A(X)(v^2+\tilde\mu^2)+\fr12B(X)(\a,\a)$$
 denote the blown-up kinetic energy and let
 \begin{equation}
\label{eq_phi}
 \phi(r)=-\fr12(1-r(\ln\nu)_r).
 \end{equation}
Then the equations of motion are:
  \begin{equation}
 \begin{aligned}
r' &= A(X)vr \\
v'&=  \phi(r)A(X) v^2 + 2\tilde K    - \nu(r)V\\
\tilde \mu' &= \phi(r)A(X)  v  \tilde \mu \\ 
X'&= B(X) \a \\
\a'&=\phi(r)A(X)  v\a-\tilde K_X     + \nu(r)V_X  +r\nu(r) C_X +T_{curv} \\
\end{aligned}
\end{equation}
where
$$T_{curv} = - 2 i \tilde\mu \alpha\text{ or } - 2 i \tilde\mu \tau(X)   \alpha$$
for the unregularized and regularized cases, respectively.
We remark that the $v'$ equation can also be written
$$v' = (\phi +1) A(X) v^2 + B(X)(\a,\a)+ A(X) \tilde\mu ^2  - \nu(r)V(X).$$
In these equations, we are regarding $\tilde\mu$ as a new variable subject, by definition, to the constraint 
\begin{equation}\label{eq_mutildeconstraint}
\sqrt{r}\,\tilde\mu = \sqrt{\nu(r)}\,\mu
\end{equation}
where $\mu$ is the old angular momentum constant.  This point of view is necessary to make the curvature term smooth at $r=0$.

As in section~\ref{subsec_beforereduction}, all functions of $r$ extend smoothly to $r= 0$.  If we start with one of the regularized Hamiltonians $\tilde H_\mu$ then for the resulting differential equations, all binary collisions have been regularized and the triple collision blown-up.   We obtain a flow on a manifold-with-boundary of dimension $5$ after fixing $\mu$, setting $\tilde H_{\mu} = 0$, imposing the constraint on $\tilde\mu$, the constraints that $X\in\cC$ and $\pairing{Z,X} = 0$ and passing to the quotient under complex scaling.  Binary collisions are regularized for all values of $\mu$ and if the time rescaling is done using $f_2(r)$, the flows on these manifolds will be complete.

It is well-known that  triple collisions are possible in the three-body problem only when $\mu = 0$.  In this case, equation (\ref{eq_mutildeconstraint}) shows that either
$\tilde\mu = 0$ or $r=0$.  Both of these submanifolds  are  invariant sets for the dynamical system.   The $5$-dimensional manifold-with-boundary with the above constraints and with $\tilde\mu=0$ represents the closure of zero-angular-momentum three-body problem.  The $4$-dimensional manifold where $\tilde\mu = r= 0$ forms the boundary. 
Even though orbit with $\mu\ne 0$ cannot have $\r\into 0$, the part of the collision manifold $\{r=0\}$ where $\tilde\mu \ne 0$ is relevant for studying low-angular momentum orbits passing close to triple collision \cite{MoeHeteroclinic,MoeChaotic}.

We will now present a couple of versions of the regularized, reduced and blown-up differential equations for the three-body problem.

\begin{example}[The blown-up regularized  affine  equations]
In section~\ref{sec_regularizedaffine} we used affine local coordinates on the regularized shape sphere to obtain a regularized Hamiltonian
$\tilde H(z,\z)$ with $6$-degrees of freedom.  (We wrote $z=x+i\,y, \z=\a+i\,\b$ in section~\ref{sec_regularizedaffine}.)
Comparing with the general form (\ref{eq_generalHmu}) we have
$$\begin{aligned}
A(X) &= \tau(z)\qquad B(X)(Z,Z)  = \fr{\tau}{\lambda}(1 +|z|^2)^2\,|\z|^2\\
C(X) &= h \tau(z)\qquad  V(X) = W(z).
\end{aligned}$$
For convenience, we recall from section~\ref{sec_regularizedaffine} that
\begin{equation*}
\begin{aligned}
\rho_{12}&=4|z|^2\quad \rho_{31}= |1+z^2|^2 \quad\rho_{23}= |1-z^2|^2\\
W(z) &= \fr{|X(z)|}{(1+|z|^2)^{6}}\left(m_1m_2\rho_{31}\rho_{23} + m_1m_3\rho_{12}\rho_{23} + m_2m_3\rho_{12}\rho_{31}\right) \\
|X(z)|^2 &= \fr{m_1m_2\rho_{12}^2+m_1m_3\rho_{31}^2+m_2m_3\rho_{23}^2}{m_1+m_2+m_3}\\
\tau &= \fr{\rho_{12}\rho_{31}\rho_{23}}{8(1+|z|^2)^{6}}\qquad \fr{\tau}{\lambda} =  \fr{m|X(z)|^4}{64 m_1m_2m_3\,(1+|z|^2)^{8}}.
\end{aligned}
\end{equation*}

As per the preceding subsection, we continue
to write the rescaled momentum variable as $\alpha$ (thus: $\alpha = (f/r^2) \zeta$) trusting that
there will be no confusing with the previous use of $\alpha$.
The rescaled kinetic energy satisfies
$$2\tilde K = \tau v^2 +  \tau \tilde \mu^2 + \fr{\tau}{\lambda}(1+|z|^2)^2 \, |\a|^2.$$

Then the regularized, blown-up equations read:
 \begin{equation}
 \begin{aligned}
r' &= \tau(z) vr \\
v'&=  \phi(r) \tau(z) v^2 + 2\tilde K    - \nu(r)W(z)\\
\tilde \mu' &= \phi (r) \tau (z) v  \tilde \mu \\
z'&= (1 + |z|^2)^2  \a \\
\a'&=\phi(r) \tau(z)   v\a- \tilde K_z + \nu (r) W_z  + r\nu(r) h \tau_z (z) - 2 i \tilde \mu \tau(z) \alpha \\
\end{aligned}
\end{equation}
The possibilities for $\nu(r), \phi(r)$ are described
in the previous subsection, in equations (\ref{eq_nu}, \ref{eq_phi}). 

We have $7$ variables, $(r,v,\tilde\mu,z,\a)\in [0,\infty)\times\R\times\R\times\C\times\C$.  The constraints are
$$\fr12 \tau(z)(v^2+\tilde\mu^2)+\fr12 \fr{\tau}{\lambda}(1 +|z|^2)^2\,|\a|^2  - \nu(r)W(z) = r\nu(r)\tau(z)h$$
and
$$\sqrt{r}\,\tilde\mu = \sqrt{\nu(r)}\,\mu.$$
\end{example}

\begin{example}[The blown-up regularized spherical equations]
In section~\ref{sec_regularizedshapesphereround} we used spherical-homogeneous variables $c=(c_1,c_2,c_3)$ to give a global representation of the regularized shape sphere.   We found a regularized Hamiltonian $\tilde H_\mu(r,c,p_r,\g)$. 
Comparing with the general form (\ref{eq_generalHmu}) we have
$$\begin{aligned}
A(X) &= \tau(c)\qquad B(X)(Z,Z)  =  2\fr{\tau}{\lambda}|c|^2 |\gamma|^2\\
C(X) &= h \tau(c)\qquad  V(X) = W(c).
\end{aligned}$$
We recall the formulas
\begin{equation*}
\begin{aligned}
\rho_{12}&= c_{31}^2+c_{23}^2\quad\rho_{31}= c_{12}^2+c_{23}^2\quad\rho_{23}= c_{12}^2+c_{31}^2\\
W(c) &= \fr{|X(c)|}{8|c|^6}\left(m_1m_2\rho_{31}\rho_{23} + m_1m_3\rho_{12}\rho_{23} + m_2m_3\rho_{12}\rho_{31}\right) \\
|X(c)|^2 &= \fr{m_1m_2\rho_{12}^{2}+m_1m_3\rho_{31}^{2}+m_2m_3\rho_{23}^{2}}{m_1+m_2+m_3}\\
\tau &= \fr{\rho_{12}\rho_{31}\rho_{23}}{8|c|^6}\qquad \fr{\tau}{\lambda} =  \fr{m|X(c)|^4}{64 m_1m_2m_3\,|c|^8}.
\end{aligned}
\end{equation*}
With $\alpha = (f/r^2) \gamma$, the rescaled kinetic energy satisfies
$$2\tilde K = \tau v^2 +  \tau \tilde \mu^2 + 2\fr{\tau}{\lambda}|c|^2 \, |\a|^2.$$

Then the regularized, blown-up equations read:
 \begin{equation}\label{eq_blownupspherical}
 \begin{aligned}
r' &= \tau(c) vr \\
v'&=  \phi(r) \tau(c) v^2 + 2\tilde K    - \nu(r)W(c)\\
\tilde \mu' &= \phi(r) \tau v  \tilde \mu \\
c'&=  2 \fr{\tau (c)}{\lambda (c)}|c|^2  \a \\
\a'&=\phi(r) \tau(c)   v\a- \tilde K_c + \nu W_c  + r\nu(r) h \tau_c (c)+ \fr{2  \tilde \mu \tau(c)}{|c|}\alpha \times c\\
\end{aligned}
\end{equation}

 We have $9$ variables, $(r,v,\tilde\mu,c,\a)\in [0,\infty)\times\R\times\R\times\R^3_0\times\R^3$.  However, $(c,\a)$ are homogeneous variables.  They satisfy $\pairing{\a,c} = 0$ and the equations are invariant under the scaling $(c, \alpha) \to (k c, \fr{1}{k} \a)$.   Taking this into account, we  have an induced system on
the $7$-dimensional quotient space $[0,\infty)\times\R\times\R\times T^*\S^2$.   
The energy and angular momentum constraints are
\begin{equation}\label{eq_blownupsphericalenergy}
\fr12 \tau(c)(v^2+\tilde\mu^2)+ \fr{\tau}{\lambda}|c|^2\,|\a|^2  - \nu(r)W(c) = r\nu(r)\tau(c)h
\end{equation}
and
$$\sqrt{r}\,\tilde\mu = \sqrt{\nu(r)}\,\mu$$
giving a subvariety of dimension $5$.

A nice alternative to the quotient construction is just to observe that  $\pairing{\a,c}=0$ implies that $|c|$ is invariant under the differential equations (\ref{eq_blownupspherical}).   Instead of quotienting by the scaling symmetry we can simply restrict $c$ to the unit sphere.   Let
$${\mathcal{M}}(h,\mu) = \{(r,v,\tilde\mu,c,\a): |c| = 1,\pairing{\a,c}=0, \tilde\mu = \sqrt{\fr{\nu(r)}{r}}\,\mu, (\ref{eq_blownupsphericalenergy})\text{ holds }\}.$$
Then ${\mathcal{M}}(h,\mu) $ is a $5$-dimensional submanifold (or subvariety when $\mu = 0$) of  $[0,\infty)\times\R\times\R\times\R^3_0\times\R^3$ which is invariant under (\ref{eq_blownupspherical}).  The flow on ${\mathcal{M}}(h,\mu) $ globally represents the planar three-body problem reduced by translations and rotations, with all binary collisions regularized and with triple collision blown-up.

\end{example}

\bibliographystyle{amsplain}

\end{document}